\newtheorem{definition}{Definition}[section]
\newtheorem{theorem}{Theorem}[section]
\newtheorem{remark}{Remark}[section]
\newtheorem{lemma}{Lemma}[section]
\newtheorem{assumption}{Assumption}[section]
\journal{Journal Differential Equations}
\begin{document}

\begin{frontmatter}



\title{Mean-square invariant manifolds for ill-posed stochastic evolution equations driven by nonlinear noise}


\author[label1]{Zonghao Li}
\author[label1]{Caibin Zeng\corref{cor1}}
\ead{macbzeng@scut.edu.edu.cn}
\author[label2]{Jianhua Huang}
\address[label1]{School of Mathematics, South China University of Technology, Guangzhou 510640, China}
\address[label2]{College of Science, National University of Defense Technology, Changsha 410073, China}
\cortext[cor1]{Corresponding author.}

\begin{abstract}
 This paper discerns the invariant manifold of a class of ill-posed stochastic evolution equations driven by a nonlinear multiplicative noise. To be more precise, we establish the existence of mean-square random unstable invariant manifold and only mean-square stable invariant set. Due to the lack of the Hille-Yosida condition, we construct a modified variation of constants formula by the resolvent operator. With the price of imposing an unusual condition involving a non-decreasing map, we set up the Lyapunov-Perron method and derive the required estimates. We also emphasize that the Lyapunov-Perron map in the foward time loses the invariant due to the adaptedness, we alternatively establish the existence of mean-square random stable sets.
\end{abstract}



\begin{keyword}


 Invariant manifold \sep ill-posed stochastic equation \sep Lyapunov-Perron method  \sep mean square random dynamical system.
\end{keyword}
\end{frontmatter}



\section{Introduction}
In this paper, we shall study the existence of mean-square random stable and unstable invariant manifolds of the following It\^{o} stochastic evolution equation defined in a separable Hilbert space $X$,
\begin{equation}\label{eq1.1}
	\left\{
	\begin{array}{l}
		du=Audt+F\left(u\right)dt+\sigma(u){dW},\\
		u\left(0\right)=u_0\in\overline{D\left(A\right)}=X_0. 	\\
	\end{array}\right.
\end{equation}  
where $A:D\left(A\right)\subset{X}\to{X}$ is a linear operator whose domain is non-densely defined. Indeed, $W$ is a two-sided cylindrical Wiener process on a complete filtered probability space $\left({\Omega}, \mathcal{F}, \{{\mathcal{F}_t}\}_{t\in\mathbb{R}}, \mathbb{P}\right)$. While $F$ and $\sigma$ are nonlinear functions with some natural imposed conditions (specified later). Note that the Cauchy problem to \eqref{eq1.1} is ill-posed since the Hille-Yosida theory for $C_0$-semigroup generated by $A$ breaks down. Cauchy problems with a non-dense domain cover several types of differential equations, including delay differential equations, age-structured models and some  evolution equations with nonlinear boundary conditions. To overcome the embarrassment, the integrated semigroup theory allows us to define a suitable mild solution, or so-called integrated solution. 
The concept of integrated semigroups was first introduced by Arendt \cite{WA1987,WA2001} and it was applied to study the existence and uniqueness of solutions to such non-homogeneous Cauchy problems in a deterministic setting by Da Prato and Sinestrari \cite{DS1987}. Later on, Thieme \cite{HS1990_,HS1990} established the well-posedness of non-autonomous and semilinear Cauchy problems under a prior Hille-Yosida type estimate for the resolvent of non-densely defined operators. By discarding the mentioned estimate, Magal and Ruan \cite{PM2007,PM_2009,PM2009} improved the above results and applied the obtained center manifold theorem to study Hopf bifurcation of age-structured models. Recently, Neam\c{t}u \cite{AN2020} extended the mentioned theory of integrated semigroups to ill-posed stochastic evolution equations \eqref{eq1.1} driven by linear white noise ($\sigma\left(u\right)=u$) and established the existence of random stable/unstable manifolds based on the Lyapunov-Perron method. Following this direction, Zeng and Shen \cite{ZS2021} obtained the existence of the invariant foliations of \eqref{eq1.1}. Moreover, Li and Zeng \cite{LZ2021} studied the center manifold issue of \eqref{eq1.1} with exponential trichotomy. Therefore, the mentioned work brings much interest to discern the long-time dynamics of \eqref{eq1.1}. 

On the other hand, the above-mentioned results are mainly about the pathwise random invariant manifolds of the equation. A pathwise random invariant manifolds is a set-valued random variable that is given by a graph of a Lipschitz map. To study such manifolds, we need to define pathwise random dynamical systems for the stochastic evolution equation. There are several results regarding invariant manifolds in the framework of random dynamical systems. Mohammed and Scheutzow \cite{Mh1999} studied the existence of local stable and unstable manifolds of stochastic differential equations driven by semimartingales. Duan \emph{et al.} \cite{Duan2004}, Lu and Schmalfuss \cite{KB2007} and Caraballo \emph{et al.} \cite{Cara2010} studied stable and unstable manifolds for stochastic partial differential equations. Lian and Lu \cite{LL2010} proved a multiplicative ergodic theorem and then use this theorem to establish the stable and unstable manifold theorem for nonuniformly hyperbolic random invariant sets. Li \emph{et al.} \cite{PJ2013} proved the persistence of smooth normally hyperbolic invariant manifolds for dynamical systems under random perturbations. As for center manifolds, Chen \emph{et al.} \cite{Chen2015} studied center manifolds for stochastic partial differential equations under an assumption of exponential trichotomy. Shi \cite{Shi2020} studied the limiting behavior of center manifolds for a class of singularly perturbed stochastic partial differential equations in terms of the phase spaces.

To our knowledge, the existence of pathwise random dynamical systems for stochastic partial differential equations is mainly restricted to additive white noise and linear multiplicative white noise. In this case, the key step is to transform the stochastic partial differential equations into random ones. For instance, a coordinate transform based on the stationary Ornstein-Uhlenbeck process was used to transform \eqref{eq1.1} driven by linear noise to a deterministic equation with random coefficients \cite{Cara2013,Duan2003,Duan2004}. Also, there were a few shots for the case of nonlinear multiplicative noise. To this aspect, Garrido-Atienza \emph{et al.} \cite{GALS2010} studied the local unstable manifold for stochastic PDEs with a fractional Brownian motion with $1/2<H<1$ using fractional calculus technique and constructing a stopping time sequence. By means of rough paths theory, Neam\c{t}u and Kuehn \cite{KN2021} established the existence and regularity of local center manifolds for rough differential equations. 

Our aim here is to develop a theory of mean-square random invariant manifolds of \eqref{eq1.1}  based on the so-called mean-square random dynamical system, presented by Lorenz and Kloeden \cite{LK2012}. Indeed, we adopt Wang's approach in \cite{BA2021} and integrated semigroup theory. Precisely, we will define a backward and forward stochastic equation, respectively,  and both of them involve the conditional expectation $\mathbb{E}\left(\cdot|\mathcal{F}_t\right)$ since every terms of them is required to be $\mathcal{F}_t$-adapted. Then we use the Lyapunov-Perron method to construct mean-square random unstable and stable invariant manifolds, respectively. By solving these two equations in the space of stochastic processes defined on backward time and forward time respectively and defining random sets of this solutions, we can prove the existence of the invariant manifold which is given by a graph of a Lipchitz map. But it turns out that the invariance of random set defined by solutions of forward one can not be proved. Alternatively, we establish the existence of mean-square random stable sets which is given by a graph of Lipschitz map which maps from a subspace instead of the whole space.

It also needs to emphasize that the standard variation of constants formula is not applicable due to the non-densely defined operator $A$. Even worse, Young's convolution inequality is not available such that the Gronwall-type lemma fails to deduce estimates of the solutions. For the former, we will construct a new variation of constants formula by the resolvent operator  connecting $\overline{D\left(A\right)}$ and $X$. While for the latter, we will impose an additional condition to complete the required estimates.

The rest of this paper is organized as follows. In Section \ref{sec2}, we collect some basic definitions of mean-square random dynamical systems and integrated semigroups and lay out the basic assumptions. Then in Section \ref{sec3} and Section \ref{sec4}, we prove the existence of mean-square random unstable invariant manifolds and stable invariant sets for \eqref{eq1.1}, respectively. Finally in Section \ref{sec5}, we summarize the conclusions obtained and discuss several possible extensions.  
\section{Preliminaries}\label{sec2}
\subsection{Mean-square random dynamical systems}
Suppose $V$ is a Banach space with norm ${\left\Vert\cdot\right\Vert}_V$. Let $\left({\Omega}, \mathcal{F}, \{{\mathcal{F}_t}\}_{t\in\mathbb{R}}, \mathbb{P}\right)$ be a complete filtered probability space satisfying the usual condition, that is, $\left\{{\mathcal{F}_t}\right\}_{t\in\mathbb{R}}$ is an increasing right continuous family of sub-$\sigma$-algebras of $\mathcal{F}$ that contains all $\mathbb{P}$-null sets. Given $p\in\left(1,+\infty\right)$ and $t\in\mathbb{R}$, denote by $L^p\left(\Omega, \mathcal{F}_t; V\right)$ the subspace of $L^p\left(\Omega, \mathcal{F}; V\right)$, which consists of all strongly $\mathcal{F}_t$-measurable functions $\psi$ in $L^p\left(\Omega, \mathcal{F}; V\right)$. We now present the definition of mean dynamical system over $\left({\Omega}, \mathcal{F}, \{{\mathcal{F}_t}\}_{t\in\mathbb{R}}, \mathbb{P}\right)$(see \cite{LK2012,BA2021}).
\begin{definition}
	A family $\Phi=\left\{\Phi\left(t,\tau\right):t\in\mathbb{R}^+, \tau\in\mathbb{R}\right\}$ of mappings is called a mean-square random dynamical system on $L^2\left(\Omega, \mathcal{F}; V\right)$ over the space $\left({\Omega}, \mathcal{F}, \{{\mathcal{F}_t}\}_{t\in\mathbb{R}}, \mathbb{P}\right)$ if $\forall$ $\tau\in\mathbb{R}$, $t,s\in\mathbb{R}^+$, it follows that
	\begin{itemize}
		\item [$\left(\mathrm{i}\right)$]$\Phi\left(t,\tau\right)$ maps $L^2\left(\Omega, \mathcal{F}_\tau; V\right)$ to $L^2\left(\Omega, \mathcal{F}_{t+\tau}; V\right)$;
		\item [$\left(\mathrm{ii}\right)$]$\Phi(0,\tau)$ is the identity operator on $L^2\left(\Omega, \mathcal{F}_\tau; V\right)$;
		\item [$\left(\mathrm{iii}\right)$]$\Phi(t+s,\tau)=\Phi(t,\tau+s)\circ\Phi(s,\tau)$.   
	\end{itemize}	  
\end{definition}     
Then we present the definition of the invariance property.
\begin{definition}\label{inv}
	Suppose $\Phi=\left\{\Phi\left(t,\tau\right):t\in\mathbb{R}^+, \tau\in\mathbb{R}\right\}$ is a mean-square random dynamical system on $L^2\left(\Omega, \mathcal{F}; V\right)$ over $\left({\Omega}, \mathcal{F}, \{{\mathcal{F}_t}\}_{t\in\mathbb{R}}, \mathbb{P}\right)$ and $\mathcal{M}=\{\mathcal{M}\left(\tau\right)\subseteq{L^2\left(\Omega,\mathcal{F}_\tau;V\right)}:\tau\in\mathbb{R}\}$ is a family of subsets of $L^2\left(\Omega,\mathcal{F}; V\right)$. Then $\mathcal{M}$ is called a mean-square random invariant set of $\Phi$ in $V$ if for all $t\in{\mathbb{R}}^+$,
	\begin{equation*}
		\Phi\left(t,\tau\right)\mathcal{M}\left(\tau\right)\subseteq{\mathcal{M}\left(\tau+t\right)}.
	\end{equation*}	   
\end{definition}

We list a part of basic assumptions here. Let $Y$ be another separable Hilbert space. The norm of $X$ is denoted by $\left\Vert\cdot\right\Vert$. Let $\mathcal{L}\left(Y,X\right)$ be the space of continuous linear operators from $Y$ to $X$. Given a symmetric and nonnegative operator $Q\in{\mathcal{L}\left(Y,X\right)}$, we write $Y_0=Q^{1/2}Y$. The space of Hilbert-Schmidt operators from $Y_0$ to $X$ is denoted by $\mathcal{L}_2\left(Y_0,X\right)$ with norm ${\left\Vert\cdot\right\Vert}_{\mathcal{L}_2\left(Y_0,X\right)}$. From now on we assume that $W$ is a two-sided $Y$-valued $Q$-Wiener process defined on $\left({\Omega}, \mathcal{F}, \{{\mathcal{F}_t}\}_{t\in\mathbb{R}}, \mathbb{P}\right)$. For simplicity, we denote $L^2\left(\Omega,\mathcal{F};X\right)$ and its norm as $L^2\left(\Omega,X\right)$ and ${\left\Vert\cdot\right\Vert}_{L^2\left(\Omega,X\right)}$, respectively.

\begin{assumption}\label{as1}
	Assume that $F:L^2\left(\Omega,X_0\right)\to{L^2\left(\Omega,X\right)}$ is globally Lipschitz continuous, i.e.
	\begin{equation}\label{eq1}
		{\left\Vert{F\left(u_1\right)-F\left(u_2\right)}\right\Vert}_{L^2\left(\Omega,X\right)}\leq{L_1}{\left\Vert{u_1-u_2}\right\Vert}_{L^2\left(\Omega,X_0\right)}.
	\end{equation}
	for all $u_1,u_2\in{L^2\left(\Omega,X_0\right)}$, where $L_1$ is the Lipschitz constant and $F\left(0\right)=0$. Thus
	\begin{equation}\label{eq_1}
		{\left\Vert{F\left(u\right)}\right\Vert}_{L^2\left(\Omega,X\right)}\leq{L_1}{\left\Vert{u}\right\Vert}_{L^2\left(\Omega,X_0\right)}. 
	\end{equation}
	for all $u\in{L^2\left(\Omega,X_0\right)}$.
	For the nonlinear diffusion term, we assume that $\sigma:L^2\left(\Omega,X_0\right)\to{L^2\left(\Omega,\mathcal{L}_2\left(Y_0,X\right)\right)}$ is globally Lipschitz continuous, i.e. 
	\begin{equation}\label{eq2}
		{\left\Vert{\sigma\left(u_1\right)-\sigma\left(u_2\right)}\right\Vert}_{L^2\left(\Omega,\mathcal{L}_2\left(Y_0,X\right)\right)}\leq{L_2}{\left\Vert{u_1-u_2}\right\Vert}_{L^2\left(\Omega,X_0\right)}.
	\end{equation}
	for all $u_1,u_2\in{L^2\left(\Omega,X_0\right)}$, where $L_2$ is the Lipschitz constant and $\sigma\left(0\right)=0$. Thus
	\begin{equation}\label{eq_2}
		{\left\Vert{\sigma\left(u\right)}\right\Vert}_{L^2\left(\Omega,\mathcal{L}_2\left(Y_0,X\right)\right)}\leq{L_2}{\left\Vert{u}\right\Vert}_{L^2\left(\Omega,X_0\right)}. 
	\end{equation}
	for all $u\in{L^2\left(\Omega,X_0\right)}$.
\end{assumption}
\subsection{Integrated semigroups}
In this subsection, we collect some basic results about non-densely defined operator and integrated semigroup from \cite{PM2007,PM2009,PM_2009,Magal2016}. The resolvent set of $A$ is denoted by $\rho\left(A\right)=\left\{\lambda\in\mathbb{C}:\lambda{I}-A \text{~is invertible}\right\}$ and we write $R_\lambda\left(A\right):=\left(\lambda{I}-A\right)^{-1}$. The spectrum of $A$ is denoted by $\sigma\left(A\right):=\mathbb{C}\backslash\rho\left(A\right)$. And we construct the part of $A$ denoted by $A_0:D\left(A_0\right)\subset{X_0}\to{X_0}$, which is a linear operator on $X_0$ defined by 
\begin{equation*}
	A_0x=Ax,\forall{x\in{D\left(A_0\right)}:=\left\{y\in{D\left(A\right):Ay\in{X_0}}\right\}}.	
\end{equation*}
Assume that there exists a constant $\vartheta$ satisfying $\left(\vartheta,+\infty\right)\subset\rho\left(A\right)$, which means $\rho\left(A\right)\neq\emptyset$. Then it follows from \cite[Lemma 2.1]{PM_2009} that $\rho\left(A\right)=\rho\left(A_0\right)$, from which we get $\sigma\left(A\right)=\sigma\left(A_0\right)$ and for each $\lambda>\vartheta$, 
\begin{equation*}
	D\left(A_0\right)=R_\lambda\left(A\right)X_0,  ~{R_\lambda\left(A_0\right)=R_\lambda\left(A\right)|}_{X_0}. 
\end{equation*} 

Recall that $A$ is said to be a Hille-Yosida operator if there exist two constants, $\vartheta\in\mathbb{R}$ and $M\geq1$, such that $\left(\vartheta,+\infty\right)\subset\rho\left(A\right)$ and
\begin{equation*}
	\left\Vert{\left(\lambda{I}-A\right)}^{-k}\right\Vert_{\mathcal{L}\left(X\right)}\leq\frac{M}{{\left(\lambda-\vartheta\right)}^{k}},~\forall\lambda>\vartheta,~\forall{k}\geq1.
\end{equation*} 
In the following, we assume $A$ satisfies some weaker conditions:
\begin{assumption}\label{as2.1}
	Let $\left(X,\left\Vert\cdot\right\Vert\right)$ be a Banach space and let $A:D\left(A\right)\subset{X}\to{X}$ be a linear operator. Assume that 
	\begin{itemize}
		\item [$\left(a\right)$] $A$ is a Hille-Yosida operator on $X_0$;
		\item[$\left(b\right)$]$\lim_{\lambda\to+\infty}R_\lambda\left(A\right)x=0,~\forall{x}\in{X}.$
	\end{itemize}
\end{assumption}
\begin{lemma}\cite[Lemma 2.1]{PM2007}\label{lem2.3}
	Let $\left(X,\left\Vert\cdot\right\Vert\right)$ be a Banach space and $A:D\left(A\right)\subset{X}\to{X}$ be a linear operator. Assume that there exists $\vartheta\in{\mathbb{R}}$, such that $\left(\vartheta,+\infty\right)\subset\rho\left(A\right)$ and 
	\begin{equation*}
		\limsup_{\lambda\to+\infty}\lambda{\left\Vert{R_\lambda\left(A\right)}\right\Vert}_{\mathcal{L}\left(X_0\right)}<+\infty.
	\end{equation*}
	Then the following assertions are equivalent:
	\begin{itemize}
		\item [$\left(\mathrm{i}\right)$] $\lim_{\lambda\to+\infty}\lambda{R_\lambda\left(A\right)}x=x,\forall{x\in{X_0}}$;
		\item [$\left(\mathrm{ii}\right)$] $\lim_{\lambda\to+\infty}{R_\lambda\left(A\right)}x=0,\forall{x\in{X}}$;
		\item [$\left(\mathrm{iii}\right)$] $\overline{D\left(A_0\right)}=X_0$.
	\end{itemize}
\end{lemma}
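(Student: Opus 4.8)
The plan is to prove the three statements equivalent through the cyclic chain $(\mathrm{iii})\Rightarrow(\mathrm{i})\Rightarrow(\mathrm{ii})\Rightarrow(\mathrm{iii})$. Throughout I will use the facts recorded just before the lemma, namely that $R_\lambda(A)|_{X_0}=R_\lambda(A_0)$ and $D(A_0)=R_\lambda(A)X_0$ for $\lambda>\vartheta$, together with the elementary observations that $R_\lambda(A)$ maps $X$ into $D(A)\subseteq\overline{D(A)}=X_0$ and that the standing hypothesis $\limsup_{\lambda\to+\infty}\lambda\|R_\lambda(A)\|_{\mathcal{L}(X_0)}<+\infty$ furnishes a constant $C>0$ and a threshold $\lambda_0>\vartheta$ with $\|\lambda R_\lambda(A_0)\|_{\mathcal{L}(X_0)}\le C$ for all $\lambda\ge\lambda_0$. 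This uniform bound is what lets me upgrade convergence on dense subsets to convergence on all of $X_0$.

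For $(\mathrm{iii})\Rightarrow(\mathrm{i})$ I would first check convergence on the domain: for $w\in D(A_0)$ the identity $\lambda R_\lambda(A_0)w-w=R_\lambda(A_0)A_0w$ gives $\|\lambda R_\lambda(A_0)w-w\|\le (C/\lambda)\|A_0w\|\to0$. Assuming $(\mathrm{iii})$, i.e. $D(A_0)$ is dense in $X_0$, a standard three-$\varepsilon$ argument combining this domain convergence with the uniform bound $\|\lambda R_\lambda(A_0)\|_{\mathcal{L}(X_0)}\le C$ extends the convergence $\lambda R_\lambda(A)x\to x$ to every $x\in X_0$, which is $(\mathrm{i})$.

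For $(\mathrm{i})\Rightarrow(\mathrm{ii})$, fix $x\in X$ and $\mu>\vartheta$, and set $y:=R_\mu(A)x\in D(A)\subseteq X_0$. Writing $x=(\mu I-A)y=(\mu-\lambda)y+(\lambda I-A)y$ and applying $R_\lambda(A)$ yields $R_\lambda(A)x=y-(\lambda-\mu)R_\lambda(A)y=y-\lambda R_\lambda(A)y+\mu R_\lambda(A)y$. Since $y\in X_0$, hypothesis $(\mathrm{i})$ gives $\lambda R_\lambda(A)y\to y$, whence $\mu R_\lambda(A)y=(\mu/\lambda)\,\lambda R_\lambda(A)y\to0$; therefore $R_\lambda(A)x\to y-y+0=0$, which is $(\mathrm{ii})$.

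The step I expect to carry the real content is $(\mathrm{ii})\Rightarrow(\mathrm{iii})$. The point is that on $X_0$ the conclusion of $(\mathrm{ii})$ is automatic from the uniform bound, since $\|R_\lambda(A)x\|\le (C/\lambda)\|x\|$ for $x\in X_0$; thus $(\mathrm{ii})$ only carries information at points of $X\setminus X_0$, and the trick is to feed it exactly such points. Concretely, for $x\in D(A)$ the identity $\lambda R_\lambda(A)x-x=R_\lambda(A)(Ax)$ together with $(\mathrm{ii})$ applied to $Ax\in X$ gives $\lambda R_\lambda(A)x\to x$; since $x\in D(A)\subseteq X_0$ forces $\lambda R_\lambda(A)x\in D(A_0)$, we conclude $x\in\overline{D(A_0)}$. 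Hence $D(A)\subseteq\overline{D(A_0)}$, and taking closures gives $X_0=\overline{D(A)}\subseteq\overline{D(A_0)}\subseteq X_0$, i.e. $(\mathrm{iii})$. The only subtlety to watch is that $Ax$ genuinely ranges outside $X_0$, so that $(\mathrm{ii})$ is being used with full strength rather than collapsing to the free bound available on $X_0$.
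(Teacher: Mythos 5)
The paper does not prove this lemma at all---it is quoted verbatim from Magal--Ruan \cite[Lemma 2.1]{PM2007}, so there is no internal proof to compare against. Your cyclic argument $(\mathrm{iii})\Rightarrow(\mathrm{i})\Rightarrow(\mathrm{ii})\Rightarrow(\mathrm{iii})$ is correct and is essentially the standard proof found in that reference: the identity $\lambda R_\lambda(A)x-x=R_\lambda(A)Ax$ on $D(A)$ (resp.\ $D(A_0)$), the resolvent identity for $(\mathrm{i})\Rightarrow(\mathrm{ii})$, and density plus the uniform bound on $\lambda R_\lambda(A_0)$ for the three-$\varepsilon$ step---including the correct observation that $(\mathrm{ii})$ carries real content only at points $Ax\notin X_0$.
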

\begin{lemma}\cite[Theorem 5.3]{APazy1983}\label{lem2.4}
	A linear operator $A:D\left(A\right)\subset{X}\to{X}$ is an infinitesimal generator of a $C_0$-semigroup ${\left(T\left(t\right)\right)}_{t\geq0}$ satisfying ${\left\Vert{T\left(t\right)}\right\Vert}_{\mathcal{L}\left(X\right)}\leq{Me^{\vartheta{t}}}$, if and only if
	\begin{itemize}
		\item [$\left(\mathrm{i}\right)$] $A$ is densely defined in $X$;
		\item [$\left(\mathrm{ii}\right)$] $A$ is a Hille-Yosida operator.
	\end{itemize}
\end{lemma}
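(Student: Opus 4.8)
The plan is to prove the two implications of this classical Hille--Yosida generation theorem separately, and the whole argument hinges on the density hypothesis (i), which is precisely what the subsequent theory of integrated semigroups must dispense with. For the necessity direction, suppose $A$ generates a $C_0$-semigroup $(T(t))_{t\geq 0}$ with $\|T(t)\|_{\mathcal{L}(X)}\leq Me^{\vartheta t}$. Density of $D(A)$ follows from the fact that the averages $\frac{1}{h}\int_0^h T(s)x\,ds$ lie in $D(A)$ and converge to $x$ as $h\to 0^+$ by strong continuity. For the resolvent bound, I would represent the resolvent as the Laplace transform $R_\lambda(A)x=\int_0^\infty e^{-\lambda s}T(s)x\,ds$ for $\lambda>\vartheta$ (the integral converges absolutely by the growth bound), differentiate $k-1$ times in $\lambda$ to get $R_\lambda(A)^k x=\frac{1}{(k-1)!}\int_0^\infty s^{k-1}e^{-\lambda s}T(s)x\,ds$, and take norms under the integral with $\|T(s)\|\leq Me^{\vartheta s}$ to obtain exactly $\|R_\lambda(A)^k\|_{\mathcal{L}(X)}\leq M/(\lambda-\vartheta)^k$, so that $A$ is a Hille--Yosida operator.

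For the sufficiency, assume (i) and (ii). The idea is to regularize $A$ by the bounded Yosida approximations $A_\lambda:=\lambda A R_\lambda(A)=\lambda^2 R_\lambda(A)-\lambda I$ and to define $T_\lambda(t):=e^{tA_\lambda}$ by the exponential series. First I would establish that $\lambda R_\lambda(A)x\to x$ for every $x\in X$: this is proved first on $D(A)$ via the identity $\lambda R_\lambda(A)x-x=R_\lambda(A)Ax$ together with $\|R_\lambda(A)\|\leq M/(\lambda-\vartheta)$, and then extended to all of $X$ using density together with the uniform boundedness of $\lambda R_\lambda(A)$; consequently $A_\lambda x\to Ax$ for $x\in D(A)$. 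Writing $e^{tA_\lambda}=e^{-\lambda t}e^{t\lambda^2 R_\lambda(A)}$ and expanding the exponential against the Hille--Yosida bound yields the crucial uniform estimate $\|T_\lambda(t)\|_{\mathcal{L}(X)}\leq M\exp\!\left(\frac{\lambda\vartheta}{\lambda-\vartheta}\,t\right)$, whose exponent tends to $\vartheta t$ as $\lambda\to\infty$.

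The heart of the argument, and the step I expect to be the main obstacle, is to show that $T_\lambda(t)x$ converges as $\lambda\to\infty$, uniformly on compact time intervals, and that the limit defines a semigroup whose generator is exactly $A$. For the convergence I would use that $A_\lambda$, $A_\mu$, $T_\lambda$, $T_\mu$ all commute and the telescoping identity $T_\lambda(t)x-T_\mu(t)x=\int_0^t T_\mu(t-s)T_\lambda(s)(A_\lambda-A_\mu)x\,ds$, which combined with the uniform bound above and $A_\lambda x\to Ax$ shows that $(T_\lambda(t)x)$ is Cauchy for $x\in D(A)$, and then for all $x\in X$ by density. Defining $T(t)x:=\lim_\lambda T_\lambda(t)x$, the semigroup law, strong continuity, and the bound $\|T(t)\|_{\mathcal{L}(X)}\leq Me^{\vartheta t}$ all pass to the limit. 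Finally, to identify the generator $B$ of $T(t)$ with $A$, I would pass to the limit in $T_\lambda(t)x-x=\int_0^t T_\lambda(s)A_\lambda x\,ds$ to obtain $T(t)x-x=\int_0^t T(s)Ax\,ds$ for $x\in D(A)$, which shows $A\subseteq B$; since both $\lambda I-A$ and $\lambda I-B$ map their domains bijectively onto $X$ for large $\lambda$, this inclusion forces $A=B$. It is precisely in the extension-by-density steps that hypothesis (i) is indispensable, which is the structural reason that the non-densely defined setting of \eqref{eq1.1} requires the integrated-semigroup framework rather than this theorem.
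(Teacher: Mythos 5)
Your proof is correct, but it is worth noting that the paper does not prove this statement at all: Lemma~\ref{lem2.4} is quoted as a classical result, with the proof delegated to Pazy \cite{APazy1983}, since it only serves as background for passing from Assumption~\ref{as2.1} to the semigroup $T(t)=e^{A_0 t}$ generated by the part $A_0$. So the comparison is between your argument and the textbook proof being cited. Your route is the direct Yosida-approximation argument carried out in the general $(M,\vartheta)$ setting: the key point, which you handle correctly, is that the estimate $\left\Vert e^{t\lambda^2 R_\lambda(A)}\right\Vert\leq M e^{t\lambda^2/(\lambda-\vartheta)}$ requires the Hille--Yosida bound on \emph{all} powers $\left\Vert R_\lambda(A)^n\right\Vert\leq M/(\lambda-\vartheta)^n$, not just on the resolvent itself, and this is exactly what the hypothesis supplies. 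Pazy's own proof of the cited theorem takes a different path: it first rescales by $e^{-\vartheta t}$ and then introduces an equivalent norm $|x|=\sup_{\mu>\vartheta,\,n\geq0}\left\Vert(\mu-\vartheta)^n R_\mu(A)^n x\right\Vert$ under which the shifted operator satisfies the contraction-case ($M=1$) Hille--Yosida condition, thereby reducing everything to the contraction theorem. The renorming route buys a cleaner reduction and isolates where the constant $M$ lives; your direct route avoids introducing an auxiliary norm at the cost of tracking $M$ and the exponent $\lambda\vartheta/(\lambda-\vartheta)$ through the approximation, and the telescoping identity plus the generator identification via bijectivity of $\lambda I-A$ and $\lambda I-B$ are exactly right. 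One small expository gap in your necessity step: before differentiating the Laplace transform you should first verify that $R(\lambda)x:=\int_0^\infty e^{-\lambda s}T(s)x\,ds$ is indeed a two-sided inverse of $\lambda I-A$, which is what establishes $(\vartheta,+\infty)\subset\rho(A)$ in the first place; this is routine but is logically prior to the norm estimate. Your closing observation, that the density hypothesis is precisely what fails in the setting of \eqref{eq1.1} and forces the integrated-semigroup machinery of Lemma~\ref{lem2.6}, matches the role this lemma plays in the paper.
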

Then according to Assumption \ref{as2.1} and Lemma \ref{lem2.4}, we infer that $A_0$ generates a $C_0$-semigroup on $X_0$ denoted by ${\left(T\left(t\right)\right)}_{t\geq0}$, which  can also be denoted by $\left(e^{A_0t}\right)_{t\geq0}$. 
Now we give the definition of integrated semigroups.
\begin{definition}
	Let $\left(X,\left\Vert\cdot\right\Vert\right)$ be a Banach space. A family of bounded linear operators $\{S\left(t\right)\}_{t\geq0}$ on $X$ is called an integrated semigroup if
	\begin{itemize}
		\item[$\left(\mathrm{i}\right)$]$S\left(0\right)=0$;
		\item[$\left(\mathrm{ii}\right)$]the map $t\to{S\left(t\right)}x$ is continuous on $[0,+\infty)$ for each $x\in{X}$;
		\item[$\left(\mathrm{iii}\right)$]$S\left(t\right)$ satisfies
		\begin{equation*}
			S\left(s\right)S\left(t\right)=\int_{0}^{s}\left(S\left(r+t\right)-S\left(r\right)\right)dr, \forall{s,t\geq0}.
		\end{equation*}
	\end{itemize}
\end{definition}
An integrated semigroup $\{S\left(t\right)\}_{t\geq0}$ is said to be non-degenerate if whenever $S\left(t\right)x=0$, $\forall{t\geq0}$, then $x=0$. According to Thieme \cite{HS1990_}, we say that a linear operator $A:D\left(A\right)\subset{X}\to{X}$ is the generator of a non-degenerate integrated semigroup $\{S\left(t\right)\}_{t\geq0}$ on $X$ if and only if 
\begin{equation*}
	x\in{D\left(A\right)},~y=Ax\Leftrightarrow{S\left(t\right)x-tx=\int_0^t{S\left(s\right)yds}}, ~\forall{t\geq0}.
\end{equation*}
\begin{lemma}\cite[Proposition 2.5]{PM2007}\label{lem2.6}
	Let Assumption \ref{as2.1} be satisfied. Then $A$ generate a uniquely determined  non-degenerate integrated semigroup $\{S\left(t\right)\}_{t\geq0}$. Moreover, for each $x\in{X},t\geq0$ and each $\mu>\vartheta$, $S\left(t\right)x$ is given by 
	\begin{equation}
		S\left(t\right)x=\mu\int_0^tT\left(s\right){R_\mu\left(A\right)}ds+\left(I-T\left(t\right)\right){R_\mu\left(A\right)}x.
	\end{equation}
	Also, the map $t\to{S\left(t\right)}x$ is continuously differentiable if and only if $x\in{X_0}$ and 
	\begin{equation*}
		\frac{dS\left(t\right)x}{dt}=T\left(t\right)x, ~\forall{t\geq0},\ \forall{x\in{X_0}}. 
	\end{equation*}
\end{lemma}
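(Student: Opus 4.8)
The plan is to regard the displayed expression as the \emph{definition} of a candidate family $\{S(t)\}_{t\geq 0}$ and then to verify, one property at a time, that it is a non-degenerate integrated semigroup whose generator is precisely $A$. First I would note that Assumption \ref{as2.1} places us exactly in the setting of Lemmas \ref{lem2.3} and \ref{lem2.4}: part $(a)$ asserts that $A$ is a Hille--Yosida operator on $X_0$, while part $(b)$ combined with the equivalence $(\mathrm{ii})\Leftrightarrow(\mathrm{iii})$ of Lemma \ref{lem2.3} gives $\overline{D(A_0)}=X_0$, so that $A_0$ is densely defined and Hille--Yosida on $X_0$. Lemma \ref{lem2.4} then furnishes a $C_0$-semigroup $(T(t))_{t\geq 0}$ on $X_0$ with ${\left\Vert T(t)\right\Vert}_{\mathcal{L}(X_0)}\leq M e^{\vartheta t}$, which is exactly what the right-hand side of the formula needs. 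Since $R_\mu(A)x\in D(A)\subseteq\overline{D(A)}=X_0$ for every $x\in X$ and $\mu>\vartheta$, the orbit $T(s)R_\mu(A)x$ is well defined, the Bochner integral converges, and each $S(t)$ is a bounded linear operator on $X$; moreover $S(0)x=(I-T(0))R_\mu(A)x=0$, and $t\mapsto S(t)x$ is continuous by the strong continuity of $T$.

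Next I would check the algebraic defining relation $S(s)S(t)=\int_0^s\big(S(r+t)-S(r)\big)\,dr$. Inserting the formula and using the semigroup law $T(a)T(b)=T(a+b)$ together with the identity $A_0\int_0^\tau T(s)y\,ds=(T(\tau)-I)y$ for $y\in X_0$ collapses both sides to the same expression; this is bookkeeping rather than a genuine difficulty. The core of the argument is the generator characterization, namely that $x\in D(A)$ with $y=Ax$ is equivalent to $S(t)x-tx=\int_0^t S(r)y\,dr$ for all $t\geq 0$. Here I would compute $S(t)x-tx$ from the formula and reduce it to $\int_0^t S(r)\,dr$ applied to $Ax$ by means of the resolvent identity $AR_\mu(A)=\mu R_\mu(A)-I$ and its restriction $A_0R_\mu(A_0)=\mu R_\mu(A_0)-I$ on $X_0$; because $D(A)$ is not dense in $X$, one must track carefully which operators act on all of $X$ and which only on $X_0$, and I expect this to be the main obstacle. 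Non-degeneracy and uniqueness then come cheaply: if $S(\cdot)x\equiv 0$ then the differentiable term $\mu\int_0^t T(s)R_\mu(A)x\,ds$ must cancel $(T(t)-I)R_\mu(A)x$, which forces $R_\mu(A)x\in D(A_0)$, hence $x\in X_0$, and then $\int_0^t T(s)x\,ds\equiv 0$ gives $x=0$; uniqueness holds because a non-degenerate integrated semigroup is determined by its generator through the very relation just verified.

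Finally, for the differentiability assertion I would argue directly from the formula, the regularity of $t\mapsto S(t)x$ being governed entirely by that of the orbit $t\mapsto T(t)R_\mu(A)x$. If $x\in X_0$ then $R_\mu(A)x=R_\mu(A_0)x\in D(A_0)$, this orbit is continuously differentiable with derivative $T(t)A_0R_\mu(A_0)x=T(t)\big(\mu R_\mu(A_0)x-x\big)$, and substituting cancels the two $\mu T(t)R_\mu(A)x$ contributions to leave $\tfrac{d}{dt}S(t)x=T(t)x$. Conversely, if $t\mapsto S(t)x$ is continuously differentiable then so is $t\mapsto T(t)R_\mu(A)x$, which places $R_\mu(A)x$ in $D(A_0)$; since $A_0R_\mu(A)x=AR_\mu(A)x=\mu R_\mu(A)x-x$ with the left-hand side and $\mu R_\mu(A)x$ both lying in $X_0$, we conclude $x\in X_0$. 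This closes the equivalence and completes the proof.
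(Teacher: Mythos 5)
You should first be aware that the paper contains no proof of this statement at all: Lemma~\ref{lem2.6} is imported verbatim from \cite{PM2007}, so your argument has to stand on its own rather than be compared with anything in the text. Your route --- take the displayed expression as the \emph{definition} of $S(t)$ (for one fixed $\mu>\vartheta$) and verify Thieme's defining properties directly --- is a legitimate, more elementary alternative to the way this is usually done in that literature, namely computing the Laplace transform $\lambda\int_0^\infty e^{-\lambda t}S(t)x\,dt=R_\lambda(A)x$ and invoking the Arendt correspondence between such pseudo-resolvents and integrated semigroups. Most of what you describe checks out: Assumption~\ref{as2.1} together with Lemma~\ref{lem2.3} $\left((\mathrm{ii})\Leftrightarrow(\mathrm{iii})\right)$ and Lemma~\ref{lem2.4} does produce the $C_0$-semigroup $T$ on $X_0$; the identity $(T(t)-I)y=A_0\int_0^t T(s)y\,ds$ gives both $S(t)x\in X_0$ and the simplification $S(t)z=\int_0^t T(s)z\,ds$ for $z\in X_0$, from which the composition law follows because $S(r+t)x-S(r)x=T(r)S(t)x$; your non-degeneracy argument and both directions of the differentiability statement are correct as written. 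What the direct approach buys is self-containedness; what it costs is that the generator identity must be verified by hand, and that is where your proposal is genuinely incomplete.

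The gap is this: in the definition the paper adopts from Thieme, ``$A$ generates $S$'' means the \emph{equivalence} $x\in D(A),\,y=Ax\Leftrightarrow S(t)x-tx=\int_0^t S(r)y\,dr$ for all $t\geq0$. The computation you outline --- reduce $S(t)x-tx$ to $\int_0^t S(r)(Ax)\,dr$ via $AR_\mu(A)=\mu R_\mu(A)-I$ --- proves only the forward implication, i.e.\ that the operator generated by $S$ \emph{extends} $A$; it does not rule out that the relation holds for pairs $(x,y)$ outside the graph of $A$, in which case $A$ would not be the generator. The converse needs a separate argument, which your tools do support: if the relation holds, then $tx=S(t)x-\int_0^t S(r)y\,dr\in X_0$, so $x\in X_0$ and the relation reads $\int_0^t T(s)x\,ds-tx=\int_0^t S(r)y\,dr$; both sides are $C^1$ in $t$, so $T(t)x-x=S(t)y$; applying $R_\mu(A)$, using that $R_\mu(A)|_{X_0}=R_\mu(A_0)$ commutes with $T$, gives $T(t)R_\mu(A)x-R_\mu(A)x=\int_0^t T(s)R_\mu(A)y\,ds$, whence $R_\mu(A)x\in D(A_0)$ and $A_0R_\mu(A)x=R_\mu(A)y$ by the generator property on $X_0$; combined with $AR_\mu(A)x=\mu R_\mu(A)x-x$ this yields $x=R_\mu(A)(\mu x-y)\in D(A)$ and $Ax=y$. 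Finally, your uniqueness step is outsourced: the fact that a non-degenerate integrated semigroup is determined by its generator is indeed classical (Arendt, Thieme \cite{WA1987,HS1990_}), but it does not follow from ``the very relation just verified'' without an argument, and note that it is precisely this uniqueness that makes your construction independent of the chosen $\mu$ --- a point the lemma asserts (the formula holds for \emph{each} $\mu>\vartheta$) and which your definition-first approach silently relies on.
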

From Lemma \ref{lem2.6}, we can deduce that for $\mu>\vartheta$, $S\left(t\right)$ commutes with ${\left(\mu{I}-A\right)}^{-1}$ and 
\begin{equation*}
	S\left(t\right)x=\int_0^tT\left(s\right)xds, ~\forall{x\in{X_0}}.
\end{equation*}
Hence, for each $x\in{X_0},~t\geq0$ and each $\mu>\vartheta$, 
\begin{equation}\label{eq2.2}
	{R_\mu\left(A\right)}S\left(t\right)x=S\left(t\right){R_\mu\left(A\right)}x=\int_0^tT\left(s\right){R_\mu\left(A\right)}xds.
\end{equation}
In addition, by Arendt \cite[Lemma 3.2.2 b),d)]{WA2001}, $S\left(t\right)x\in{X_0}$ for $x\in{X}$. Thus by \eqref{eq2.2} and Lemma \ref{lem2.3}(i), we have for each $x\in{X_0},~t\geq0$ and each $\mu>\vartheta$,
\begin{equation*}
	S\left(t\right)x=\lim_{\mu\to+\infty}\int_0^t{T\left(s\right)}\mu{R_\mu\left(A\right)}xds.
\end{equation*}
\subsection{Integrated solutions}
We need to find the integrated solution(or mild solution) of \eqref{eq1.1}. Recall the non-homongeneous Cauchy problem discussed in \cite{WA2001,PM2007,PM2009,PM_2009,Magal2016}
\begin{equation}\label{eq2.3}
	\frac{du}{dt}=Au\left(t\right)+f\left(t\right),~t\geq0,~u\left(0\right)=x\in{X_0}.
\end{equation}
with $f\in{L^1\left(\left(0,\tau_0\right),X\right)}$. If $A$ is a generator of a $C_0$-semigroup ${\left(T\left(t\right)\right)}_{t\geq0}$, then the variation of constants formula indicates that \eqref{eq2.3} has a unique mild solution $u\left(t\right)$ given by
\begin{equation*}
	u\left(t\right)=T\left(t\right)x+\int_s^t{T\left(t-s\right)}f\left(s\right)ds.
\end{equation*}
Now we consider \eqref{eq2.3} under the assumptions we set up for $A$ in this article.   
\begin{lemma}\cite[Lemma 2.6]{PM2007}\label{lem2.7}
	Let Assumption \ref{as2.1} be satisfied and let $\tau_0>0$ be fixed. Denote for each $f\in{C^1\left(\left[0,\tau_0\right],X\right)}$, 
	\begin{equation*}
		\left(S*f\right)\left(t\right)=\int_0^t{S\left(s\right)f\left(t-s\right)}ds,~\forall{t}\in\left[0,\tau_0\right].
	\end{equation*}
	Then we have the following:
	\begin{itemize}
		\item[$\left(\mathrm{i}\right)$]The map $t\to{\left(S*f\right)\left(t\right)}$ is continuously differentiable on $\left[0,\tau_0\right]$;
		\item[$\left(\mathrm{ii}\right)$]$\left(S*f\right)\left(t\right)\in{D\left(A\right)},~\forall{t\in\left[0,\tau_0\right]}$;
		\item[$\left(\mathrm{iii}\right)$]if we set $u\left(t\right)=\frac{d}{dt}\left(S*f\right)\left(t\right)$, then
		\begin{equation*}
			u\left(t\right)=A\int_0^t{u\left(s\right)}ds+\int_0^t{f\left(s\right)}ds, ~\forall{t\in\left[0,\tau_0\right]}.
		\end{equation*}
		\item[$\left(\mathrm{iv}\right)$]For each $\lambda>\vartheta$, and each $t\in\left[0,\tau_0\right]$, we have
		\begin{equation*}
			{R_\lambda\left(A\right)}\frac{d}{dt}\left(S*f\right)\left(t\right)=\int_0^tT\left(t-s\right){R_\lambda\left(A\right)}f\left(s\right)ds.
		\end{equation*}
	\end{itemize}
\end{lemma}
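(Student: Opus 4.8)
The plan is to exploit the one genuine difficulty here: since $A$ is not densely defined, the integrated semigroup $S(t)$ is differentiable only on the smaller space $X_0$ (Lemma \ref{lem2.6}), whereas $f$ takes values in the larger space $X$, so neither the classical variation-of-constants formula nor a termwise application of $A$ is available. The device that resolves this is the resolvent $R_\lambda(A)$, which maps $X$ into $D(A)\subset X_0$ and commutes with $S(t)$; it lets me transfer every computation to the well-behaved $C_0$-semigroup $T(t)$ on $X_0$ and then pull the conclusions back. For $(\mathrm{i})$ I would place the $t$-derivative on the $C^1$ factor $f$ rather than on $S$. Differentiating $(S*f)(t)=\int_0^t S(s)f(t-s)\,ds$ under the integral sign (legitimate since $f\in C^1$ and $s\mapsto S(s)$ is strongly continuous with $S(0)=0$) gives $u(t):=\frac{d}{dt}(S*f)(t)=S(t)f(0)+\int_0^t S(s)f'(t-s)\,ds$, which is continuous in $t$ by the strong continuity of $S$ and of $f'$. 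Since $S(s)x\in X_0$ for every $x\in X$, both $u(t)$ and $(S*f)(t)=\int_0^t u(s)\,ds$ take values in $X_0$; I would record this last identity, as it is precisely what makes $(\mathrm{ii})$--$(\mathrm{iii})$ consistent.

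For $(\mathrm{iv})$ I would apply $R_\lambda(A)$ to the formula for $u(t)$. The key sub-step is that for every $x\in X$ one has $R_\lambda(A)x\in D(A)\subset X_0$, so that the commutation of $S(s)$ with $R_\lambda(A)$ together with \eqref{eq2.2} (and $S(s)y=\int_0^s T(r)y\,dr$ for $y\in X_0$) yields $R_\lambda(A)S(s)x=\int_0^s T(r)R_\lambda(A)x\,dr$. Substituting this into $R_\lambda(A)u(t)$ and writing $g:=R_\lambda(A)f\in C^1([0,\tau_0];X_0)$, the resulting iterated integrals coincide, by the classical variation-of-constants formula for the $C_0$-semigroup $T$ on $X_0$, with $\int_0^t T(t-s)g(s)\,ds$; this is exactly $(\mathrm{iv})$.

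Finally, $(\mathrm{ii})$ and $(\mathrm{iii})$ follow from $(\mathrm{iv})$ by a resolvent reduction. Setting $w(t):=R_\lambda(A)u(t)=\int_0^t T(t-s)R_\lambda(A)f(s)\,ds$, the standard mild-solution identity for the $C_0$-semigroup $T$ gives $\int_0^t w(s)\,ds\in D(A_0)$ together with $A_0\int_0^t w(s)\,ds=w(t)-\int_0^t R_\lambda(A)f(s)\,ds$. Since $\int_0^t w(s)\,ds=R_\lambda(A)(S*f)(t)$ and $A_0=A|_{X_0}$, I would then use $AR_\lambda(A)=\lambda R_\lambda(A)-I$ to rearrange this into $(S*f)(t)=R_\lambda(A)z$ with $z=\lambda (S*f)(t)-u(t)+\int_0^t f(s)\,ds\in X$; hence $(S*f)(t)\in\mathrm{Range}\,R_\lambda(A)=D(A)$, which is $(\mathrm{ii})$, and reading off $(\lambda I-A)(S*f)(t)=z$ gives $u(t)=A(S*f)(t)+\int_0^t f(s)\,ds=A\int_0^t u(s)\,ds+\int_0^t f(s)\,ds$, which is $(\mathrm{iii})$.

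I expect the main obstacle to be precisely the non-dense domain: one must never differentiate $S$ against an $X$-valued argument, nor apply $A$ before first passing into $X_0$. Establishing $(\mathrm{ii})$ honestly --- that the convolution itself, and not merely its image under $R_\lambda(A)$, lies in $D(A)$ --- is the delicate point, and it is what forces the detour through the $C_0$-semigroup mild-solution identity combined with the closedness of $A$ and the range characterization $D(A)=\mathrm{Range}\,R_\lambda(A)$. (An alternative for $(\mathrm{ii})$--$(\mathrm{iii})$ would integrate by parts using the primitive $\int_0^s S(r)\cdot\,dr$, which maps into $D(A)$, but this leans on the fundamental integrated-semigroup identity $A\int_0^t S(s)x\,ds=S(t)x-tx$, whereas the resolvent route uses only facts already assembled in the excerpt.) The remaining points are routine: justifying differentiation under the integral, and interchanging $A$ and $R_\lambda(A)$ with the Bochner integrals via closedness.
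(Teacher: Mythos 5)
You should note at the outset that the paper contains no proof of this statement to compare against: Lemma \ref{lem2.7} is imported verbatim from Magal--Ruan \cite{PM2007} (their Lemma 2.6) and is used as a black box. Judged on its own, your proof is correct, and it uses only facts the paper actually assembles in Section \ref{sec2}. Part (i) works because you keep the derivative on the $C^1$ factor $f$ (differentiating $S$ against $X$-valued data is exactly what is forbidden here), and the Leibniz rule applies since $(s,t)\mapsto S(s)f(t-s)$ and $(s,t)\mapsto S(s)f'(t-s)$ are jointly continuous with $\sup_{s\in[0,\tau_0]}\Vert S(s)\Vert_{\mathcal{L}\left(X\right)}<\infty$. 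Part (iv) is right: the commutation $R_\lambda\left(A\right)S\left(s\right)=S\left(s\right)R_\lambda\left(A\right)$ together with $R_\lambda\left(A\right)x\in D\left(A\right)\subset X_0$ gives $R_\lambda\left(A\right)S\left(s\right)x=\int_0^s T\left(r\right)R_\lambda\left(A\right)x\,dr$ for every $x\in X$, and the identification of the resulting iterated integral with $\int_0^t T\left(t-s\right)R_\lambda\left(A\right)f\left(s\right)ds$ is a short Fubini computation after writing $g\left(t-s\right)=g\left(0\right)+\int_0^{t-s}g'\left(\rho\right)d\rho$; your phrase ``by the classical variation-of-constants formula'' compresses exactly this step and deserves the one explicit line. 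The resolvent reduction for (ii)--(iii) is also watertight: the mild-solution identity $A_0\int_0^t w\left(s\right)ds=w\left(t\right)-\int_0^t g\left(s\right)ds$ for $w\left(t\right)=\int_0^t T\left(t-s\right)g\left(s\right)ds$, combined with $R_\lambda\left(A\right)\int_0^t u\left(s\right)ds=R_\lambda\left(A\right)\left(S*f\right)\left(t\right)$, the relation $AR_\lambda\left(A\right)=\lambda R_\lambda\left(A\right)-I$, and $D\left(A\right)=\mathrm{Range}\,R_\lambda\left(A\right)$, does strip the resolvent off correctly; the hinge is the identity $\left(S*f\right)\left(t\right)=\int_0^t u\left(s\right)ds$, which you rightly record in Step (i).

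For what it is worth, the proof in the cited source runs along the lines of the alternative you flag at the end: integrate by parts to get $\left(S*f\right)\left(t\right)=\left(\int_0^t S\left(r\right)dr\right)f\left(0\right)+\int_0^t\left(\int_0^s S\left(r\right)dr\right)f'\left(t-s\right)ds$, then use the generator identity $A\int_0^s S\left(r\right)x\,dr=S\left(s\right)x-sx$ and the closedness of $A$ to pull $A$ inside the integral, which yields (ii)--(iii) directly and independently of (iv). Your route buys self-containedness relative to this paper's preliminaries (no appeal to the generator characterization of integrated semigroups beyond what is restated here); the price is that (ii)--(iii) become downstream of (iv) rather than parallel to it. Either way, the argument stands.
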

\begin{definition}
	A continuous map $u\in{C\left(\left[0,\tau_0\right],X\right)}$ is called an integrated solution(or mild solution) of \eqref{eq2.3} if and only if
	\begin{equation}\label{eq2.4}
		\int_0^tu\left(s\right)ds\in{D\left(A\right)}, ~\forall{t\in\left[0,\tau_0\right]}.
	\end{equation}
	and
	\begin{equation*}
		u\left(t\right)=x+A\int_0^t{u\left(s\right)}ds+\int_0^t{f\left(s\right)}ds, ~\forall{t\in\left[0,\tau_0\right]}.
	\end{equation*}
\end{definition}
We assume additionally $A$ is a closed operator, from \eqref{eq2.4} we know that if $u$ is an integrated solution of \eqref{eq2.4}, then $u\left(t\right)\in{X_0}$, $\forall{t\in\left[0,\tau_0
	\right]}$. And we need the following assumption:
\begin{assumption}\label{as2.2}
	Assume that there exists a non-decreasing map $\delta:\left[0,\tau_0\right]\to[0,+\infty)$ satisfying
	\begin{equation*}
		\lim_{t\to0^+}\delta\left(t\right)=0.
	\end{equation*}	
	such that for each $f\in{C\left(\left[0,\tau_0\right];X\right)}$,
	\begin{equation*}
		\left\Vert\frac{d}{dt}\left(S*f\right)\left(t\right)\right\Vert\leq\delta\left(t\right)\underset{s\in[0,t]}{\sup}\left\Vert{f\left(s\right)}\right\Vert,~\forall{t\in\left[0,\tau_0\right]}.
	\end{equation*}
\end{assumption}

\begin{lemma}\cite[Corollary 2.12]{PM2007}
	Let Assumption \ref{as2.1} and Assumption \ref{as2.2} be satisfied. Then for each $x\in{X_0}$ and each $f\in{L^1\left(\left(0,\tau_0\right),X\right)}$, \eqref{eq2.3} has a unique integrated solution $u\in{C\left(\left[0,\tau_0\right],X_0\right)}$ given by
	\begin{equation*}
		u\left(t\right)=T\left(t\right)x+\frac{d}{dt}\left(S*f\right)\left(t\right),~\forall{t\in\left[0,\tau_0\right]}.
	\end{equation*}
\end{lemma}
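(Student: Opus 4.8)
The plan is to decompose the candidate $u(t)=T(t)x+\frac{d}{dt}(S*f)(t)$ into its homogeneous piece $T(t)x$ and its forced piece $v(t):=\frac{d}{dt}(S*f)(t)$, to verify the integrated identity first for smooth forcing, then to pass to general $f\in L^1$ by density, and to treat uniqueness by reduction to the homogeneous problem. First I would fix $f\in C^1([0,\tau_0],X)$ and extract from Lemma \ref{lem2.7} everything about $v$: part (i) gives $v\in C([0,\tau_0],X)$; part (ii) together with $\int_0^t v(s)\,ds=(S*f)(t)$ (valid since $S(0)=0$) gives $\int_0^t v\in D(A)$; and part (iii) gives the forced identity $v(t)=A\int_0^t v(s)\,ds+\int_0^t f(s)\,ds$. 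For the homogeneous piece I would use that $A_0$ generates the $C_0$-semigroup $T$ on $X_0$ (Assumption \ref{as2.1} with Lemma \ref{lem2.4}), so that for $x\in X_0$ one has $\int_0^t T(s)x\,ds\in D(A_0)\subseteq D(A)$ and $A\int_0^t T(s)x\,ds=T(t)x-x$. Adding the two identities, and using that both integrals lie in $D(A)$, gives
\begin{equation*}
u(t)=x+A\int_0^t u(s)\,ds+\int_0^t f(s)\,ds,\qquad \int_0^t u(s)\,ds\in D(A),
\end{equation*}
with $u(0)=x$, so $u$ is an integrated solution; continuity into $X_0$ follows from strong continuity of $T$ and from the remark following \eqref{eq2.4}, which forces $v(t)\in X_0$.

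Next I would approximate $f\in L^1((0,\tau_0),X)$ by $f_n\in C^1$ with $f_n\to f$ in $L^1$, let $u_n$ be the associated solutions from the previous step, and try to pass to the limit. For each fixed $\lambda>\vartheta$, Lemma \ref{lem2.7}(iv) yields
\begin{equation*}
R_\lambda(A)\big(u_n(t)-u_m(t)\big)=\int_0^t T(t-s)R_\lambda(A)\big(f_n(s)-f_m(s)\big)\,ds,
\end{equation*}
whose norm is at most $Me^{|\vartheta|\tau_0}\|R_\lambda(A)\|_{\mathcal{L}(X)}\|f_n-f_m\|_{L^1}$, so for each fixed $\lambda$ the family $R_\lambda(A)u_n(t)$ is Cauchy uniformly in $t$. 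Since each $u_n(t)$ lies in $X_0$, where $\lambda R_\lambda(A)\to I$ strongly (Lemma \ref{lem2.3}(i)) with the uniform Hille--Yosida bound of Assumption \ref{as2.1}(a), one recovers $u_n(t)$ from its resolvent images as $\lambda\to+\infty$. Granting Cauchyness of $(u_n)$ in $C([0,\tau_0],X_0)$, the limit $u$ is continuous into $X_0$, and because $A$ is closed while $A\int_0^t u_n=u_n-x-\int_0^t f_n$ converges, one obtains $\int_0^t u\in D(A)$ together with the integrated identity for $f$; by construction $u=T(\cdot)x+\frac{d}{dt}(S*f)$, the forced term now read as the continuous $L^1$-extension produced by this limit.

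For uniqueness, suppose $u_1,u_2$ are integrated solutions for the same pair $(x,f)$; then $w:=u_1-u_2$ is continuous with values in $X_0$, satisfies $\int_0^t w\in D(A)$, $w(t)=A\int_0^t w(s)\,ds$, and $w(0)=0$. Since $A\int_0^t w=w(t)\in X_0$, in fact $\int_0^t w\in D(A_0)$ and $w(t)=A_0\int_0^t w$. Writing $W(t):=\int_0^t w(s)\,ds$, this says $W\in C^1$, $W(t)\in D(A_0)$, $W'(t)=A_0 W(t)$ and $W(0)=0$; uniqueness of classical solutions for the $C_0$-semigroup generated by $A_0$ forces $W\equiv0$, hence $w\equiv0$. (Once the explicit formula is in hand, uniqueness is in any case automatic.)

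I expect the $L^1$-extension to be the genuine difficulty. The smooth case is only bookkeeping on top of Lemma \ref{lem2.7} and standard $C_0$-semigroup facts, and uniqueness reduces cleanly to the homogeneous equation; but the resolvent identity controls only $R_\lambda(A)u_n$ in an $L^1$ fashion for each separate $\lambda$, whereas one needs $u_n$ itself to be Cauchy. Reconciling the \emph{supremum-norm} content of Assumption \ref{as2.2} with the mere \emph{$L^1$}-convergence of the $f_n$ --- making the $\lambda\to+\infty$ passage quantitative enough to transfer the per-$\lambda$ estimate onto $u_n$ in $C([0,\tau_0],X_0)$ --- is the step where the real care is required, and is exactly the role for which the non-decreasing map $\delta$ of Assumption \ref{as2.2} is introduced.
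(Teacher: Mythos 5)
First, a structural point: the paper never proves this lemma at all --- it is imported verbatim, with a citation, from \cite[Corollary 2.12]{PM2007}, so there is no in-paper argument to compare yours against, and your attempt has to stand on its own. Two of its three parts do stand. The $C^1$ case is correct bookkeeping on top of Lemma \ref{lem2.7}: parts (i)--(iii) give the integrated identity for $v=\frac{d}{dt}\left(S*f\right)$, the homogeneous piece is standard for the $C_0$-semigroup generated by $A_0$, and adding the two identities is legitimate. Your uniqueness argument --- setting $W\left(t\right)=\int_0^t w\left(s\right)ds$, observing that $AW\left(t\right)=w\left(t\right)\in X_0$ forces $W\left(t\right)\in D\left(A_0\right)$, and invoking uniqueness of classical solutions of $W'=A_0W$, $W\left(0\right)=0$ --- is the standard and correct reduction.

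The genuine gap is the $L^1$ step, which you explicitly ``grant'' rather than prove, and the difficulty you flag there is not merely a matter of care: it cannot be closed by the route you propose, nor by any density argument based on Assumption \ref{as2.2}. Concretely: (a) your per-$\lambda$ estimate controls $R_\lambda\left(A\right)\left(u_n-u_m\right)$ with the constant $\Vert R_\lambda\left(A\right)\Vert_{\mathcal{L}\left(X\right)}$, and to recover $u_n-u_m$ you must send $\lambda\to+\infty$ \emph{after} $n,m\to\infty$; this interchange requires either a bound on $\lambda\Vert R_\lambda\left(A\right)\Vert_{\mathcal{L}\left(X\right)}$ uniform in $\lambda$, or uniformity in $n$ of the convergence $\lambda R_\lambda\left(A\right)u_n\left(t\right)\to u_n\left(t\right)$, and neither is available, because $A$ is Hille--Yosida only on $X_0$, not on $X$. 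Indeed, in the paper's own parabolic example the operator $\mathcal{A}$ is $3/4$-almost sectorial, so $\Vert R_\lambda\left(\mathcal{A}\right)\Vert\sim\lambda^{-3/4}$ and $\lambda\Vert R_\lambda\left(\mathcal{A}\right)\Vert\sim\lambda^{1/4}\to\infty$: the loss you would need to absorb is exactly the ill-posedness the paper is about. (b) More fundamentally, Assumption \ref{as2.2} asserts continuity of $f\mapsto S\diamond f$ with respect to the \emph{sup} norm of $f$; it supports exactly one density passage, from $f\in C^1$ to $f\in C\left(\left[0,\tau_0\right],X\right)$ by uniform approximation, and nothing more. A $C^1$-to-$L^1$ density argument would require $\sup_{t\leq\tau_0}\Vert\left(S\diamond f\right)\left(t\right)\Vert\leq C\Vert f\Vert_{L^1}$, which does not follow from the sup-norm bound and fails in the almost-sectorial setting (there $\delta\left(t\right)\sim t^{1/4}$, and the convolution is controlled only in $L^p$ for $p$ large --- precisely the reason \cite{PM2007} develops an $L^p$ framework with its own $L^p$-type hypothesis). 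So the $L^1$ form of the statement genuinely rests on the hypotheses of \cite{PM2007} rather than on Assumption \ref{as2.2}; what your argument can and should deliver under the paper's assumptions is the statement for continuous $f$, which is also all that the rest of the paper actually uses.
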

Since $\frac{d}{dt}\left(S*f\right)\left(t\right)\in{X_0}$ for all $t\in\left[0,\tau_0\right]$, denote $\frac{d}{dt}\left(S*f\right)\left(t\right)$ as $\left(S\diamond{f}\right)\left(t\right)$, by Lemma \ref{lem2.7}(iv) and Lemma \ref{lem2.3}(i), we have 
\begin{equation*}
	\left(S\diamond{f}\right)\left(t\right)=\lim_{\lambda\to+\infty}\int_0^tT\left(t-s\right)\lambda{R_\lambda\left(A\right)}f\left(s\right)ds, ~\lambda>\vartheta.
\end{equation*}

Similarly, we could define the integrated solution of \eqref{eq1.1}.
\begin{definition}\label{def2.10}
	For $\tau\in\mathbb{R}$, if $u\in{C\left([\tau,+\infty),L^2\left(\Omega,X\right)\right)}$ is a $\mathcal{F}_t$-progressively measurable process, then it's called an integrated solution(or mild solution) of \eqref{eq1.1} if and only if
	\begin{equation*}
		\int_\tau^tu\left(s\right)ds\in{D\left(A\right)}, ~\forall{t\in[\tau,+\infty)}.
	\end{equation*}    
	and
	\begin{equation*}
		u\left(t\right)=u_0+A\int_\tau^t{u\left(s\right)}ds+\int_\tau^t{F\left(u\left(s\right)\right)}ds+\int_\tau^t{\sigma\left(u\left(s\right)\right)}dW\left(s\right), ~\forall{t\in[\tau,+\infty)}.
	\end{equation*}
\end{definition}
Since we shall consider \eqref{eq1.1} with $t\in\mathbb{R}$, we give the following modified assumption 
\begin{assumption}\label{as2.3}
	Assume that there exists a non-decreasing map $\delta:\mathbb{R}\to[0,+\infty)$ satisfying
	\begin{equation*}
		\lim_{t\to0}\delta\left(t\right)=0.
	\end{equation*}	
	such that for each $f\in{C\left(\mathbb{R};X\right)}$,
	\begin{equation*}
		\left\Vert\left(S\diamond{f}\right)\left(t\right)\right\Vert\leq\delta\left(t\right)\underset{s\in[0,t]}{\sup}\left\Vert{f\left(s\right)}\right\Vert,~\forall{t\in\mathbb{R}}.
	\end{equation*}
\end{assumption}

\begin{lemma}\label{lem0}\cite[Proposition 2.13]{PM_2009}
	Let Assumption \ref{as2.1} and Assumption \ref{as2.3} be satisfied. Then for $\kappa>\vartheta$, there exists $C_\kappa>0$ such that $f\in{C\left(\mathbb{R};X\right)}$ and
	\begin{equation*}
		\left\Vert\left(S\diamond{f}\right)\left(t\right)\right\Vert\leq{C_\kappa}\underset{s\in[0,t]}{\sup}e^{\kappa\left(t-s\right)}\left\Vert{f\left(t\right)}\right\Vert,~\forall{t\in\mathbb{R}}.
	\end{equation*} 
	Moreover, for each $\varepsilon>0$, if $\rho_\varepsilon>0$ satisfying $M\delta\left(\rho_\varepsilon\right)\leq\varepsilon$, it holds
	\begin{equation*}
		C_\kappa=\frac{2\varepsilon{\max\left(1,e^{-\kappa\rho_\varepsilon}\right)}}{1-e^{\left(\vartheta-\kappa\right)\rho_\varepsilon}}. 
	\end{equation*}
\end{lemma}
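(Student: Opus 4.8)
The plan is to exploit a semigroup cocycle decomposition of the map $S\diamond$ and then to upgrade the purely local bound of Assumption \ref{as2.3} into an exponentially weighted global bound by partitioning $[0,t]$ and summing a geometric series. Writing $f_r(\cdot):=f(r+\cdot)$ and using the Yosida-approximation representation $(S\diamond f)(t)=\lim_{\lambda\to+\infty}\int_0^t T(t-s)\lambda R_\lambda(A)f(s)\,ds$ recorded just above the statement, I would first establish the identity
\[
(S\diamond f)(t)=T(t-r)\,(S\diamond f)(r)+(S\diamond f_r)(t-r),\qquad 0\le r\le t.
\]
This follows by splitting $\int_0^t=\int_0^r+\int_r^t$, pulling $T(t-r)$ out of the first integral through the semigroup law $T(t-s)=T(t-r)T(r-s)$, substituting $s=r+\sigma$ in the second, and passing to the limit $\lambda\to+\infty$ in each piece (legitimate since $T(t-r)$ is a fixed bounded operator and the approximations converge by Assumption \ref{as2.1} and Lemma \ref{lem2.3}). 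Because $\delta(0)=0$, the representation (equivalently Assumption \ref{as2.3} at $t=0$) also gives $(S\diamond f)(0)=0$.

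Next, fixing $\varepsilon>0$ and $\rho:=\rho_\varepsilon$ with $M\delta(\rho)\le\varepsilon$, and for $t>0$ choosing the integer $n$ with $n\rho\le t<(n+1)\rho$, I would iterate the decomposition at the mesh points $r=t-\rho,\,t-2\rho,\dots$, which telescopes into
\[
(S\diamond f)(t)=T(n\rho)\,(S\diamond f_{\,0})(t-n\rho)+\sum_{j=1}^{n}T((j-1)\rho)\,(S\diamond f_{t-j\rho})(\rho),
\]
where the $j$-th summand is the local contribution over $[t-j\rho,\,t-(j-1)\rho]$ and the leading term is the local contribution over the shortest piece $[0,t-n\rho]$. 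The decisive point is that the propagator attached to the $j$-th piece is $T(\rho)^{\,j-1}=T((j-1)\rho)$, so that $\|T((j-1)\rho)\|_{\mathcal L(X)}\le Me^{\vartheta(j-1)\rho}$ carries the constant $M$ only \emph{once} rather than as $M^{\,j-1}$; this is exactly what keeps the ratio of the forthcoming geometric series equal to $e^{(\vartheta-\kappa)\rho}$ and not $Me^{(\vartheta-\kappa)\rho}$.

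Then I would bound each local piece by Assumption \ref{as2.3}, namely $\|(S\diamond f_{t-j\rho})(\rho)\|\le\delta(\rho)\sup_{s\in[t-j\rho,\,t-(j-1)\rho]}\|f(s)\|$ (monotonicity of $\delta$ dominating the shorter leading interval by $\delta(\rho)$ as well), and replace the sup by the weighted quantity $B_t:=\sup_{s\in[0,t]}e^{\kappa(t-s)}\|f(s)\|$: for $s$ in the $j$-th subinterval one has $t-s\in[(j-1)\rho,j\rho]$, hence $\|f(s)\|\le e^{-\kappa(t-s)}B_t\le e^{-\kappa(j-1)\rho}\max(1,e^{-\kappa\rho})B_t$. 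Collecting factors yields
\[
\|(S\diamond f)(t)\|\le M\delta(\rho)\,\max(1,e^{-\kappa\rho})\,B_t\sum_{i=0}^{n}\bigl(e^{(\vartheta-\kappa)\rho}\bigr)^{i},
\]
and since $\kappa>\vartheta$ forces $e^{(\vartheta-\kappa)\rho}<1$ the partial sum is bounded by $1/(1-e^{(\vartheta-\kappa)\rho})$. Using $M\delta(\rho)\le\varepsilon$ gives the estimate with constant $\varepsilon\max(1,e^{-\kappa\rho})/(1-e^{(\vartheta-\kappa)\rho})$; the stated $C_\kappa$ simply carries an extra factor $2$ that conservatively absorbs the leftover partial subinterval $[0,t-n\rho)$ and the weight bound on it.

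The main obstacle is not the geometric summation but the bookkeeping of the propagators: one must resist estimating the iterated map one step at a time, which would generate the divergent factor $M^{\,j}$ and spoil the denominator, and instead invoke the semigroup composition law to collapse $T(\rho)^{\,j-1}$ into $T((j-1)\rho)$. The remaining care is the rigorous justification of the decomposition identity — interchanging $\lambda\to+\infty$ with the splitting of the integral and the substitution — which rests on the convergence of the Yosida approximations $\lambda R_\lambda(A)$ from Lemma \ref{lem2.3}, together with the routine monotonicity argument for $\delta$ and the handling of the final partial interval that the factor $2$ is designed to cover.
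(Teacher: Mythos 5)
Your proof is correct, and it is essentially the same argument as the one behind this statement: note that the paper itself does not prove the lemma but imports it verbatim from \cite[Proposition 2.13]{PM_2009}, and the proof there is exactly your scheme --- the cocycle identity $(S\diamond f)(t)=T(t-r)(S\diamond f)(r)+(S\diamond f_r)(t-r)$, subdivision of $[0,t]$ into intervals of length $\rho_\varepsilon$, the local bound of Assumption \ref{as2.3} on each piece, the exponential bound on the propagators $T((j-1)\rho_\varepsilon)$ (correctly collapsed via the semigroup law so that $M$ appears once, not as $M^{j-1}$), and a geometric series with ratio $e^{(\vartheta-\kappa)\rho_\varepsilon}<1$. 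Two minor remarks: the semigroup bound you use should be stated in $\mathcal{L}(X_0)$ rather than $\mathcal{L}(X)$, since the local pieces $(S\diamond f_{t-j\rho_\varepsilon})(\rho_\varepsilon)$ lie in $X_0$ and $T$ is only a semigroup there; and your bookkeeping in fact yields the sharper constant $\varepsilon\max\left(1,e^{-\kappa\rho_\varepsilon}\right)/\left(1-e^{(\vartheta-\kappa)\rho_\varepsilon}\right)$, the factor $2$ in the stated $C_\kappa$ being pure slack, so your estimate implies the lemma as stated (with the statement's $\left\Vert f(t)\right\Vert$ and ``$\forall t\in\mathbb{R}$'' read, as you did, as $\left\Vert f(s)\right\Vert$ and $t\geq 0$).
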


We now are ready to present the existence of solution to \eqref{eq1.1}.

\begin{theorem}\label{thm2.12}
	Let Assumption\ref{as1}, Assumption \ref{as2.1} and Assumption \ref{as2.3} be satisfied. Given $\tau\in\mathbb{R}$ and $u_0\in{L^2\left(\Omega,\mathcal{F}_\tau;X_0\right)}$, then \eqref{eq1.1} possesses a unique integrated solution in $L^2\left(\Omega,X_0\right)$ on $[\tau,+\infty)$ given by
	\begin{equation}\label{eq2.5}
		\begin{split}
			u\left(t,\tau,u_0\right)=&T\left(t-\tau\right)u_0+\lim_{\lambda\to+\infty}\int_\tau^tT\left(t-s\right)\lambda{R_\lambda\left(A\right)}F\left(u\left(s\right)\right)ds\\&+\lim_{\lambda\to+\infty}\int_\tau^tT\left(t-s\right)\lambda{R_\lambda\left(A\right)}\sigma\left(u\left(s\right)\right)dW\left(s\right).
		\end{split}
	\end{equation}
\end{theorem}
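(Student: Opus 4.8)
The plan is to realize \eqref{eq2.5} as the unique fixed point of the integral operator it defines and then to continue the resulting local solution to the whole half-line. Fix $a>0$ and let $\mathcal{B}_a$ be the Banach space of $\mathcal{F}_t$-progressively measurable processes $u\in C\left([\tau,\tau+a],L^2\left(\Omega,X_0\right)\right)$ equipped with $\|u\|_{\mathcal{B}_a}=\sup_{t\in[\tau,\tau+a]}\|u(t)\|_{L^2(\Omega,X_0)}$, and define $\mathcal{T}$ on $\mathcal{B}_a$ by letting $(\mathcal{T}u)(t)$ be the right-hand side of \eqref{eq2.5}. The theorem then reduces to showing that (i) $\mathcal{T}$ is well defined and maps $\mathcal{B}_a$ into itself, (ii) $\mathcal{T}$ is a contraction for $a$ small, with a constant depending only on $L_1,L_2,M,\vartheta$ and the map $\delta$ of Assumption \ref{as2.3} but not on $u_0$, and (iii) the fixed point satisfies the integral identity of Definition \ref{def2.10}. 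Because $F(0)=0$ and $\sigma(0)=0$, the admissible step length $a$ is uniform in the initial value, so gluing solutions across consecutive intervals of length $a$ produces the solution on $[\tau,+\infty)$, and uniqueness propagates interval by interval.

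The drift convolution is governed by the deterministic integrated-semigroup calculus. Since $s\mapsto F(u(s))$ is continuous into $L^2(\Omega,X)$, Lemma \ref{lem2.7}(iv) and Lemma \ref{lem2.3}(i) identify $\lim_{\lambda\to+\infty}\int_\tau^t T(t-s)\lambda R_\lambda(A)F(u(s))\,ds$ with the convolution $(S\diamond F(u))$ over $[\tau,t]$, and Assumption \ref{as2.3} bounds it by $\delta(t-\tau)\sup_{s\in[\tau,t]}\|F(u(s))\|$. Combining this with the Lipschitz estimate \eqref{eq1} controls the drift part of $\mathcal{T}u-\mathcal{T}v$ by a multiple of $\delta(a)L_1\|u-v\|_{\mathcal{B}_a}$; as $\delta$ is non-decreasing with $\delta(0^+)=0$, this contribution is made arbitrarily small by shrinking $a$.

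The genuine difficulty is the diffusion term, that is, to make sense of $Z(t):=\lim_{\lambda\to+\infty}\int_\tau^t T(t-s)\lambda R_\lambda(A)\sigma(u(s))\,dW(s)$ in $L^2(\Omega,X_0)$ and to estimate its second moment. The naive route through the It\^o isometry is unavailable: the isometry demands a pointwise-in-$s$ bound on the integrand $T(t-s)\lambda R_\lambda(A)\sigma(u(s))$, yet $\sigma$ takes values in $X$, not $X_0$, and on $X\setminus X_0$ the maps $\lambda R_\lambda(A)$ need not remain bounded as $\lambda\to+\infty$; indeed even $T(r)\lambda R_\lambda(A)x$ may fail to converge pointwise in $r$, precisely because $S(\cdot)x$ is differentiable only for $x\in X_0$. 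My plan is therefore to keep $\lambda$ finite, where $\lambda R_\lambda(A)\in\mathcal{L}(X,X_0)$ is bounded and $Z_\lambda(t):=\int_\tau^t T(t-s)\lambda R_\lambda(A)\sigma(u(s))\,dW(s)$ is a genuine $X_0$-valued stochastic convolution, and to prove that $\{Z_\lambda(t)\}_\lambda$ is Cauchy in $L^2(\Omega,X_0)$. The point is that it is the integration, not the pointwise integrand, that regularizes, exactly as in the deterministic identity $\lim_{\lambda\to+\infty}\int_0^r T(\rho)\lambda R_\lambda(A)x\,d\rho=S(r)x$ valid for all $x\in X$; the $dW$-integration should likewise permit the limit to be passed after integrating, so that convergence is reduced to the strong convergence $\lambda R_\lambda(A)\to I$ on $X_0$ applied to an $X_0$-valued limit object. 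Securing this Cauchy property together with a second-moment bound $\mathbb{E}\|Z(t)\|^2\le C(a)L_2^2\sup_{s}\mathbb{E}\|u(s)\|^2$ with $C(a)\to0$ as $a\to0$ --- and thereby reconciling the It\^o calculus with the integrated-semigroup smoothing encoded in Assumption \ref{as2.3} --- is the crux, and I expect it to be the main obstacle of the proof.

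With both estimates secured the contraction is assembled: progressive measurability and $X_0$-valuedness of $\mathcal{T}u$ descend from those of the approximants $Z_\lambda$ and of $(S\diamond F(u))$, mean-square continuity in $t$ follows from the strong continuity of $T$ and the continuity of the stochastic convolution, and adding the drift and diffusion contributions gives $\|\mathcal{T}u-\mathcal{T}v\|_{\mathcal{B}_a}\le\theta(a)\|u-v\|_{\mathcal{B}_a}$ with $\theta(a)<1$ for small $a$. Banach's fixed point theorem then yields the unique local solution; concatenating across intervals of the uniform length $a$ gives the unique integrated solution on $[\tau,+\infty)$ represented by \eqref{eq2.5}, and a final application of Lemma \ref{lem2.7} confirms that it obeys the integral equation of Definition \ref{def2.10}.
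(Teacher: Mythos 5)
Your proposal contains a genuine gap, and you name it yourself: the mean-square estimate for the stochastic convolution $Z(t)=\lim_{\lambda\to+\infty}\int_\tau^t T(t-s)\lambda R_\lambda(A)\sigma(u(s))\,dW(s)$ is never established. You correctly diagnose why the naive route fails ($\lambda R_\lambda(A)$ is not uniformly bounded on $X\setminus X_0$, so the It\^o isometry cannot be combined with a pointwise bound on the integrand), and you sketch a sensible strategy (keep $\lambda$ finite, prove $\{Z_\lambda(t)\}_\lambda$ is Cauchy in $L^2(\Omega,X_0)$, extract a bound $\mathbb{E}\|Z(t)\|^2\le C(a)L_2^2\sup_s\mathbb{E}\|u(s)\|^2$ with $C(a)\to 0$), but you then write that you ``expect it to be the main obstacle of the proof.'' That is a plan, not a proof: without this estimate neither the well-definedness of your operator $\mathcal{T}$ nor its contraction property can be concluded, so the whole fixed-point scheme is unsupported at its central point. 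The paper closes exactly this hole by invoking Lemma \ref{lem0} (the quantitative form of Assumption \ref{as2.3}, with the explicit constant $C_\kappa=\frac{2\varepsilon\max(1,e^{-\kappa\rho_\varepsilon})}{1-e^{(\vartheta-\kappa)\rho_\varepsilon}}$): in the estimate \eqref{eq2.50} the same $C_{\kappa_0}$ bound is applied both to the drift convolution and to the stochastic convolution, the latter measured through $\mathbb{E}\bigl[\|\sigma(u_1)-\sigma(u_2)\|^2_{\mathcal{L}_2(Y_0,X)}\bigr]^{1/2}$. Any completion of your argument would have to bring Assumption \ref{as2.3} to bear on the $dW$-integral in a comparable quantitative way; the Cauchy property of $Z_\lambda$ alone, even if proved, does not by itself produce the smallness $C(a)\to 0$ that your shrink-the-interval contraction needs.

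A secondary structural difference is worth noting, since it interacts with the gap. The paper does not localize in time at all: it works on the whole half-line $[\tau,+\infty)$ in the exponentially weighted space $\mathcal{C}_{\kappa_0,\tau}$ with norm $\sup_{t\ge\tau}e^{-\kappa_0 t}\|\cdot\|_{L^2(\Omega,X_0)}$, and obtains the contraction constant $4(L_1+L_2)\varepsilon<\tfrac12$ not by shrinking a time step but by choosing $\varepsilon$ (hence $\rho_\varepsilon$) small and the weight $\kappa_0$ large inside $C_{\kappa_0}$. This yields the global solution in one stroke, with no gluing and no need for a step length uniform in the initial datum. Your local-contraction-plus-concatenation architecture is legitimate in principle (the global Lipschitz constants and $F(0)=0$, $\sigma(0)=0$ do make the step uniform), but it places all the weight on precisely the stochastic estimate you left open, whereas the paper's weighted-norm formulation lets Lemma \ref{lem0} deliver drift and diffusion bounds in the same breath.
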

\begin{proof}
Note that	\eqref{eq2.5} is the so-called modified variation of constants formula. By applying the resolvent operator ${\left(\lambda{I}-A\right)}^{-1}$ which maps $X$ to $X_0$  and regarding that $D\left(A_0\right)={\left(\lambda{I}-A\right)}^{-1}X_0$, we could turn \eqref{eq1.1} to a evolution equation on $X_0$ on which $A_0$ generates a $C_0$-semigroup as follows
	\begin{equation}\label{eq2.6}
		d{R_\lambda\left(A\right)}u\left(t\right)=A_0{R_\lambda\left(A\right)}u\left(t\right)dt+{R_\lambda\left(A\right)}F\left(u\left(t\right)\right)dt+{R_\lambda\left(A\right)}\sigma\left(u\left(t\right)\right)dW\left(t\right).  
	\end{equation}
	Then by the variation of constants formula, we could obtain the mild solution of \eqref{eq2.6} given by
	\begin{equation*}
		\begin{split}
			{R_\lambda\left(A\right)}u\left(t\right)=&T\left(t-\tau\right){R_\lambda\left(A\right)}u_0+\int_\tau^tT\left(t-s\right){R_\lambda\left(A\right)}F\left(u\left(s\right)\right)ds\\&+\int_\tau^tT\left(t-s\right){R_\lambda\left(A\right)}\sigma\left(u\left(s\right)\right)dW\left(s\right).
		\end{split}
	\end{equation*}
	By Lemma \ref{lem2.3}(i), we obtain \eqref{eq2.5}. Rewrite \eqref{eq2.5} as
	\begin{equation}\label{eq2.8}
		u\left(t\right)=T\left(t-\tau\right)u_0+\left(S\diamond{\left(F\left(u\right)+\sigma\left(u\right)dW\right)}\right)\left(s\right).
	\end{equation}
	By Lemma \ref{lem2.7}(iii) and Definition \ref{def2.10}, \eqref{eq2.8} is an integrated solution of \eqref{eq1.1}.  For given $\kappa_0>\vartheta$, we denote by $\mathcal{C}_{\kappa_0,\tau}$ the space of all $X_0$-valued $\mathcal{F}_t$-progressively measurable processes $f\left(t\right), t\in[\tau,+\infty)$ such that $f:[\tau,+\infty)\to{L^2\left(\Omega,X_0\right)}$ is continuous and \begin{equation*}
		\underset{t\geq\tau}{\sup}\left(e^{-\kappa{t}}{\left\Vert{f}\left(t\right)\right\Vert}_{L^2\left(\Omega,X_0\right)}\right)<\infty,
	\end{equation*}
	with norm
	\begin{equation*}
		{\left\Vert{f}\left(t\right)\right\Vert}_{\mathcal{C}_{\kappa_0,\tau}}=\underset{t\geq\tau}{\sup}\left(e^{-\kappa_0{t}}{\left\Vert{f}\left(t\right)\right\Vert}_{L^2\left(\Omega,X_0\right)}\right),~\forall{f\in{\mathcal{C}_{\kappa_0,\tau}}}.
	\end{equation*}
	Denote $\mathcal{G}_{u_0}\left(u\right)\left(t\right)$ the right side of \eqref{eq2.5}, i.e.
	\begin{equation*}\begin{split}
			\mathcal{G}_{u_0}\left(u\right)\left(t\right)=&T\left(t-\tau\right)u_0+\lim_{\lambda\to+\infty}\int_\tau^tT\left(t-s\right)\lambda{R_\lambda\left(A\right)}F\left(u\left(s\right)\right)ds\\&+\lim_{\lambda\to+\infty}\int_\tau^tT\left(t-s\right)\lambda{R_\lambda\left(A\right)}\sigma\left(u\left(s\right)\right)dW\left(s\right).
		\end{split}
	\end{equation*}
	For $u_1,u_2\in{C\left([\tau,+\infty),L^2\left(\Omega,X_0\right)\right)}$ be two $X_0$-valued $\mathcal{F}_t$-progressively measurable processes, we have, for $t\geq\tau$,
	\begin{equation*}\begin{split}
			&\left\Vert{\mathcal{G}_{u_0}\left(u_1\right)\left(t\right)-\mathcal{G}_{u_0}\left(u_2\right)\left(t\right)}\right\Vert_{L^2\left(\Omega,X_0\right)}\\&~\leq\lim_{\lambda\to+\infty}\int_0^{t-\tau}\left\Vert{T\left(t-g\right)\lambda{R_\lambda\left(A\right)}\left(F\left(u_1\left(g\right)\right)-F\left(u_2\left(g\right)\right)\right)}\right\Vert_{L^2\left(\Omega,X_0\right)}dl\\&~~+\left\Vert\lim_{\lambda\to+\infty}\int_0^{t-\tau}T\left(t-g\right)\lambda{R_\lambda\left(A\right)}\left(\sigma\left(u_1\left(g\right)\right)-\sigma\left(u_2\left(g\right)\right)\right)dW\left(l\right)\right\Vert_{L^2\left(\Omega,X_0\right)}
		\end{split}
	\end{equation*}
	where $g=l+\tau$. By Lemma \ref{lem0}, \eqref{eq1} and \eqref{eq2}, for $\varepsilon>0$, $\rho_\varepsilon>0$ with $M\delta\left(\rho_\varepsilon\right)\leq\varepsilon$ and $\kappa_0>\vartheta$, there exists a constant
	\begin{equation*}
		C_{\kappa_0}=\frac{2\varepsilon{\max\left(1,e^{-\kappa_0\rho_\varepsilon}\right)}}{1-e^{\left(\vartheta-\kappa_0\right)\rho_\varepsilon}},
	\end{equation*}
	such that
	\begin{equation}\label{eq2.50}\begin{split}
			&e^{-\kappa_0{t}}\left\Vert{\mathcal{G}_{u_0}\left(u_1\right)\left(t\right)-\mathcal{G}_{u_0}\left(u_2\right)\left(t\right)}\right\Vert_{L^2\left(\Omega,X_0\right)}\\&~\leq{L_1}C_{\kappa_0}\underset{l\in\left[0,t-\tau\right]}{\sup}e^{-\kappa_0g}\left\Vert{u_1\left(g\right)-u_2\left(g\right)}\right\Vert_{L^2\left(\Omega,X_0\right)}\\&~~+{C_{\kappa_0}}\underset{l\in[0,t-\tau]}{\sup}e^{-\kappa_0g}\mathbb{E}\left[{\left\Vert\sigma\left(u_1\left(g\right)\right)-\sigma\left(u_2\left(g\right)\right)\right\Vert}^2_{\mathcal{L}_2\left(Y_0,X\right)}\right]^{\frac{1}{2}}\\&~\leq{\left(L_1+L_2\right)}C_{\kappa_0}\left\Vert{u_1-u_2}\right\Vert_{\mathcal{C}_{\kappa_0,\tau}}.\end{split}	
	\end{equation}
	By \eqref{eq2.50}, we get that $\mathcal{G}_{u_0}\left(\cdot\right)\left(t\right):{\mathcal{C}_{\kappa_0,\tau}}\to{\mathcal{C}_{\kappa_0,\tau}}$ is well-defined. Let $\varepsilon>0$ such that $\varepsilon\max\left(\left(L_1+L_2\right),1\right)<1/8$ and $\kappa>\max\left(0,\vartheta\right)$ such that,
	\begin{equation*}
		\frac{1}{1-e^{\left(\vartheta-\kappa_0\right)\rho_\varepsilon}}<2,~\forall{\kappa_0\geq\kappa}.
	\end{equation*}
	Then by \eqref{eq2.50}, we get that
	\begin{equation*}
		\left\Vert{\mathcal{G}_{u_0}\left(u_1\right)\left(t\right)-\mathcal{G}_{u_0}\left(u_2\right)\left(t\right)}\right\Vert_{\mathcal{C}_{\kappa_0,\tau}}\leq4\left(L_1+L_2\right)\varepsilon\left\Vert{u_1-u_2}\right\Vert_{\mathcal{C}_{\kappa_0,\tau}}\leq\frac{1}{2}\left\Vert{u_1-u_2}\right\Vert_{\mathcal{C}_{\kappa_0,\tau}}.
	\end{equation*}
	So $\mathcal{G}_{u_0}$ is a contraction. From the contraction mapping principle, there exists a unique integrated solution of \eqref{eq1.1} in $\mathcal{C}_{\kappa_0,\tau}$ for any given $\tau\in\mathbb{R}$.
	Thus the uniqueness of the solution is proved.
\end{proof}
For further details, please refer to \cite{WA2001,Magal2016,AN2020}. Now we can define a mean-square random dynamical system for \eqref{eq1.1}. Given $t\in\mathbb{R}^+$ and $\tau\in\mathbb{R}$, let $\Phi(t,\tau)$ be a mapping from ${L^2\left(\Omega,\mathcal{F}_\tau;X_0\right)}$ to ${L^2\left(\Omega,\mathcal{F}_{\tau+t};X_0\right)}$ given by
\begin{equation*}
	\Phi(t,\tau)\left(u_0\right)=u\left(t+\tau,\tau,u_0\right).
\end{equation*}
where $u_0\in{L^2\left(\Omega,\mathcal{F}_\tau;X_0\right)}$. Due to the uniqueness of solutions, for $\forall{t,s\geq0}$ and $\tau\in\mathbb{R}$,
\begin{equation*}
	\Phi\left(t+s,\tau\right)=\Phi\left(t,s+\tau\right)\circ\Phi\left(s,\tau\right).
\end{equation*}
As a consequence, $\Phi$ is a mean-square random dynamical system generated by solutions of \eqref{eq1.1} in ${L^2\left(\Omega,\mathcal{F};X\right)}$. Based on the forward solutions \eqref{eq2.5}, we could define the backward solutions on $(-\infty,\tau]$. Given $\tau\in\mathbb{R}$, an $X_0$-valued $\mathcal{F}_t$-progressively measurable process $\xi\left(t\right),t\in(-\infty,\tau]$, is called an integrated solution of \eqref{eq1.1} on $(-\infty,\tau]$ if $\xi\left(r\right)\in{L^2\left(\Omega,\mathcal{F}_r;X_0\right)}$ for all $r\in(-\infty,\tau]$ and $u\left(t,r,\xi\left(r\right)\right)=\xi\left(t\right)$ for all $r\leq{t}\leq{\tau}$, where $u\left(t,r,\xi\left(r\right)\right)=\xi\left(t\right)$ is the unique solution of \eqref{eq1.1} with initial value $\xi\left(r\right)$ at initial time $r$.  
\subsection{Exponential dichotomy}
\begin{assumption}\label{as2.5}
	We assume $T\left(t\right)$ satisfies the pseudo exponential dichotomy with exponents $\beta<\alpha$ and bound $K$. That's, there exists a continuous linear projection operator with finite rank $\Pi_{0u}\in\mathcal{L}\left(X_0\right)$ such that
	\begin{equation*}
		\Pi_{0u}T\left(t\right)=T\left(t\right)\Pi_{0u},~\forall{t\in\mathbb{R}}.
	\end{equation*}
	Denote $\Pi_{0s}=I_{X_0}-\Pi_{0u}$, $X_{0u}=\Pi_{0u}{X_0}$ and $X_{0s}=\Pi_{0s}{X_0}$, it holds
	\begin{equation}\label{eq2.9}
		\left\Vert{T_{A_{0u}}\left(t\right)\Pi_{0u}}\right\Vert\leq{Ke^{\alpha{t}}},~\forall{t\leq0}.
	\end{equation}
	\begin{equation}\label{eq2.10}
		\left\Vert{T_{A_{0s}}\left(t\right)\Pi_{0s}}\right\Vert\leq{Ke^{\beta{t}}},~\forall{t\geq0}.
	\end{equation}
	where $A_{0p}=A_0|_{X_{0p}}$ and ${T_{A_{0p}}\left(t\right)}$ is the $C_0$-semigroup generated by $A_{0p}$, ~$\forall{p\in\{u,s\}}$.
\end{assumption}
\begin{remark}\label{rem}
Assume $\alpha>\gamma\ge0\ge-\gamma>-\beta$. If $X_0$ is an infinite dimensional space and the spectrum of $A_0$ satisfies $\sigma\left(A_0\right)=\sigma^{0s}\cup\sigma^{0c}\cup\sigma^{0u}$, where $\sigma^{0s}={\left\{\lambda\in\sigma\left(A_0\right):\text{Re}\left(\lambda\right)\leq-\beta\right\}}$, $\sigma^{0c}={\left\{\lambda\in\sigma\left(A_0\right):\left|\text{Re}\left(\lambda\right)\right|\leq\gamma\right\}}$, $\sigma^{0u}={\left\{\lambda\in\sigma\left(A_0\right):\text{Re}\left(\lambda\right)\geq\alpha\right\}}$, and $A_0$ generates a strong continuous semigroup, then the exponential trichotomy holds, too (see \cite[page 267]{TG1993}. And if the spectrum of $A_0$ satisfies $\sigma\left(A_0\right)=\sigma^{0s}\cup\sigma^{0u}$, where $\sigma^{0s}$ and $\sigma^{0u}$ are the same as above, then the exponential dichotomy holds. Since $\sigma\left(A\right)=\sigma\left(A_0\right)$, the spectrum of $A$ can be split as $A_0$. 
\end{remark}

Since we only require the Hille-Yosida condition on $X_0$, one needs to extend the mentioned projections from $X_0$ to $X$ and outline the associated statement below (see \cite[Proposition 3.5]{PM_2009}.
\begin{lemma}\label{lem2.12}
	Let $\Pi_0:X_0\to{X_0}$  be a bounded linear projection satisfying
	\begin{equation*}
		\Pi_0{R_\lambda\left(A_0\right)}={R_\lambda\left(A_0\right)}\Pi_0,~\forall{\lambda>\vartheta},
	\end{equation*}
	and
	\begin{equation*}
		\Pi_0X_0\subset{D\left(A_0\right)}, ~\text{and}~{A_0|}_{\Pi_0X_0}~\text{is bounded}.
	\end{equation*}
	Then there exists a unique bounded projection $\Pi:X\to{X}$ such that
	\begin{itemize}
		\item [$\left(\mathrm{i}\right)$]${\Pi|}_{X_0}=\Pi_0$;
		\item [$\left(\mathrm{ii}\right)$]$\Pi\left(X\right)\subset{X_0}$;
		\item [$\left(\mathrm{iii}\right)$]$\Pi{R_\lambda\left(A\right)}={R_\lambda\left(A\right)}\Pi$,~$\forall{\lambda>\vartheta}$.
	\end{itemize}
	Moreover, for each $x\in{X}$, we have
	\begin{equation*}
		\Pi{x}=\lim_{\lambda\to+\infty}\Pi_0\lambda{R_\lambda\left(A\right)}x=\lim_{h\to0^+}\frac1{h}\Pi_0S\left(h\right)x,
	\end{equation*}
	where $S$ is the integrated semigroup generated by $A$.
\end{lemma}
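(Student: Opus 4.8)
The plan is to define $\Pi$ through the first limit formula, extract a closed form for it by the resolvent identity, and then read off all the stated properties from that closed form. Write $Y_0:=\Pi_0X_0$ and $B:=A_0|_{Y_0}$; by hypothesis $Y_0\subseteq D(A_0)$ and $B$ is bounded on $Y_0$, and since $X_0=\overline{D(A)}$ we have $R_\mu(A)X\subseteq D(A)\subseteq X_0$, so $\Pi_0R_\mu(A)$ maps $X$ boundedly into $Y_0$ for every $\mu>\vartheta$. Fixing $x\in X$ and $\mu>\vartheta$ and applying the resolvent identity $R_\lambda(A)=R_\mu(A)+(\mu-\lambda)R_\lambda(A)R_\mu(A)$, the fact that $R_\mu(A)x\in X_0$ lets me replace $R_\lambda(A)R_\mu(A)x$ by $R_\lambda(A_0)R_\mu(A)x$ and invoke the commutation hypothesis, which yields
\[
\Pi_0R_\lambda(A)x=z+(\mu-\lambda)R_\lambda(B)z,\qquad z:=\Pi_0R_\mu(A)x\in Y_0,
\]
where on $Y_0$ the resolvent reduces to $R_\lambda(B)=(\lambda I-B)^{-1}$. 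Multiplying by $\lambda$ and using $\lambda R_\lambda(B)=I+BR_\lambda(B)$ collapses the right-hand side to $\lambda R_\lambda(B)(\mu I-B)z$, and since $B$ is bounded, $\lambda R_\lambda(B)\to I$ in operator norm on $Y_0$. Hence the limit exists and
\[
\Pi x:=\lim_{\lambda\to+\infty}\Pi_0\lambda R_\lambda(A)x=(\mu I-A_0)\Pi_0R_\mu(A)x.
\]
This closed form is visibly a bounded linear operator with range in $Y_0\subseteq X_0$, which is property (ii), and its independence of $\lambda$ forces the right-hand side to be independent of $\mu$.

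The remaining properties drop out of this representation. For (i), if $x\in X_0$ then $\lambda R_\lambda(A)x\to x$ by Lemma \ref{lem2.3}(i), so continuity of $\Pi_0$ gives $\Pi x=\Pi_0x$. Since $\Pi x\in Y_0=\mathrm{range}(\Pi_0)$ and $\Pi_0$ is a projection, $\Pi(\Pi x)=\Pi_0(\Pi x)=\Pi x$, i.e.\ $\Pi^2=\Pi$. For (iii) I insert $R_\lambda(A)$ into the limit formula; resolvents commute and $R_\nu(A)x\in X_0$, so the commutation hypothesis moves $\Pi_0$ across $R_\lambda(A_0)$, and pulling the bounded operator $R_\lambda(A)$ out of the limit in $\nu$ gives $\Pi R_\lambda(A)=R_\lambda(A)\Pi$. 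For uniqueness, any competitor $\Pi'$ obeying (i)--(iii) satisfies $\Pi_0R_\lambda(A)x=\Pi'R_\lambda(A)x=R_\lambda(A_0)\Pi'x$ (using (i) on $R_\lambda(A)x\in X_0$ and then (iii)); multiplying by $\lambda$ and letting $\lambda\to+\infty$, the left side tends to $\Pi x$ while the right side tends to $\Pi'x$ by Lemma \ref{lem2.3}(i), since $\Pi'x\in X_0$, whence $\Pi'=\Pi$.

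For the integrated-semigroup formula I substitute the representation of $S(h)x$ from Lemma \ref{lem2.6}, namely $S(h)x=\mu\int_0^hT(s)R_\mu(A)x\,ds+(I-T(h))R_\mu(A)x$, apply $\Pi_0$, and let $h\to0^+$. The averaged term tends to $\Pi_0R_\mu(A)x$ by continuity, while $\tfrac1h(I-T(h))$ applied to $\Pi_0R_\mu(A)x\in Y_0\subseteq D(A_0)$ tends to $-A_0\Pi_0R_\mu(A)x$; here one uses that $\Pi_0$ commutes with $T(s)$, which follows from its commuting with $R_\lambda(A_0)$ through the Yosida approximation. Summing the two contributions returns $(\mu I-A_0)\Pi_0R_\mu(A)x=\Pi x$, which is the claimed second formula.

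I expect the crux to be the first step: recognising that boundedness of $A_0$ on $\Pi_0X_0$ is precisely the ingredient that makes $\lambda\Pi_0R_\lambda(A)x$ convergent, and carrying out the resolvent-identity manipulation to reach the closed form $\Pi x=(\mu I-A_0)\Pi_0R_\mu(A)x$. Once this is secured, properties (i)--(iii), the projection identity, uniqueness, and the $S(h)$ formula are all routine consequences of Lemma \ref{lem2.3}, the commutation hypothesis, and Lemma \ref{lem2.6}.
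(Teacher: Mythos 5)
Your argument is correct, and it supplies a proof that the paper itself does not contain: the paper states this lemma without proof, citing Proposition 3.5 of Magal and Ruan \cite{PM_2009}, so there is no in-paper argument to compare against. Your route---defining $\Pi$ by the resolvent limit, using the resolvent identity and the commutation hypothesis to extract the closed form $\Pi x=(\mu I-A_0)\Pi_0R_\mu(A)x$, and then reading off (i)--(iii), idempotency, uniqueness, and the $S(h)$ formula from Lemma \ref{lem2.3} and Lemma \ref{lem2.6}---is essentially the standard argument behind the cited proposition, and every step checks out.

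Two small points should be patched. First, you work with $B=A_0|_{Y_0}$ as a bounded operator \emph{on} $Y_0$, and you use $R_\lambda(B)=R_\lambda(A_0)|_{Y_0}$ together with $\lambda R_\lambda(B)\to I$ in norm; all of this presupposes that $Y_0$ is invariant under $R_\lambda(A_0)$ and under $A_0$, which you never verify. Both facts are one-line consequences of the commutation hypothesis: $R_\lambda(A_0)Y_0=R_\lambda(A_0)\Pi_0X_0=\Pi_0R_\lambda(A_0)X_0\subseteq Y_0$; and for $y\in D(A_0)$, writing $y=R_\lambda(A_0)w$ with $w=(\lambda I-A_0)y$ gives $\Pi_0y=R_\lambda(A_0)\Pi_0w\in D(A_0)$ and $A_0\Pi_0y=\Pi_0A_0y$, so for $y\in Y_0$ one gets $A_0y=A_0\Pi_0y=\Pi_0A_0y\in Y_0$. (Alternatively you can bypass $B$ altogether: the same algebra gives $\lambda\Pi_0R_\lambda(A)x=\lambda R_\lambda(A_0)\bigl(\mu I-A_0\bigr)z$ with $z=\Pi_0R_\mu(A)x\in D(A_0)$, and Lemma \ref{lem2.3}(i) applied to $(\mu I-A_0)z\in X_0$ already yields the strong limit; the boundedness of $A_0|_{\Pi_0X_0}$ is then needed only to conclude that $\Pi$ is a \emph{bounded} operator, e.g.\ via the closed graph theorem.) Second, in the final paragraph the averaged term of $\tfrac1h\Pi_0S(h)x$ tends to $\mu\Pi_0R_\mu(A)x$, not to $\Pi_0R_\mu(A)x$---you dropped the factor $\mu$ in the prose, although the total $(\mu I-A_0)\Pi_0R_\mu(A)x$ you state is the correct one. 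Neither issue affects the validity of the proof.
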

We use Lemma \ref{lem2.12} to state the decomposition on $X$. Denote $\Pi_u:X\to{X}$ the unique extension of $\Pi_{0u}$ and let $\Pi_s=I_X-\Pi_u$. Then we have for each ${p\in\left\{u,s\right\}}$,
\begin{equation*}
	\Pi_p{R_\lambda\left(A\right)}={R_\lambda\left(A\right)}\Pi_p,~\forall{\lambda>\vartheta},
\end{equation*} 
and
\begin{equation*}
	\Pi_p\left(X_0\right)\subset{X_0}.
\end{equation*}
Set
\begin{equation*}
	X_{0p}=\Pi_p\left(X_0\right),~X_p=\Pi_p\left(X\right),~\text{and}~A_p=A|_{X_p}.
\end{equation*}
Then we have $X_u=X_{0u}$. Moreover, we have
\begin{equation*}
	X_0=X_{0s}\oplus{X_{0u}},~\text{and}~X=X_{s}\oplus{X_{u}}.
\end{equation*}
Herein, $X_{0u}$ and $X_{0s}$ are called unstable subspace and stable subspace of $X_0$ respectively.  
For more details, see \cite[page 22]{PM_2009}.
Then we give the definition of mean-square random invariant manifolds and mean-square random invariant sets.
\begin{definition}
	Let $\mathcal{M}=\left\{\mathcal{M}\left(\tau\right)\subseteq{L^2\left(\Omega,\mathcal{F}_\tau;X_0\right)}:\tau\in\mathbb{R}\right\}$ be a famliy of subsets of the space $L^2\left(\Omega,\mathcal{F};X_0\right)$.
	\begin{itemize}
		\item [$\left(\mathrm{i}\right)$] $\mathcal{M}$ is called a mean-square random unstable invariant set of $\Phi$ in $X_0$ if $\mathcal{M}$ is invariant under $\Phi$ and there is a family $h^u=\left\{h^u\left(\cdot,\tau\right):\tau\in\mathbb{R}\right\}$ of mappings such that for every $\tau\in\mathbb{R}$, $h^u\left(x,\tau\right):D\left(h^u\left(x,\tau\right)\right)\subseteq{L^2\left(\Omega,\mathcal{F}_\tau;X_{0u}\right)}\to{L^2\left(\Omega,\mathcal{F}_\tau;X_{0s}\right)}$ is Lipschitz continuous and 
		\begin{equation*}
			\mathcal{M}\left(\tau\right)=\left\{x+h^u\left(x,\tau\right):x\in{D\left(h^u\left(x,\tau\right)\right)}\right\};
		\end{equation*}
		\item [$\left(\mathrm{ii}\right)$] $\mathcal{M}$ is called a mean-square random unstable invariant manifold of $\Phi$ in $X_0$ if $\mathcal{M}$ is invariant under $\Phi$ and there is a family $h^u=\left\{h^u\left(\cdot,\tau\right):\tau\in\mathbb{R}\right\}$ of mappings such that for every $\tau\in\mathbb{R}$, $h^u\left(x,\tau\right):{L^2\left(\Omega,\mathcal{F}_\tau;X_{0u}\right)}\to{L^2\left(\Omega,\mathcal{F}_\tau;X_{0s}\right)}$ is Lipschitz continuous and 
		\begin{equation*}
			\mathcal{M}\left(\tau\right)=\left\{x+h^u\left(x,\tau\right):x\in{L^2\left(\Omega,\mathcal{F}_\tau;X_{0u}\right)}\right\};
		\end{equation*}
		\item [$\left(\mathrm{iii}\right)$] $\mathcal{M}$ is called a mean-square random stable invariant set of $\Phi$ in $X_0$ if $\mathcal{M}$ is invariant under $\Phi$ and there is a family $h^s=\left\{h^s\left(\cdot,\tau\right):\tau\in\mathbb{R}\right\}$ of mappings such that for every $\tau\in\mathbb{R}$, $h^s\left(x,\tau\right):D\left(h^s\left(x,\tau\right)\right)\subseteq{L^2\left(\Omega,\mathcal{F}_\tau;X_{0s}\right)}\to{L^2\left(\Omega,\mathcal{F}_\tau;X_{0u}\right)}$ is Lipschitz continuous and 
		\begin{equation*}
			\mathcal{M}\left(\tau\right)=\left\{x+h^s\left(x,\tau\right):x\in{D\left(h^s\left(x,\tau\right)\right)}\right\};
		\end{equation*}
		\item [$\left(\mathrm{iv}\right)$] $\mathcal{M}$ is called a mean-square random stable invariant set of $\Phi$ in $X_0$ if $\mathcal{M}$ is invariant under $\Phi$ and there is a family $h^s=\left\{h^s\left(\cdot,\tau\right):\tau\in\mathbb{R}\right\}$ of mappings such that for every $\tau\in\mathbb{R}$, $h^s\left(x,\tau\right):{L^2\left(\Omega,\mathcal{F}_\tau;X_{0s}\right)}\to{L^2\left(\Omega,\mathcal{F}_\tau;X_{0u}\right)}$ is Lipschitz continuous and 
		\begin{equation*}
			\mathcal{M}\left(\tau\right)=\left\{x+h^s\left(x,\tau\right):x\in{L^2\left(\Omega,\mathcal{F}_\tau;X_{0s}\right)}\right\}.
		\end{equation*}
	\end{itemize}
\end{definition}

\section{Mean-square random unstable invariant manifolds}\label{sec3}
This section is dedicated to the existence of mean-square random unstable invariant manifolds of \eqref{eq1.1} through the Lyapunov-Perron method. Given $\tau\in\mathbb{R}$, denote by $\mathcal{C}^-_{\tau}$ the space of all $X_0$-valued $\mathcal{F}_t$-progressively measurable  processes $\xi\left(t\right), t\in(-\infty,\tau]$ such that $\xi:(-\infty,\tau]\to{L^2\left(\Omega,X_0\right)}$ is continuous and \begin{equation}\label{eq100}
	\underset{t\leq\tau}{\sup}\left(e^{-\gamma{t}}{\left\Vert\xi\left(t\right)\right\Vert}_{L^2\left(\Omega,X_0\right)}\right)<\infty,
\end{equation}
with norm
\begin{equation*}
	{\left\Vert\xi\left(t\right)\right\Vert}_{\mathcal{C}^-_\tau}=\underset{t\leq\tau}{\sup}\left(e^{-\gamma{t}}{\left\Vert\xi\left(t\right)\right\Vert}_{L^2\left(\Omega,X_0\right)}\right),~\forall{\xi\in{\mathcal{C}^-_{\tau}}}.
\end{equation*}
where $\gamma\in\left(\beta,\alpha\right)$. Notice that ${\mathcal{C}^-_\tau}$ is a Banach space.
To construct mean-square random unstable invariant manifolds of \eqref{eq1.1}, we need to find all solutions of \eqref{eq1.1} in the space ${\mathcal{C}^-_\tau}$.
\begin{lemma}\label{lem3.1}
	Let Assumption \ref{as1}, Assumption \ref{as2.1}, Assumption \ref{as2.3} and Assumption \ref{as2.5} be satisfied. $\xi\in{\mathcal{C}^-_\tau}$ for some $\tau\in\mathbb{R}$. Then $\xi$ is an integrated solution of \eqref{eq1.1} on $(\infty,\tau]$ if and only if there exists $x\in{L^2\left(\Omega,\mathcal{F}_\tau;X_{0u}\right)}$ such that for all $t\leq\tau$,
	\begin{equation}\label{eq3.2}
		\begin{split}
			\xi\left(t\right)=&T_{A_{0u}}\left(t-\tau\right)x-\int_t^\tau{T_{A_{0u}}\left(t-r\right)}\Pi_uF\left(\xi\left(r\right)\right)dr\\&-\int_t^\tau{T_{A_{0u}}\left(t-r\right)}\Pi_u\sigma\left(\xi\left(r\right)\right)dW\left(r\right)\\&+\lim_{\lambda\to+\infty}\int_{-\infty}^tT_{A_{0s}}\left(t-r\right)\lambda{R_\lambda\left(A_s\right)}\Pi_sF\left(\xi\left(r\right)\right)dr\\&+\lim_{\lambda\to+\infty}\int_{-\infty}^tT_{A_{0s}}\left(t-r\right)\lambda{R_\lambda\left(A_s\right)}\Pi_s\sigma\left(\xi\left(r\right)\right)dW\left(r\right).
		\end{split}    
	\end{equation}
	in $L^2\left(\Omega,X_0\right)$.
\end{lemma}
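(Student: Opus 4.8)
The plan is to prove the two implications separately, in each case reducing matters to the modified variation of constants formula \eqref{eq2.5} through the dichotomy splitting $X=X_s\oplus X_u$ furnished by Assumption \ref{as2.5} and Lemma \ref{lem2.12}. Throughout I would repeatedly use that $\Pi_u,\Pi_s$ commute with $T$ and with $R_\lambda(A)$, that $X_u=X_{0u}$ is finite dimensional so $A_{0u}$ is bounded and $T_{A_{0u}}$ extends to a group with $\lambda R_\lambda(A)\Pi_u\to\Pi_u$ (Lemma \ref{lem2.3}(i)), and that membership $\xi\in\mathcal{C}^-_\tau$ forces the growth bound $\|\xi(r)\|_{L^2(\Omega,X_0)}\le\|\xi\|_{\mathcal{C}^-_\tau}e^{\gamma r}$ with $\gamma\in(\beta,\alpha)$.

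For necessity, I would insert the defining identity $u(t,r,\xi(r))=\xi(t)$ into \eqref{eq2.5} with initial time $r$ and apply the two projections. On the unstable block the group property gives $T_{A_{0u}}(t-s)=T_{A_{0u}}(t-\tau)T_{A_{0u}}(\tau-s)$; evaluating at $\tau$, setting $x:=\Pi_u\xi(\tau)\in L^2(\Omega,\mathcal{F}_\tau;X_{0u})$, and solving backward yields the first two lines of \eqref{eq3.2}, the deterministic group factor passing through the It\^o integral since it is non-random. On the stable block I would send $r\to-\infty$: by \eqref{eq2.10} one has $\|T_{A_{0s}}(t-r)\Pi_s\xi(r)\|_{L^2(\Omega,X_0)}\le K\|\xi\|_{\mathcal{C}^-_\tau}e^{\beta t}e^{(\gamma-\beta)r}\to 0$, so the initial term disappears and the stable drift and diffusion integrals extend to $(-\infty,t]$, giving the last two lines of \eqref{eq3.2}.

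For sufficiency, I would fix $r\le\tau$ and verify that the $\xi$ defined by \eqref{eq3.2} obeys \eqref{eq2.5} on $[r,\tau]$ with datum $\xi(r)$; uniqueness in Theorem \ref{thm2.12} then forces $\xi(t)=u(t,r,\xi(r))$, and since $r$ is arbitrary, $\xi$ is an integrated solution on $(-\infty,\tau]$. Concretely, I would write \eqref{eq3.2} at $t$ and at $r$, apply $T_{A_{0u}}(t-r)$ to the unstable part and subtract so that the $\int_t^\tau$ and $\int_r^\tau$ contributions collapse into a single $\int_r^t$; on the stable part I would split $\int_{-\infty}^t=\int_{-\infty}^r+\int_r^t$ and pull $T_{A_{0s}}(t-r)$ out of the first piece by the semigroup property, recognizing the remaining $\int_{-\infty}^r$ terms as exactly the stable component of $\xi(r)$. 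Adding the two blocks and using $\lambda R_\lambda(A)\Pi_u\to\Pi_u$ to reinstate the $\lambda R_\lambda(A)$ factor on the finite dimensional unstable part reproduces \eqref{eq2.5}.

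The hard part will be the simultaneous control of the improper integrals at $-\infty$ and the two limits $\lambda\to+\infty$. For the drift, convergence and the interchange of limit and integral follow from $\int_{-\infty}^t Ke^{\beta(t-s)}L_1\|\xi\|_{\mathcal{C}^-_\tau}e^{\gamma s}\,ds=\frac{KL_1}{\gamma-\beta}\|\xi\|_{\mathcal{C}^-_\tau}e^{\gamma t}<\infty$, using $\gamma>\beta$, the uniform Hille--Yosida bound on $\lambda R_\lambda(A)$, and dominated convergence. For the diffusion the same estimate read through the It\^o isometry, i.e. applied to $\int_{-\infty}^t K^2e^{2\beta(t-s)}L_2^2\|\xi\|_{\mathcal{C}^-_\tau}^2 e^{2\gamma s}\,ds$, delivers square-integrability and convergence of the stochastic integral. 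Throughout one must keep every integrand $\mathcal{F}_s$-adapted so that the backward stochastic integrals in \eqref{eq3.2} are well defined, and check that their apparent future-noise dependence cancels against that carried by $x=\Pi_u\xi(\tau)$, leaving $\xi(t)$ genuinely $\mathcal{F}_t$-measurable.
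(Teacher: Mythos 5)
Your overall route coincides with the paper's: apply $\Pi_{0u},\Pi_{0s}$ to the variation-of-constants formula \eqref{eq2.5}, use the group property on the finite-rank unstable block to solve backward from $\tau$ with $x=\Pi_{0u}\xi\left(\tau\right)$, kill the stable initial term as the initial time goes to $-\infty$ via \eqref{eq2.10}, and for sufficiency reverse the algebra and invoke the uniqueness in Theorem \ref{thm2.12}; your sufficiency paragraph is in fact more detailed than the paper's one-line verification of \eqref{eq3.12}.

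However, there is a genuine gap exactly where you locate ``the hard part''. You justify the convergence of the two $\lim_{\lambda\to+\infty}\int_{-\infty}^{t}\left(\cdots\right)$ terms in \eqref{eq3.2} by dominating the integrands by $Ke^{\beta\left(t-r\right)}L_i\left\Vert\xi\right\Vert_{\mathcal{C}^-_\tau}e^{\gamma r}$, ``using the uniform Hille--Yosida bound on $\lambda R_\lambda\left(A\right)$'' together with dominated convergence (and the It\^o isometry for the noise term). No such uniform bound exists in this setting: Assumption \ref{as2.1} makes $A$ Hille--Yosida only on $X_0$, whereas $\Pi_s F\left(\xi\left(r\right)\right)$ and $\Pi_s\sigma\left(\xi\left(r\right)\right)$ take values in $X_s\subset X$, where $\left\Vert\lambda R_\lambda\left(A\right)\right\Vert_{\mathcal{L}\left(X\right)}$ is allowed to blow up as $\lambda\to+\infty$ --- in the paper's own example $\mathcal{A}$ is only $3/4$-almost sectorial, so this norm grows like $\lambda^{1/4}$ --- and $\lambda R_\lambda\left(A\right)x$ need not converge at all for $x\notin X_0$, since Lemma \ref{lem2.3}(i) applies only on $X_0$. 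Hence neither a $\lambda$-uniform dominating function nor a pointwise limit of integrands is available, and your dominated-convergence (resp.\ It\^o-isometry) argument cannot be run; the $\lambda$-limit exists only for the convolution as a whole, i.e.\ as $\left(S\diamond\cdot\right)$. This is precisely why the paper proves well-definedness through Assumption \ref{as2.3} and Lemma \ref{lem0}: one writes $\int_{-\infty}^{t}=\lim_{g\to-\infty}\int_{g}^{t}$ and applies the $C_\zeta$-estimate with $\zeta\in\left(\beta,\gamma\right)$, which is uniform in the interval length, as in \eqref{eq3.3} and \eqref{eq3.4}. The distinction is not cosmetic: these $C_\zeta$'s are exactly the constants entering the spectral-gap condition \eqref{eq3.14} of Theorem \ref{thm3.2}, whereas your bounds would produce $\left(\gamma-\beta\right)^{-1}$-type constants that the hypotheses do not support.
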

\begin{proof}
	\noindent {\bf Step 1: necessity.} Suppose $\xi\in{\mathcal{C}^-_\tau}$ with $\tau\in\mathbb{R}$ is an integrated solution of \eqref{eq1.1}. We prove that \eqref{eq3.2} holds ture. Since $\xi\in{\mathcal{C}^-_\tau}$, by \eqref{eq2.9} and Assumption \ref{as1}, the first two integrals on the right side of \eqref{eq2.3} are well-defined in $L^2\left(\Omega,X_0\right)$. For the third integral, according to Lemma \ref{lem0}, \eqref{eq_1} and \eqref{eq2.10}, for $\zeta\in\left(\beta,\gamma\right)$,
	\begin{equation}\label{eq3.3}
		\begin{split}
			&\lim_{\lambda\to+\infty}\int_{-\infty}^t{\left\Vert{T_{A_{0s}}\left(t-r\right)\lambda{R_\lambda\left(A_s\right)}\Pi_sF\left(\xi\left(r\right)\right)}\right\Vert}_{L^2\left(\Omega,X_0\right)}{dr}\\&~~=\lim_{\lambda\to+\infty}\lim_{g\to{-\infty}}\int_0^{t-g}{\left\Vert{T_{A_{0s}}\left(t-g-l\right)\lambda{R_\lambda\left(A_s\right)}\Pi_sF\left(\xi\left(l+g\right)\right)}\right\Vert}_{L^2\left(\Omega,X_0\right)}{dl}\\&~~\leq\lim_{g\to{-\infty}}L_1C_\zeta\underset{l\in[0,t-g]}{\sup}e^{\zeta\left(t-g-l\right)}{\left\Vert\xi\left(l+g\right)\right\Vert}_{L^2\left(\Omega,X_0\right)}\\&~~=L_1C_\zeta\underset{q\in(-\infty,t]}{\sup}e^{\zeta\left(t-q	\right)}{\left\Vert\xi\left(q\right)\right\Vert}_{L^2\left(\Omega,X_0\right)}\\&~~\leq{L_1}C_\zeta\underset{q\in(-\infty,t]}{\sup}e^{\zeta\left(t-q\right)}e^{\gamma{q}}{\left\Vert\xi\left(q\right)\right\Vert}_{\mathcal{C}^-_\tau}\\&~~={L_1}C_\zeta\underset{q\in(-\infty,t]}{\sup}e^{\left(\gamma-\zeta\right)q}e^{\left(\zeta-\gamma\right)t}e^{\gamma{t}}{\left\Vert\xi\left(q\right)\right\Vert}_{\mathcal{C}^-_\tau}\\&~~={L_1}C_\zeta{e^{\gamma{t}}}{\left\Vert\xi\right\Vert}_{\mathcal{C}^-_\tau}<\infty.
		\end{split}
	\end{equation}
	By \eqref{eq3.3}, for $t\leq\tau$,
	\begin{equation*}
		\lim_{\lambda\to+\infty}\int_{-\infty}^t{T_{A_{0u}}\left(t-r\right)\lambda{R_\lambda\left(A_s\right)}\Pi_sF\left(\xi\left(r\right)\right)}{dr}
	\end{equation*}
	is well-defined in $L^2\left(\Omega,X_0\right)$. For the forth integral, according to Lemma \ref{lem0}, \eqref{eq_2} and \eqref{eq2.9}, for $\zeta\in\left(\beta,\gamma\right)$,
	\begin{equation}\label{eq3.4}
		\begin{split}
			&{\left\Vert\lim_{\lambda\to+\infty}\int_{-\infty}^tT_{A_{0s}}\left(t-r\right)\lambda{R_\lambda\left(A_s\right)}\Pi_s\sigma\left(\xi\left(r\right)\right)dW\left(r\right)\right\Vert}_{L^2\left(\Omega,X_0\right)}\\&~~=\mathbb{E}\left[\left(\lim_{\lambda\to+\infty}\lim_{g\to{-\infty}}\int_0^{t-g}T_{A_{0s}}\left(t-l-g\right)\lambda{R_\lambda\left(A_s\right)}\Pi_s{\sigma\left(\xi\left(l+g\right)\right)}dW\left(l\right)\right)^2\right]^{\frac{1}{2}}\\&~~\leq\lim_{g\to{-\infty}}C_\zeta\underset{l\in[0,t-g]}{\sup}e^{\zeta\left(t-g-l\right)}\mathbb{E}\left[{\left\Vert\sigma\left(\xi\left(l+g\right)\right)\right\Vert}^2_{\mathcal{L}_2\left(Y_0,X\right)}\right]^{\frac{1}{2}}\\&~~\leq{L_2}C_\zeta\underset{q\in(-\infty,t]}{\sup}e^{\zeta\left(t-q	\right)}e^{\gamma{q}}{\left\Vert\xi\left(q\right)\right\Vert}_{\mathcal{C}^-_\tau}
			\\&~~={L_2}C_\zeta\underset{q\in(-\infty,t]}{\sup}e^{\left(\gamma-\zeta\right)q}e^{\left(\zeta-\gamma\right)t}e^{\gamma{t}}{\left\Vert\xi\left(q\right)\right\Vert}_{\mathcal{C}^-_\tau}\\&~~={L_2}C_\zeta{e^{\gamma{t}}}{\left\Vert\xi\right\Vert}_{\mathcal{C}^-_\tau}\\&~~<\infty.
		\end{split}
	\end{equation}
	By \eqref{eq3.4}, for $t\leq\tau$,
	\begin{equation*}
		\lim_{\lambda\to+\infty}\int_{-\infty}^tT_{A_{0s}}\left(t-r\right)\lambda{R_\lambda\left(A_s\right)}\Pi_s\sigma\left(\xi\left(r\right)\right)dW\left(r\right).
	\end{equation*}
	is well-defined in $L^2\left(\Omega,X_0\right)$. 
	
	Since $\xi\in{\mathcal{C}^-_\tau}$ is a solution of \eqref{eq1.1} on $(-\infty,\tau]$, by \eqref{eq2.5}, for all $s\leq{t}\leq\tau$, 
	\begin{equation}\label{eq3.5}
		\begin{split}
			\xi\left(t\right)=&T\left(t-s\right)\xi\left(s\right)+\lim_{\lambda\to+\infty}\int_s^tT\left(t-r\right)\lambda{R_\lambda\left(A\right)}F\left(\xi\left(r\right)\right)dr\\&+\lim_{\lambda\to+\infty}\int_s^tT\left(t-r\right)\lambda{R_\lambda\left(A\right)}\sigma\left(\xi\left(r\right)\right)dW\left(r\right).
		\end{split}
	\end{equation}
	Then we apply projections $\Pi_{0u}$ and $\Pi_{0s}$ to \eqref{eq3.5}. Note that $\Pi_u:X\to{X}$ is a bounded linear projection satisfying
	\begin{equation*}
		\Pi_u{R_\lambda\left(A\right)}={R_\lambda\left(A\right)}\Pi_u,~\forall{\lambda>\vartheta}.
	\end{equation*}
	Also, $A|_{\Pi_u\left(X\right)}=A|_{\Pi_u\left(X\right)}=A|_{X_u}=A_u$ satisfies Assumption \ref{as2.1} in $\Pi_u\left(X\right)=X_u$. Moreover, we know that $\Pi_u$ has a finite rank and $\Pi_u\left(X\right)\subset{X_0}$, which further indicates that  $A_0|_{\Pi_u\left(X_0\right)}=A_0|_{X_{0u}}=A_{0u}$ and $A|_{\Pi_u\left(X\right)}=A|_{X_u}=A_u$. Thus if for each $x\in{X_0}$ and each $t\leq\tau$, then the map $t\to\Pi_u{\xi\left(t\right)}$ solves
	\begin{equation*}
		d\Pi_u{\xi\left(t\right)}=A_{0u}\Pi_u{\xi\left(t\right)}dt+\Pi_u{F\left(\xi\left(t\right)\right)}dt+\Pi_u{\sigma\left(\xi\left(t\right)\right)}dW\left(t\right)
	\end{equation*}
	in $\Pi_u\left(X_0\right)=X_{0u}$, then for all $s\leq{t}\leq\tau$, 
	\begin{equation}\label{eq3.6}
		\begin{split}
			\Pi_u{\xi\left(t\right)}=&T_{A_0|_{\Pi_s\left(X_0\right)}}\left(t-s\right)\Pi_u{\xi\left(s\right)}+\left(S_{{A|}_{\Pi_u\left(X\right)}}\diamond{\Pi_u{\left(F\left(\xi\right)+\sigma\left(\xi\right)dW\right)}}\right)\left(r\right)\\=&T_{A_{0s}}\left(t-s\right)\Pi_u{\xi\left(s\right)}+\left(S_{{A}_{u}}\diamond{\Pi_u{\left(F\left(\xi\right)+\sigma\left(\xi\right)dW\right)}}\right)\left(r\right)\\=&T_{A_{0u}}\left(t-s\right)\Pi_u{\xi\left(s\right)}+\lim_{\lambda\to+\infty}\int_s^tT_{A_{0u}}\left(t-r\right)\lambda{R_\lambda\left(A_u\right)}\Pi_u{F\left(\xi\left(r\right)\right)}dr\\&+\lim_{\lambda\to+\infty}\int_s^tT_{A_{0u}}\left(t-r\right)\lambda{R_\lambda\left(A_u\right)}\Pi_u{\sigma\left(\xi\left(r\right)\right)}dW\left(r\right).
		\end{split}
	\end{equation}
	Since $\Pi_u$ is a extension of $\Pi_{0u}$ from $X_0$ to $X$ and $\Pi_u\left(X\right)\subset{X_0}$, by \eqref{eq3.6} and Lemma \ref{lem2.3}(i), we have
	\begin{equation}\label{eq3.7}
		\begin{split}
			\Pi_{0u}{\xi\left(t\right)}=&\Pi_u{\xi\left(t\right)}=T_{A_{0u}}\left(t-s\right)\Pi_{0u}{\xi\left(s\right)}+\int_s^tT_{A_{0u}}\left(t-r\right)\Pi_u{F\left(\xi\left(r\right)\right)}dr\\&+\int_s^tT_{A_{0u}}\left(t-r\right)\Pi_u{\sigma\left(\xi\left(r\right)\right)}dW\left(r\right).
		\end{split}    
	\end{equation}
	Through similar arguments, we have
	\begin{equation}\label{eq3.8}
		\begin{split}
			\Pi_{0s}{\xi\left(t\right)}=&T_{A_{0s}}\left(t-s\right)\Pi_{0s}{\xi\left(s\right)}+\lim_{\lambda\to+\infty}\int_s^tT_{A_{0s}}\left(t-r\right)\lambda{R_\lambda\left(A_s\right)}\Pi_s{F\left(\xi\left(r\right)\right)}dr\\&+\lim_{\lambda\to+\infty}\int_s^tT_{A_{0s}}\left(t-r\right)\lambda{R_\lambda\left(A_s\right)}\Pi_s{\sigma\left(\xi\left(r\right)\right)}dW\left(r\right).
		\end{split}
	\end{equation}
	In \eqref{eq3.7}, let $t=\tau$, 
	\begin{equation*}
		\begin{split}
			\Pi_{0u}{\xi\left(\tau\right)}=&T_{A_{0u}}\left(\tau-s\right)\Pi_{0u}{\xi\left(s\right)}+\int_s^\tau{T_{A_{0u}}}\left(\tau-r\right)\Pi_u{F\left(\xi\left(r\right)\right)}dr\\&+\int_s^\tau{T_{A_{0u}}}\left(\tau-r\right)\Pi_u{\sigma\left(\xi\left(r\right)\right)}dW\left(r\right).
		\end{split}  
	\end{equation*}
	Hence for all $s\leq{t}\leq\tau$, 
	\begin{equation}\label{eq3.9}
		\begin{split}
			T_{A_{0u}}\left(t-\tau\right)\Pi_{0u}\xi\left(\tau\right)=&T_{A_{0u}}\left(t-s\right)\Pi_{0u}\xi\left(s\right)+\int_s^\tau{T_{A_{0u}}}\left(t-r\right)\Pi_u{F\left(\xi\left(r\right)\right)}dr\\&+\int_s^\tau{T_{A_{0u}}}\left(t-r\right)\Pi_u{\sigma\left(\xi\left(r\right)\right)}dW\left(r\right).
		\end{split}    
	\end{equation}
	By \eqref{eq3.7} and \eqref{eq3.9}, 
	\begin{equation*}
		\begin{split}
			\Pi_{0u}\xi\left(t\right)=&T_{A_{0u}}\left(t-\tau\right)\Pi_{0u}\xi\left(\tau\right)-\int_s^\tau{T_{A_{0u}}}\left(t-r\right)\Pi_u{F\left(\xi\left(r\right)\right)}dr\\&-\int_s^\tau{T_{A_{0u}}}\left(t-r\right)\Pi_u{\sigma\left(\xi\left(r\right)\right)}dW\left(r\right)+\int_s^tT_{A_{0u}}\left(t-r\right)\Pi_u{F\left(\xi\left(r\right)\right)}dr\\& +\int_s^tT_{A_{0u}}\left(t-r\right)\Pi_u{\sigma\left(\xi\left(r\right)\right)}dW\left(r\right).
		\end{split}
	\end{equation*}
	Thus 
	\begin{equation}\label{eq3.10}
		\begin{split}
			\Pi_{0u}\xi\left(t\right)=&T_{A_{0u}}\left(t-\tau\right)\Pi_{0u}\xi\left(\tau\right)+\int_\tau^tT_{A_{0u}}\left(t-r\right)\Pi_u{F\left(\xi\left(r\right)\right)}dr\\&+\int_\tau^tT_{A_{0u}}\left(t-r\right)\Pi_u{\sigma\left(\xi\left(r\right)\right)}dW\left(r\right).
		\end{split}
	\end{equation}
	By \eqref{eq3.8} and \eqref{eq3.10}, for all $s\leq{t}\leq\tau$, we have
	\begin{equation}\label{eq3.11}
		\begin{split}
			\xi\left(t\right)=&T_{A_{0u}}\left(t-\tau\right)\Pi_{0u}\xi\left(\tau\right)+\int_\tau^tT_{A_{0u}}\left(t-r\right)\Pi_u{F\left(\xi\left(r\right)\right)}dr\\&+\int_\tau^tT_{A_{0u}}\left(t-r\right)\Pi_u{\sigma\left(\xi\left(r\right)\right)}dW\left(r\right)+T_{A_{0s}}\left(t-s\right)\Pi_{0s}{\xi\left(s\right)}\\&+\lim_{\lambda\to+\infty}\int_s^tT_{A_{0s}}\left(t-r\right)\lambda{R_\lambda\left(A_s\right)}\Pi_s{F\left(\xi\left(r\right)\right)}dr\\&+\lim_{\lambda\to+\infty}\int_s^tT_{A_{0s}}\left(t-r\right)\lambda{R_\lambda\left(A_s\right)}\Pi_s{\sigma\left(\xi\left(r\right)\right)}dW\left(r\right).
		\end{split}
	\end{equation}
	Since for all $s\leq{t}\leq\tau$, let $s\to{-\infty}$ in \eqref{eq3.11}, then by \eqref{eq2.10},
	\begin{equation*}\begin{split}
			{\left\Vert{T_{A_{0s}}\left(t-s\right)\Pi_{0s}{\xi\left(s\right)}}\right\Vert}_{L^2\left(\Omega,X_0\right)}&\leq{Ke^{\beta\left(t-s\right)}}{\left\Vert{\xi\left(s\right)}\right\Vert}_{L^2\left(\Omega,X_0\right)}\\&\leq{Ke^{\beta{t}}e^{\left(\gamma-\beta\right)s}}{\left\Vert{\xi\left(s\right)}\right\Vert}_{\mathcal{C}^-_\tau}\to0.
		\end{split}
	\end{equation*}
	Thus \eqref{eq3.2} is fulfilled as wished with $x=\Pi_{0u}\xi\left(\tau\right)$. 
	
	\noindent {\bf Step 2: sufficiency.} Suppose $\xi\in{\mathcal{C}^-_\tau}$ satisfies \eqref{eq3.2} for $t\leq\tau$. Since $\xi\in{\mathcal{C}^-_\tau}$, we know $\xi$ is an $X_0$-valued $\mathcal{F}_t$-progressively measurable process and $\xi\in{C\left((-\infty,\tau],L^2\left(\Omega,X_0\right)\right)}$. And from \eqref{eq3.2}, one can verify that for all $r\leq{t}\leq\tau$, \begin{equation}\label{eq3.12}
		\begin{split}
			\xi\left(t\right)=&T\left(t-r\right)\xi\left(r\right)+\lim_{\lambda\to+\infty}\int_r^tT\left(t-s\right)\lambda{R_\lambda\left(A\right)}F\left(\xi\left(s\right)\right)ds\\&+\lim_{\lambda\to+\infty}\int_r^tT\left(t-s\right)\lambda{R_\lambda\left(A\right)}\sigma\left(\xi\left(s\right)\right)dW\left(s\right).
		\end{split}
	\end{equation}
	From \eqref{eq3.12}, we get that for all $r\leq{t}\leq\tau$,
	\begin{equation*}
		\xi\left(t\right)=u\left(t,r,\xi\left(r\right)\right),
	\end{equation*}
	where $u$ is the integrated solution of \eqref{eq1.1} with initial condition $\xi\left(r\right)$ at initial time $r$. Then by the definition of integrated solutions of \eqref{eq1.1} on $(-\infty,\tau]$, $\xi$ is an integrated solution of \eqref{eq1.1} on $(-\infty,\tau]$.
\end{proof}
Now we are ready to construct mean-square unstable manifolds for \eqref{eq1.1}. By Lemma \ref{lem3.1}, if $\xi\in{\mathcal{C}^-_\tau}$ is an integrated solution of \eqref{eq1.1}, then $\xi$ must satisfy \eqref{eq3.2}. Take the conditional expectation $\mathbb{E}\left(\cdot|\mathcal{F}_t\right)$ on 
\eqref{eq3.2}, we get for $t\leq\tau$,
\begin{equation}\label{eq3.13}
	\begin{split}
		\xi\left(t\right)=&\mathbb{E}\left[T_{A_{0u}}\left(t-\tau\right)x-\int_t^\tau{T_{A_{0u}}\left(t-r\right)}\Pi_u{F}\left(\xi\left(r\right)\right)d\left(r\right)|\mathcal{F}_t\right]\\&+\lim_{\lambda\to+\infty}\int_{-\infty}^tT_{A_{0s}}\left(t-r\right)\lambda{R_\lambda\left(A_s\right)}\Pi_sF\left(\xi\left(r\right)\right)dr\\&+\lim_{\lambda\to+\infty}\int_{-\infty}^tT_{A_{0s}}\left(t-r\right)\lambda{R_\lambda\left(A_s\right)}\Pi_s\sigma\left(\xi\left(r\right)\right)dW\left(r\right).
	\end{split}
\end{equation}
in $L^2\left(\Omega,X_0\right)$ with $x\in{L^2\left(\Omega,\mathcal{F}_\tau;X_{0u}\right)}$.
For $\tau\in\mathbb{R}$ and $t\leq\tau$, set
\begin{equation*}
	\mathcal{M}^u\left(\tau\right)=\left\{\xi\left(\tau\right)\in{{L^2\left(\Omega,\mathcal{F}_\tau;X_0\right)}}:\xi\left(t\right)\in{\mathcal{C}^-_\tau} \text{and satisfies}~\eqref{eq3.13}\right\}.
\end{equation*}
We below prove that $ \mathcal{M}^u\left(\tau\right)$ is  invariant under $\Phi$ and it is given by a graph of  Lipschitz function.
\begin{theorem}\label{thm3.2}
	Assume Assumption \ref{as1}, Assumption \ref{as2.1}, Assumption \ref{as2.3} and Assumption \ref{as2.5} hold. If  $\beta<\zeta<\gamma<\alpha$ with
	\begin{equation}\label{eq3.14}
		K\left(L_1\left(\alpha-\gamma\right)^{-1}+L_1C_\zeta+L_2C_\zeta\right)<1,
	\end{equation}
	then there exists a mean-square random unstable invariant manifold for \eqref{eq1.1} given by \begin{equation*}
		\mathcal{M}^u\left(\tau\right)=\left\{x+h^u\left(x,\tau\right):x\in{L^2\left(\Omega,\mathcal{F}_\tau;X_{0u}\right)}\right\},
	\end{equation*} 
	where $h^u\left(\cdot,\tau\right):L^2\left(\Omega,\mathcal{F}_\tau;X_{0u}\right)\to{L^2\left(\Omega,\mathcal{F}_\tau;X_{0s}\right)}$ is a Lipschitz continuous map.
\end{theorem}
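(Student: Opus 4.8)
The plan is to recast the defining relation \eqref{eq3.13} as a fixed-point problem for a Lyapunov--Perron operator on $\mathcal{C}^-_\tau$, and then extract the graph representation, the Lipschitz bound and the invariance from the fixed-point structure. For fixed $x\in L^2(\Omega,\mathcal{F}_\tau;X_{0u})$ I would let $\mathcal{J}_{x,\tau}$ denote the right-hand side of \eqref{eq3.13}, regarded as a map sending a process $\xi\in\mathcal{C}^-_\tau$ to the process $t\mapsto(\mathcal{J}_{x,\tau}\xi)(t)$. The first task is to verify that $\mathcal{J}_{x,\tau}$ maps $\mathcal{C}^-_\tau$ into itself: the conditional expectation $\mathbb{E}(\cdot\,|\,\mathcal{F}_t)$ renders the unstable part $\mathcal{F}_t$-measurable, the two stable convolutions are adapted by construction, and the finiteness of the weighted norm follows from the estimates \eqref{eq3.3} and \eqref{eq3.4} together with the companion bound for the unstable drift obtained from \eqref{eq2.9}. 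Observe that the unstable stochastic integral appearing in \eqref{eq3.2} does not survive here, since its conditional expectation given $\mathcal{F}_t$ vanishes; this is precisely why \eqref{eq3.13} carries no unstable diffusion term.

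Next I would show that $\mathcal{J}_{x,\tau}$ is a contraction on $\mathcal{C}^-_\tau$ uniformly in $x$. For $\xi_1,\xi_2\in\mathcal{C}^-_\tau$ the $x$-term cancels, and using $\|\mathbb{E}(\cdot\,|\,\mathcal{F}_t)\|_{L^2(\Omega,X_0)}\le\|\cdot\|_{L^2(\Omega,X_0)}$ the unstable drift contributes at most $KL_1(\alpha-\gamma)^{-1}\|\xi_1-\xi_2\|_{\mathcal{C}^-_\tau}$ after integrating the bound $Ke^{\alpha(t-r)}$ against the weight $e^{\gamma r}$ over $r\in[t,\tau]$, while the two stable terms contribute $KL_1C_\zeta$ and $KL_2C_\zeta$ respectively, exactly as in the computations \eqref{eq3.3} and \eqref{eq3.4} applied to $\xi_1-\xi_2$. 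Summing yields the Lipschitz constant $K(L_1(\alpha-\gamma)^{-1}+L_1C_\zeta+L_2C_\zeta)$, which is strictly less than $1$ by \eqref{eq3.14}, so the Banach fixed-point theorem provides a unique $\xi(\cdot\,;x,\tau)\in\mathcal{C}^-_\tau$ with $\mathcal{J}_{x,\tau}\xi=\xi$.

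From the fixed point I would read off the manifold. Applying $\Pi_{0u}$ to \eqref{eq3.13} at $t=\tau$, and using $T_{A_{0u}}(0)=I$, the $\mathcal{F}_\tau$-measurability of $x$ and the fact that the stable convolutions take values in $X_{0s}$, one gets $\Pi_{0u}\xi(\tau;x,\tau)=x$; the only free component is therefore $\Pi_{0s}\xi(\tau;x,\tau)$, and I would set $h^u(x,\tau):=\Pi_{0s}\xi(\tau;x,\tau)$, so that $\xi(\tau;x,\tau)=x+h^u(x,\tau)$ and $\mathcal{M}^u(\tau)$ is exactly the claimed graph. For the Lipschitz bound, writing $\xi_i=\xi(\cdot\,;x_i,\tau)$ and subtracting the two fixed-point identities, the contraction estimate together with $e^{-\gamma t}\|T_{A_{0u}}(t-\tau)(x_1-x_2)\|_{L^2(\Omega,X_0)}\le Ke^{-\gamma\tau}\|x_1-x_2\|_{L^2(\Omega,X_0)}$ gives $\|\xi_1-\xi_2\|_{\mathcal{C}^-_\tau}\le(1-L)^{-1}Ke^{-\gamma\tau}\|x_1-x_2\|_{L^2(\Omega,X_0)}$, with $L<1$ the contraction constant; evaluating at $t=\tau$ and applying $\Pi_{0s}$ bounds $h^u(x_1,\tau)-h^u(x_2,\tau)$ by a constant multiple of $\|x_1-x_2\|_{L^2(\Omega,X_0)}$.

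The main obstacle is the invariance $\Phi(t,\tau)\mathcal{M}^u(\tau)\subseteq\mathcal{M}^u(\tau+t)$, where the conditional-expectation form \eqref{eq3.13} must be reconciled with the genuine dynamics $\Phi$. I would first show that any fixed point $\xi(\cdot\,;x,\tau)$ is in fact a backward integrated solution of \eqref{eq1.1} on $(-\infty,\tau]$, i.e. $u(t,r,\xi(r))=\xi(t)$ for all $r\le t\le\tau$. This is the delicate point: the stable projection of \eqref{eq3.13} already reproduces the adapted convolution form \eqref{eq3.8}, but the unstable projection is only given in conditionally-averaged form, so one must recover the hidden unstable stochastic integral—through the tower property of $\mathbb{E}(\cdot\,|\,\mathcal{F}_t)$ and the forward representation \eqref{eq2.5}—to match \eqref{eq3.7} on each subinterval. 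Once $\xi$ is known to be a backward integrated solution, I would fix $t_0>0$, form the process $\eta$ equal to $\xi$ on $(-\infty,\tau]$ and to the forward solution $u(\cdot,\tau,\xi(\tau))$ of Theorem \ref{thm2.12} on $[\tau,\tau+t_0]$, check that $\eta\in\mathcal{C}^-_{\tau+t_0}$ via the dichotomy bounds and continuity at $\tau$, and use the cocycle identity $\Phi(t_0+s,\tau)=\Phi(t_0,\tau+s)\circ\Phi(s,\tau)$ to conclude that $\eta$ is a backward integrated solution on $(-\infty,\tau+t_0]$. Invoking Lemma \ref{lem3.1} and then conditioning as in the passage from \eqref{eq3.2} to \eqref{eq3.13} shows $\eta$ satisfies \eqref{eq3.13} with horizon $\tau+t_0$ and $x=\Pi_{0u}\eta(\tau+t_0)$, whence $\eta(\tau+t_0)=\Phi(t_0,\tau)\xi(\tau)\in\mathcal{M}^u(\tau+t_0)$. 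Because $\eta$ is built by extending forward in time, the newly added noise increments remain $\mathcal{F}$-adapted; this is exactly the feature that fails in the forward (stable) construction and forces one there to settle for mean-square stable sets, and establishing this compatibility rigorously—rather than the routine contraction and Lipschitz estimates—is where the real work lies.
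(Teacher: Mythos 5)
Your first three paragraphs (well-definedness of the Lyapunov--Perron map, the contraction with constant $K\left(L_1\left(\alpha-\gamma\right)^{-1}+L_1C_\zeta+L_2C_\zeta\right)$, the graph representation $h^u\left(x,\tau\right)=\Pi_{0s}\xi\left(\tau;x,\tau\right)$ and its Lipschitz bound) follow essentially the paper's Steps 1--2 and are sound. The one point you treat too lightly there is adaptedness: $\mathcal{F}_t$-measurability of $\mathbb{E}\left(\cdot|\mathcal{F}_t\right)$ for each fixed $t$ is not enough; one needs a progressively measurable, $L^2$-continuous version of the conditional-expectation term, which the paper extracts from a backward stochastic equation via \cite[Lemma 2.1]{YS1991} (see \eqref{eq3.16}--\eqref{eq3.17}).

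The genuine gap is in your invariance argument. Its first step --- that every fixed point of \eqref{eq3.13} is a backward integrated solution of \eqref{eq1.1} on $(-\infty,\tau]$ --- is false in general, and the ``hidden unstable stochastic integral'' you hope to recover by the tower property cannot be recovered: conditioning annihilates it irreversibly. Concretely, take $X_0=X_{0u}=\mathbb{R}$ (so $\Pi_{0s}=0$), $A_0=a>0$, $F=0$, $\sigma\left(u\right)=\epsilon u$ with $\epsilon$ small enough that \eqref{eq3.14} holds. Then \eqref{eq3.13} reads $\xi\left(t\right)=e^{a\left(t-\tau\right)}\mathbb{E}\left[x|\mathcal{F}_t\right]$, and choosing $x=W\left(\tau\right)-W\left(\tau-1\right)$ gives the unique fixed point $\xi\left(t\right)=e^{a\left(t-\tau\right)}\left(W\left(t\right)-W\left(\tau-1\right)\right)$ on $\left[\tau-1,\tau\right]$. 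A backward solution of $du=au\,dt+\epsilon u\,dW$ would have to satisfy $\xi\left(t\right)=\xi\left(r\right)e^{\left(a-\epsilon^2/2\right)\left(t-r\right)+\epsilon\left(W\left(t\right)-W\left(r\right)\right)}$ for $r\leq t\leq\tau$, which fails: at $r=\tau-1$ the right side vanishes identically while the left side does not. Equivalently, if the fixed point solved \eqref{eq3.2}, then by Lemma \ref{lem3.1} the $\mathcal{F}_t$-measurable variable $\xi\left(t\right)$ would equal an expression containing genuinely future Wiener increments --- a measurability obstruction that generic terminal data $x$ cannot satisfy; it is the same adaptedness phenomenon that in Section \ref{sec4} prevents the set defined by \eqref{eq4.14} from being invariant. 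Here it means $\mathcal{M}^u\left(\tau\right)$ is strictly larger than the set of initial values of genuine backward solutions, so your reduction collapses the manifold you are supposed to study. The paper's Step 3 never passes through backward solutions: it glues the forward solution $u\left(\cdot,\tau,\xi\left(\tau\right)\right)$ onto $\xi$ as in \eqref{eq3.31} and verifies \eqref{eq3.13} with horizon $\tau+t_0$ directly --- for $t\leq\tau$ because the newly created unstable stochastic integral $\mathbb{E}\left[\int_t^\tau T_{A_{0u}}\left(t-r\right)\Pi_u\sigma\left(\tilde\xi\left(r\right)\right)dW\left(r\right)|\mathcal{F}_t\right]$ vanishes by the martingale property, and for $t\in(\tau,\tau+t_0]$ because every term in the forward representation \eqref{eq3.38} is already $\mathcal{F}_t$-measurable, so conditioning acts as the identity. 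Replacing your reduction step by this direct verification repairs the argument; the rest of your gluing scheme then goes through.
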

\begin{proof}We proceed it in three steps.
	
	\noindent {\bf Step 1.}
	For every $\tau\in\mathbb{R}$ and $x\in{L^2\left(\Omega,\mathcal{F}_\tau;X_{0u}\right)}$, we prove that \eqref{eq3.13} has a unique solution in ${\mathcal{C}^-_\tau}$. Denote $\mathcal{J}\left(\xi,x,\tau\right)\left(t\right)$ the right side of \eqref{eq3.13}, i.e.
	\begin{equation}\label{eq3.15}\begin{split}
			\mathcal{J}\left(\xi,x,\tau\right)\left(t\right)=&\mathbb{E}\left[T_{A_{0u}}\left(t-\tau\right)x-\int_t^\tau{T_{A_{0u}}\left(t-r\right)}\Pi_u{F}\left(\xi\left(r\right)\right)dr|\mathcal{F}_t\right]\\&+\lim_{\lambda\to+\infty}\int_{-\infty}^tT_{A_{0s}}\left(t-r\right)\lambda{R_\lambda\left(A_s\right)}\Pi_sF\left(\xi\left(r\right)\right)dr\\&+\lim_{\lambda\to+\infty}\int_{-\infty}^tT_{A_{0s}}\left(t-r\right)\lambda{R_\lambda\left(A_s\right)}\Pi_s\sigma\left(\xi\left(r\right)\right)dW\left(r\right).
		\end{split}
	\end{equation}
	We could find that $\xi$ is a solution to \eqref{eq3.13} in ${\mathcal{C}^-_\tau}$ if and only if $\xi$ is a fixed point of $\mathcal{J}\left(\xi,x,\tau\right)\left(t\right)$. It needs to show that $\mathcal{J}\left(\cdot,x,\tau\right)\left(t\right):{\mathcal{C}^-_\tau}\to{\mathcal{C}^-_\tau}$ is well defined. First we prove the process $\mathcal{J}\left(\cdot,x,\tau\right)\left(t\right),~t\leq\tau$ is predictable and continuous in $t$ in $L^2\left(\Omega,X_0\right)$. Denote $L^2_\mathcal{F}\left((-\infty,\tau],X_0\right)$ the set of all $X_0$-valued $\mathcal{F}_t$-progressively measurable processes. By \cite[Lemma 2.1]{YS1991}, there exists a unique pair $\left(y_1,y_2\right)\in{L^2_\mathcal{F}\left((-\infty,\tau],L^2\left(\Omega,X_0\right)\right)\times{L^2_\mathcal{F}\left((-\infty,\tau],\mathcal{L}_2\left(Y_0,X\right)\right)}}$ such that for all $t\leq\tau$,
	\begin{equation}
	\begin{split}\label{eq3.16}
	&	y_1\left(t\right)+\int_t^\tau{T_{A_{0u}}\left(t-r\right)}\Pi_u{F}\left(\xi\left(r\right)\right)dr+\int_t^\tau{T\left(t-r\right)}y_2\left(r\right)dW\left(r\right)\\
		&~~	=T_{A_{0u}}\left(t-\tau\right)x.
	\end{split}
	\end{equation}
	From \eqref{eq3.16}, we find that $y_1\left(t\right)$ is continuous in $t$ in $L^2\left(\Omega,X_0\right)$. Take the conditional expectation of \eqref{eq3.16}, we get that for $t\leq\tau$,
	\begin{equation}\label{eq3.17}
		y_1\left(t\right)=\mathbb{E}\left[T_{A_{0u}}\left(t-\tau\right)x-\int_t^\tau{T_{A_{0u}}\left(t-r\right)}\Pi_u{F}\left(\xi\left(r\right)\right)dr|\mathcal{F}_t\right].
	\end{equation}
	From \eqref{eq3.17}, the first term of the right side of \eqref{eq3.15} is a $X_0$-valued $\mathcal{F}_t$-progressively measurable process and continuous in $t$ in $L^2\left(\Omega,X_0\right)$. The last two terms are both $\mathcal{F}_t$-adapted and continuous in $t$ in $L^2\left(\Omega,X_0\right)$, so they are both $\mathcal{F}_t$-progressively measurable. Then we get that $\mathcal{J}\left(\xi,x,\tau\right)\left(t\right),~t\leq\tau$, is  $\mathcal{F}_t$-progressively measurable and continuous in $t$ in $L^2\left(\Omega,X_0\right)$. Secondly, we prove that $\mathcal{J}\left(\xi,x,\tau\right)\left(t\right)$ satisfies  \eqref{eq100}. We have, for $t\leq\tau$,
	\begin{equation}\begin{split}\label{eq3.18}
			& {\left\Vert{\mathcal{J}\left(\xi,x,\tau\right)\left(t\right)}\right\Vert}_{L^2\left(\Omega,X_0\right)}\\&~~\leq{\left\Vert{T_{A_{0u}}\left(t-\tau\right)x}\right\Vert}_{L^2\left(\Omega,X_0\right)}+\int_t^\tau{\left\Vert{T_{A_{0u}}\left(t-r\right)}\Pi_u{F}\left(\xi\left(r\right)\right)\right\Vert}_{L^2\left(\Omega,X_0\right)}dr\\&\quad+\lim_{\lambda\to+\infty}\int_{-\infty}^t{\left\Vert{T_{A_{0s}}\left(t-r\right)}\lambda{R_\lambda\left(A_s\right)}\Pi_s{F}\left(\xi\left(r\right)\right)\right\Vert}_{L^2\left(\Omega,X_0\right)}dr\\&\quad+{\left\Vert\lim_{\lambda\to+\infty}\int_{-\infty}^tT_{A_{0s}}\left(t-r\right)\lambda{R_\lambda\left(A_s\right)}\Pi_s\sigma\left(\xi\left(r\right)\right)dW\left(r\right)\right\Vert}_{L^2\left(\Omega,X_0\right)}.
		\end{split}
	\end{equation}
	For the first term on the right side of \eqref{eq3.18}, by \eqref{eq2.9}, we have, for $t\leq\tau$,
	\begin{equation}\label{eq3.19}
		{\left\Vert{T_{A_{0u}}\left(t-\tau\right)x}\right\Vert}_{L^2\left(\Omega,X_0\right)}\leq{Ke^{\alpha\left(t-\tau\right)}}{\left\Vert{x}\right\Vert}_{L^2\left(\Omega,X_0\right)}\leq{Ke^{\gamma\left(t-\tau\right)}}{\left\Vert{x}\right\Vert}_{L^2\left(\Omega,X_0\right)}.
	\end{equation}
	For the second term on the right side of \eqref{eq3.18}, by \eqref{eq2.9} and \eqref{eq_1}, we have, $t\leq\tau$,
	\begin{equation}\label{eq3.20}
		\begin{split}
			\int_t^\tau{\left\Vert{T_{A_{0u}}\left(t-r\right)}\Pi_u{F}\left(\xi\left(r\right)\right)\right\Vert}_{L^2\left(\Omega,X_0\right)}dr&\leq{KL_1}\int_t^\tau{e^{\alpha\left(t-r\right)}}{\left\Vert{\xi\left(r\right)}\right\Vert}_{L^2\left(\Omega,X_0\right)}dr\\&\leq{KL_1}{\left\Vert{\xi}\right\Vert}_{\mathcal{C}^{-}_\tau}\int_t^\tau{e^{\alpha{t}}}e^{\left(\gamma-\alpha\right)r}dr\\&\leq{KL_1}\left(\alpha-\gamma\right)^{-1}e^{\gamma{t}}{\left\Vert{\xi}\right\Vert}_{\mathcal{C}^{-}_\tau}.
		\end{split}
	\end{equation}
	Then it follows from \eqref{eq3.3}, \eqref{eq3.4}, \eqref{eq3.18}-\eqref{eq3.20} that  
	\begin{equation*}\begin{split}
			e^{-\gamma{t}}{\left\Vert{\mathcal{J}\left(\xi,x,\tau\right)\left(t\right)}\right\Vert}_{L^2\left(\Omega,X_0\right)}&\leq{K}e^{-\gamma\tau}{\left\Vert{x}\right\Vert}_{L^2\left(\Omega,X_0\right)}\\&+K{\left\Vert{\xi}\right\Vert}_{\mathcal{C}^{-}_\tau}\left(L_1\left(\alpha-\gamma\right)^{-1}+L_1C_\zeta+L_2C_\zeta\right).
		\end{split}
	\end{equation*}
	So we proved that $\mathcal{J}\left(\xi,x,\tau\right)\left(t\right)$ satisfies \eqref{eq100}, which means $\mathcal{J}\left(\xi,x,\tau\right)\left(t\right)\in{\mathcal{C}^-_\tau}$ and it further indicates that $\mathcal{J}\left(\cdot,x,\tau\right)\left(t\right):{\mathcal{C}^-_\tau}\to{\mathcal{C}^-_\tau}$ is well defined. Then we prove $\mathcal{J}\left(\cdot,x,\tau\right)\left(t\right):{\mathcal{C}^-_\tau}\to{\mathcal{C}^-_\tau}$ is a contraction. Given $\xi_1,\xi_2\in{\mathcal{C}^-_\tau}$, we have, for all $t\leq\tau$,
	\begin{equation}
		\label{eq3.21}
		\begin{split}
			&\left\Vert{\mathcal{J}\left(\xi_1,x,\tau\right)\left(t\right)-\mathcal{J}\left(\xi_2,x,\tau\right)\left(t\right)}\right\Vert_{L^2\left(\Omega,X_0\right)}\\&~~\leq\int_t^\tau{\left\Vert{T_{A_{0u}}\left(t-r\right)}\Pi_u\left(F\left(\xi_1\left(r\right)\right)-F\left(\xi_2\left(r\right)\right)\right)\right\Vert}_{L^2\left(\Omega,X_0\right)}dr\\&\quad+\lim_{\lambda\to+\infty}\int_{-\infty}^t{\left\Vert{T_{A_{0s}}\left(t-r\right)}\lambda{R_\lambda\left(A_s\right)}\Pi_s\left(F\left(\xi_1\left(r\right)\right)-F\left(\xi_2\left(r\right)\right)\right)\right\Vert}_{L^2\left(\Omega,X_0\right)}dr\\&\quad+{\left\Vert\lim_{\lambda\to+\infty}\int_{-\infty}^tT_{A_{0s}}\left(t-r\right)\lambda{R_\lambda\left(A_s\right)}\Pi_s\left(\sigma\left(\xi_1\left(r\right)\right)-\sigma\left(\xi_2\left(r\right)\right)\right))dW\left(r\right)\right\Vert}_{L^2\left(\Omega,X_0\right)}\\&~~=\mathcal{J}_1+\mathcal{J}_2+\mathcal{J}_3.
		\end{split}
	\end{equation}
	By \eqref{eq1} and \eqref{eq2.9}, we have, for $t\leq\tau$,
	\begin{equation}\begin{split}\label{eq3.22}
			\mathcal{J}_1&\leq{KL_1}\int_t^\tau{e^{\alpha\left(t-r\right)}}{\left\Vert{\xi_1\left(r\right)-\xi_2\left(r\right)}\right\Vert}_{L^2\left(\Omega,X_0\right)}dr\\&\leq{KL_1}{\left\Vert{\xi_1-\xi_2}\right\Vert}_{\mathcal{C}^{-}_\tau}\int_t^\tau{e^{\alpha{t}}}e^{\left(\gamma-\alpha\right)r}dr\\&\leq{KL_1}\left(\alpha-\gamma\right)^{-1}e^{\gamma{t}}{\left\Vert{\xi_1-\xi_2}\right\Vert}_{\mathcal{C}^{-}_\tau}.
		\end{split}    
	\end{equation}
	By Lemma \ref{lem0}, \eqref{eq1} and \eqref{eq2.10}, we have, for $t\leq\tau$ and $\zeta\in\left(\beta,\gamma\right)$,
	\begin{equation}\label{eq3.23}
		\begin{split}
			\mathcal{J}_2
			&\leq\lim_{g\to{-\infty}}L_1C_\zeta\underset{l\in[0,t-g]}{\sup}e^{\zeta\left(t-g-l\right)}{\left\Vert\xi_1\left(l+g\right)-\xi_2\left(l+g\right)\right\Vert}_{L^2\left(\Omega,X_0\right)}\\&=L_1C_\zeta\underset{q\in(-\infty,t]}{\sup}e^{\zeta\left(t-q	\right)}{\left\Vert\xi_1\left(q\right)-\xi_2\left(q\right)\right\Vert}_{L^2\left(\Omega,X_0\right)}\\&\leq{L_1}C_\zeta\underset{q\in(-\infty,t]}{\sup}e^{\zeta\left(t-q\right)}e^{\gamma{q}}{\left\Vert\xi_1\left(q\right)-\xi_2\left(q\right)\right\Vert}_{\mathcal{C}^-_\tau}\\&={L_1}C_\zeta\underset{q\in(-\infty,t]}{\sup}e^{\left(\gamma-\zeta\right)q}e^{\left(\zeta-\gamma\right)t}e^{\gamma{t}}{\left\Vert\xi_1\left(q\right)-\xi_2\left(q\right)\right\Vert}_{\mathcal{C}^-_\tau}\\&={L_1}C_\zeta{e^{\gamma{t}}}{\left\Vert\xi_1-\xi_2\right\Vert}_{\mathcal{C}^-_\tau}.
		\end{split}
	\end{equation}
	By Lemma \ref{lem0} and \eqref{eq2}, we have, for $t\leq\tau$ and $\zeta\in\left(\beta,\gamma\right)$,
	\begin{equation}\label{eq3.24}
		\begin{split}
			\mathcal{J}_3&={\left\Vert\lim_{\lambda\to+\infty}\int_{-\infty}^tT_{A_{0s}}\left(t-r\right)\lambda{R_\lambda\left(A_s\right)}\Pi_s\left(\sigma\left(\xi_1\left(r\right)\right)-\sigma\left(\xi_2\left(r\right)\right)\right)dW\left(r\right)\right\Vert}_{L^2\left(\Omega,X_0\right)}
			\\&\leq\lim_{g\to{-\infty}}C_\zeta\underset{l\in[0,t-g]}{\sup}e^{\zeta\left(t-g-l\right)}\mathbb{E}\left[{\left\Vert\sigma\left(\xi_1\left(l+g\right)\right)-\sigma\left(\xi_2\left(l+g\right)\right)\right\Vert}^2_{\mathcal{L}_2\left(Y_0,X\right)}\right]^{\frac{1}{2}}\\&\leq{L_2}C_\zeta\underset{q\in(-\infty,t]}{\sup}e^{\zeta\left(t-q	\right)}e^{\gamma{q}}{\left\Vert\xi_1\left(q\right)-\xi_2\left(q\right)\right\Vert}_{\mathcal{C}^-_\tau}\\&={L_2}C_\zeta\underset{q\in(-\infty,t]}{\sup}e^{\left(\gamma-\zeta\right)q}e^{\left(\zeta-\gamma\right)t}e^{\gamma{t}}{\left\Vert\xi_1\left(q\right)-\xi_2\left(q\right)\right\Vert}_{\mathcal{C}^-_\tau}\\&={L_2}C_\zeta{e^{\gamma{t}}}{\left\Vert\xi_1-\xi_2\right\Vert}_{\mathcal{C}^-_\tau}. 
		\end{split}
	\end{equation}
	By combining \eqref{eq3.22}, \eqref{eq3.23} and \eqref{eq3.24}, we have, for $t\leq\tau$, 
	\begin{equation*}
		\begin{split}
			&\left\Vert{\mathcal{J}\left(\xi_1,x,\tau\right)\left(t\right)-\mathcal{J}\left(\xi_2,x,\tau\right)\left(t\right)}\right\Vert_{L^2\left(\Omega,X_0\right)}\\&\leq{K\left(L_1\left(\alpha-\gamma\right)^{-1}+L_1C_\zeta+L_2C_\zeta\right)}e^{\gamma{t}}{\left\Vert\xi_1-\xi_2\right\Vert}_{\mathcal{C}^-_\tau}.
		\end{split} 
	\end{equation*}
	Then we have, for $t\leq\tau$,
	\begin{equation}\label{eq3.25}
		\begin{split}
			&\left\Vert{\mathcal{J}\left(\xi_1,x,\tau\right)\left(t\right)-\mathcal{J}\left(\xi_2,x,\tau\right)\left(t\right)}\right\Vert_{\mathcal{C}^-_\tau}\\&\leq{K\left(L_1\left(\alpha-\gamma\right)^{-1}+L_1C_\zeta+L_2C_\zeta\right)}{\left\Vert\xi_1-\xi_2\right\Vert}_{\mathcal{C}^-_\tau}.
		\end{split} 
	\end{equation}
	By the conditions of the theorem, $\mathcal{J}\left(\cdot,x,\tau\right)\left(t\right):{\mathcal{C}^-_\tau}\to{\mathcal{C}^-_\tau}$ is a contraction. From the contraction mapping principle, for any given $\tau\in\mathbb{R}$ and  $x\in{L^2\left(\Omega,\mathcal{F}_\tau;X_{0u}\right)}$, $\mathcal{J}\left(\cdot,x,\tau\right)\left(t\right)$ has a unique fixed point $\overline{\xi}\left(x\right)$ in $\mathcal{C}^-_\tau$. That is, for every $t\leq\tau$, $\overline\xi\left(x\right)$ is the unique solution of \eqref{eq3.15} and
	\begin{equation*}
		\mathcal{J}\left(\overline\xi, x,\tau\right)=\overline\xi\left(x\right).
	\end{equation*}
	Let $K\left(L_1\left(\alpha-\gamma\right)^{-1}+L_1C_\zeta+L_2C_\zeta\right)=\eta$. By \eqref{eq3.25}, for $x_1,x_2\in{L^2\left(\Omega,\mathcal{F}_\tau;X_{0u}\right)}$, we have, for $t\leq\tau$,
	\begin{equation}\label{eq3.26}
		\begin{split}
			&\left\Vert{\mathcal{J}\left(\overline\xi,x_1,\tau\right)\left(t\right)-\mathcal{J}\left(\overline\xi,x_2,\tau\right)\left(t\right)}\right\Vert_{\mathcal{C}^-_\tau}\\&~~=\left\Vert{\overline\xi\left(x_1\right)-\overline\xi\left(x_2\right)}\right\Vert_{\mathcal{C}^-_\tau}\\&~~\leq\underset{t\leq\tau}{\sup}~e^{-\gamma{t}}\left\Vert{T_{A_{0u}}\left(t-\tau\right)\left(x_1-x_2\right)}\right\Vert_{L^2\left(\Omega,X_0\right)}+\eta{\left\Vert\overline\xi\left(x_1\right)-\overline\xi\left(x_2\right)\right\Vert}_{\mathcal{C}^-_\tau}\\&~~\leq{K}e^{-\alpha\tau}\underset{t\leq\tau}{\sup}~e^{\left(\alpha-\gamma\right){t}}\left\Vert{x_1-x_2}\right\Vert_{L^2\left(\Omega,X_0\right)}+\eta{\left\Vert\overline\xi\left(x_1\right)-\overline\xi\left(x_2\right)\right\Vert}_{\mathcal{C}^-_\tau}\\&~~\leq{Ke^{-\gamma\tau}}\left\Vert{x_1-x_2}\right\Vert_{L^2\left(\Omega,X_0\right)}+\eta{\left\Vert\overline\xi\left(x_1\right)-\overline\xi\left(x_2\right)\right\Vert}_{\mathcal{C}^-_\tau}.
		\end{split}
	\end{equation}
	Thus by \eqref{eq3.26}, for all $x_1,x_2\in{L^2\left(\Omega,\mathcal{F}_\tau;X_{0u}\right)}$, we have
	\begin{equation}\label{eq3.27}
		{\left\Vert{\overline\xi\left(x_1\right)\left(\tau\right)-\overline\xi\left(x_2\right)\left(\tau\right)}\right\Vert}_{L^2\left(\Omega,X_0\right)}\leq{K}\left(1-\eta\right)^{-1}\left\Vert{x_1-x_2}\right\Vert_{L^2\left(\Omega,X_0\right)}.  
	\end{equation}
	\noindent {\bf Step 2.} We prove that the mean-square unstable invariant manifold is given by a graph of a Lipschitz continuous map.
	Let $h^u\left(x,\tau\right)=\Pi_{0s}\overline\xi\left(x\right)\left(\tau\right)$. Let $s\to{-\infty}$ and $t=\tau$ in \eqref{eq3.8}, we have
	\begin{equation}\label{eq3.28}
		\begin{split}
			h^u\left(x,\tau\right)=&\lim_{\lambda\to+\infty}\int_{-\infty}^\tau{T_{A_{0s}}}\left(\tau-r\right)\lambda{R_\lambda\left(A_s\right)}\Pi_s{F\left(\overline\xi\left(x\right)\left(r\right)\right)}dr\\&+\lim_{\lambda\to+\infty}\int_{-\infty}^\tau{T_{A_{0s}}}\left(\tau-r\right)\lambda{R_\lambda\left(A_s\right)}\Pi_s{\sigma\left(\overline\xi\left(x\right)\left(r\right)\right)}dW\left(r\right).
		\end{split}
	\end{equation}
	Indeed, by \eqref{eq3.23}, \eqref{eq3.24} and \eqref{eq3.27}, it follows that for all $x_1,x_2\in{L^2\left(\Omega,\mathcal{F}_\tau;X_{0u}\right)}$, 
	\begin{equation}\label{eq3.29}
		{\left\Vert{h^u\left(x_1,\tau\right)-h^u\left(x_2,\tau\right)}\right\Vert}_{L^2\left(\Omega,X_0\right)}\leq{KC_\zeta\left(L_1+L_2\right)}{\left(1-\eta\right)^{-1}}\left\Vert{x_1-x_2}\right\Vert_{L^2\left(\Omega,X_0\right)}.
	\end{equation}
	By the definition of $h^u\left(\cdot,\tau\right)$ and \eqref{eq3.29}, we get that $h^u\left(\cdot,\tau\right):L^2\left(\Omega,\mathcal{F}_\tau;X_{0u}\right)\to{L^2\left(\Omega,\mathcal{F}_\tau;X_{0s}\right)}$ is a Lipschitz continuous map. 
	Then by \eqref{eq3.15} and \eqref{eq3.28}, we have for all $\tau\in\mathbb{R}$ and $x\in{L^2\left(\Omega,\mathcal{F}_\tau;X_{0u}\right)}$,
	\begin{equation}\label{eq3.30}
		\overline\xi\left(x\right)\left(\tau\right)=x+h^u\left(x,\tau\right).
	\end{equation}
	By the definition of $\mathcal{M}^u\left(\tau\right)$ and Lemma \ref{lem3.1}, $\xi\left(\tau\right)\in\mathcal{M}^u\left(\tau\right)$ if and only if there exists $x\in
	{L^2\left(\Omega,\mathcal{F}_\tau;X_{0u}\right)}$ such that $\xi\left(\tau\right)=x+h^u\left(x,\tau\right)$. Therefore, we have
	\begin{equation*}
		\mathcal{M}^u\left(\tau\right)=\left\{x+h^u\left(x,\tau\right):x\in{L^2\left(\Omega,\mathcal{F}_\tau;X_{0u}\right)}\right\}.
	\end{equation*}
	\noindent {\bf Step 3.} We prove that $\mathcal{M}^u\left(\tau\right)$ is invariant under $\Phi$. By Definition \ref{inv}, we need to show that for each $\xi\left(\tau\right)\in\mathcal{M}^u\left(\tau\right)$ and $t_0>0$, $\Phi\left(t_0,\tau\right)\xi\left(\tau\right)\in\mathcal{M}^u\left(\tau+t_0\right)$. By the definition of $\mathcal{M}^u\left(\tau\right)$, for $\xi\left(\tau\right)\in\mathcal{M}^u\left(\tau\right)$, $\xi\in\mathcal{C}^-_\tau$ and satisfies \eqref{eq3.13} on $(-\infty,\tau]$. Let
	\begin{equation}\label{eq3.31}
		\tilde\xi\left(t\right)=\left\{
		\begin{array}{l}
			u\left(t,\tau,\xi\left(\tau\right)\right), ~\tau\leq{t}\leq\tau+t_0, \\
			\xi\left(t\right),~t<\tau. 	\\
		\end{array}\right.
	\end{equation}
	We show that $\tilde\xi\left(\tau+t_0\right)\in\mathcal{M}^u\left(\tau+t_0\right)$. Since $\xi\in\mathcal{C}^-_\tau$, it is easy to get that $\tilde\xi\in\mathcal{C}^-_{\tau+t_0}$. Then we prove that $\xi$ satisfies \eqref{eq3.13} on $(-\infty,\tau+t_0]$, which means, for all $t\leq{t_0}+\tau$, 
	\begin{equation}\label{eq3.32}
		\begin{split}
			\tilde\xi\left(t\right)=&\mathbb{E}\left[T_{A_{0u}}\left(t-\tau-t_0\right)x-\int_t^{\tau+t_0}{T_{A_{0u}}\left(t-r\right)}\Pi_u{F}\left(\tilde\xi\left(r\right)\right)d\left(r\right)|\mathcal{F}_t\right]\\&+\lim_{\lambda\to+\infty}\int_{-\infty}^tT_{A_{0s}}\left(t-r\right)\lambda{R_\lambda\left(A_s\right)}\Pi_sF\left(\tilde\xi\left(r\right)\right)dr\\&+\lim_{\lambda\to+\infty}\int_{-\infty}^tT_{A_{0s}}\left(t-r\right)\lambda{R_\lambda\left(A_s\right)}\Pi_s\sigma\left(\tilde\xi\left(r\right)\right)dW\left(r\right)  
		\end{split}
	\end{equation}
	in $L^2\left(\Omega,X_0\right)$ with $x=\Pi_{0u}\tilde\xi\left(\tau+t_0\right)\in{L^2\left(\Omega,\mathcal{F}_{\tau+t_0};X_{0u}\right)}$. By \eqref{eq3.31} and \eqref{eq2.5}, we have
	\begin{equation}\begin{split}\label{eq3.33}
			\tilde\xi\left(\tau+t_0\right)=&T\left(t_0\right)\xi\left(\tau\right)+\lim_{\lambda\to+\infty}\int_\tau^{\tau+t_0}T\left(\tau+t_0-r\right)\lambda{R_\lambda\left(A\right)}F\left(\tilde\xi\left(r\right)\right)dr\\&+\lim_{\lambda\to+\infty}\int_\tau^{\tau+t_0}T\left(\tau+t_0-r\right)\lambda{R_\lambda\left(A\right)}\sigma\left(\tilde\xi\left(r\right)\right)dW\left(r\right).
		\end{split}
	\end{equation}
	Then we use the same method of getting \eqref{eq3.7} to apply $\Pi_{0u}$ to \eqref{eq3.33}, we have
	\begin{equation}\label{eq3.34}
		\begin{split}
			x=&\Pi_{0u}\tilde\xi\left(\tau+t_0\right)=T\left(t_0\right)\Pi_{0u}\xi\left(\tau\right)+\int_\tau^{\tau+t_0}T\left(\tau+t_0-r\right)\Pi_u{F}\left(\tilde\xi\left(r\right)\right)dr\\&+\int_\tau^{\tau+t_0}T\left(\tau+t_0-r\right)\Pi_u\sigma\left(\tilde\xi\left(r\right)\right)dW\left(r\right).   
		\end{split}
	\end{equation}
	Then \eqref{eq3.32} can be rewritten as, for $t\leq\tau+t_0$,
	\begin{equation}\label{eq3.35}
		\begin{split}
			\tilde\xi\left(t\right)=&\mathbb{E}\left[T_{A_{0u}}\left(t-\tau\right)\Pi_{0u}\xi\left(\tau\right)-\int_t^\tau{T_{A_{0u}}\left(t-r\right)}\Pi_u{F}\left(\tilde\xi\left(r\right)\right)d\left(r\right)|\mathcal{F}_t\right]\\&-\mathbb{E}\left[\int_t^\tau{T_{A_{0u}}\left(t-r\right)}\Pi_u\sigma\left(\tilde\xi\left(r\right)\right)dW\left(r\right)|\mathcal{F}_t\right]\\&+\lim_{\lambda\to+\infty}\int_{-\infty}^tT_{A_{0s}}\left(t-r\right)\lambda{R_\lambda\left(A_s\right)}\Pi_sF\left(\tilde\xi\left(r\right)\right)dr\\&+\lim_{\lambda\to+\infty}\int_{-\infty}^tT_{A_{0s}}\left(t-r\right)\lambda{R_\lambda\left(A_s\right)}\Pi_s\sigma\left(\tilde\xi\left(r\right)\right)dW\left(r\right).
		\end{split}
	\end{equation}
	Since $\xi\in\mathcal{C}^-_\tau$ satisfies \eqref{eq3.13} on $(-\infty,\tau]$, then $\tilde\xi$ satisfies \eqref{eq3.35} for $t\leq\tau$. It remains to show that $\tilde\xi$ satisfies \eqref{eq3.35} for $t>\tau$. By \eqref{eq3.31} and \eqref{eq2.5}, for $t>\tau$,
	\begin{equation}\label{eq3.36}
		\begin{split}
			\tilde\xi\left(t\right)=&T\left(t-\tau\right)\xi\left(\tau\right)+\lim_{\lambda\to+\infty}\int_\tau^tT\left(t-r\right)\lambda{R_\lambda\left(A\right)}F\left(\tilde\xi\left(r\right)\right)dr\\&+\lim_{\lambda\to+\infty}\int_\tau^tT\left(t-r\right)\lambda{R_\lambda\left(A\right)}\sigma\left(\tilde\xi\left(r\right)\right)dW\left(r\right).    
		\end{split}
	\end{equation}
	Since $\xi\in\mathcal{C}^-_\tau$ satisfies \eqref{eq3.13} with $x=\Pi_{0u}\xi\left(\tau\right)$ on $(-\infty,\tau]$, let $t=\tau$ in \eqref{eq3.13}, we have,
	\begin{equation}\label{eq3.37}
		\begin{split}
			\xi\left(\tau\right)=&\Pi_{0u}\xi\left(\tau\right)+\lim_{\lambda\to+\infty}\int_{-\infty}^\tau{T_{A_{0s}}}\left(\tau-r\right)\lambda{R_\lambda\left(A_s\right)}\Pi_sF\left(\xi\left(r\right)\right)dr\\&+\lim_{\lambda\to+\infty}\int_{-\infty}^\tau{T_{A_{0s}}}\left(\tau-r\right)\lambda{R_\lambda\left(A_s\right)}\Pi_s\sigma\left(\xi\left(r\right)\right)dW\left(r\right)\\=&\Pi_{0u}\xi\left(\tau\right)+\lim_{\lambda\to+\infty}\int_{-\infty}^\tau{T_{A_{0s}}}\left(\tau-r\right)\lambda{R_\lambda\left(A_s\right)}\Pi_sF\left(\tilde\xi\left(r\right)\right)dr\\&+\lim_{\lambda\to+\infty}\int_{-\infty}^\tau{T_{A_{0s}}}\left(\tau-r\right)\lambda{R_\lambda\left(A_s\right)}\Pi_s\sigma\left(\tilde\xi\left(r\right)\right)dW\left(r\right).
		\end{split}
	\end{equation}
	By \eqref{eq3.36}, \eqref{eq3.37} and Lemma \ref{lem2.3}(i), we have, for $t>\tau$,
	\begin{equation}\label{eq3.38}
		\begin{split}
			\tilde\xi\left(t\right)=&T\left(t-\tau\right)\Pi_{0u}\xi\left(\tau\right)+\lim_{\lambda\to+\infty}\int_{-\infty}^\tau{T_{A_{0s}}}\left(t-r\right)\lambda{R_\lambda\left(A_s\right)}\Pi_sF\left(\tilde\xi\left(r\right)\right)dr\\&+\lim_{\lambda\to+\infty}\int_{-\infty}^\tau{T_{A_{0s}}}\left(t-r\right)\lambda{R_\lambda\left(A_s\right)}\Pi_s\sigma\left(\tilde\xi\left(r\right)\right)dW\left(r\right)\\&+\lim_{\lambda\to+\infty}\int_\tau^tT\left(t-r\right)\lambda{R_\lambda\left(A\right)}F\left(\tilde\xi\left(r\right)\right)dr\\&+\lim_{\lambda\to+\infty}\int_\tau^tT\left(t-r\right)\lambda{R_\lambda\left(A\right)}\sigma\left(\tilde\xi\left(r\right)\right)dW\left(r\right)\\=&T\left(t-\tau\right)\Pi_{0u}\xi\left(\tau\right)+\lim_{\lambda\to+\infty}\int_{-\infty}^t{T_{A_{0s}}}\left(t-r\right)\lambda{R_\lambda\left(A_s\right)}\Pi_sF\left(\tilde\xi\left(r\right)\right)dr\\&+\lim_{\lambda\to+\infty}\int_{-\infty}^t{T_{A_{0s}}}\left(t-r\right)\lambda{R_\lambda\left(A_s\right)}\Pi_s\sigma\left(\tilde\xi\left(r\right)\right)dW\left(r\right)\\&+\int_\tau^tT_{A_{0u}}\left(t-r\right)\Pi_uF\left(\tilde\xi\left(r\right)\right)dr+\int_\tau^tT_{A_{0u}}\left(t-r\right)\Pi_u\sigma\left(\tilde\xi\left(r\right)\right)dW\left(r\right). 
		\end{split}
	\end{equation}
	Since $\tilde\xi\left(t\right)=u\left(t,\tau,\xi\left(\tau\right)\right)$ is $\mathcal{F}_t$-adapted for $t>\tau$, all terms of \eqref{eq3.38} are $\mathcal{F}_t$-measurable for $t>\tau$, so for $t>\tau$, take $\mathbb{E}\left(\cdot|\mathcal{F}_t\right)$ on \eqref{eq3.38}, we get \eqref{eq3.35}. So far we proved $\tilde\xi$ satisfies \eqref{eq3.35}, which means it satisfies \eqref{eq3.32} and \eqref{eq3.13} as well for all $t\leq\tau+t_0$. So we get $\tilde\xi\in\mathcal{C}^-_{\tau+t_0}$ satisfies \eqref{eq3.13} on $(-\infty,\tau+t_0]$, by the definition of $\mathcal{M}^u\left(\tau\right)$, we get $\tilde\xi\left(\tau+t_0\right)\in\mathcal{M}^u\left(\tau+t_0\right)$. By \eqref{eq3.31}, 
	\begin{equation*}
		\Phi\left(t_0,\tau\right)\left(\xi\left(\tau\right)\right)=u\left(\tau+t_0,\tau,\xi\left(\tau\right)\right)=\tilde\xi\left(\tau+t_0\right).
	\end{equation*}
	So $\Phi\left(t_0,\tau\right)\left(\xi\left(\tau\right)\right)\in\mathcal{M}^u\left(\tau+t_0\right)$, implying $\Phi\left(t_0,\tau\right)\mathcal{M}^u\left(\tau\right)\subseteq{\mathcal{M}^u\left(\tau+t_0\right)}$ for all $t_0>0$. So $\mathcal{M}^u\left(\tau\right)$ is invariant under $\Phi$.
\end{proof}
\section{Mean-square random stable invariant sets}\label{sec4}
This section is dedicated to the existence of mean-square random stable sets of \eqref{eq1.1} through the same method. Given $\tau\in\mathbb{R}$, denote by $\mathcal{C}^+_{\tau}$ the space of all $X_0$-valued $\mathcal{F}_t$-progressively measurable  processes $\xi\left(t\right):t\in[\tau,+\infty)$ such that $\xi:[\tau,+\infty)\to{L^2\left(\Omega,X_0\right)}$ is continuous and \begin{equation}\label{eq4.1}
	\underset{t\geq\tau}{\sup}\left(e^{-\gamma{t}}{\left\Vert\xi\left(t\right)\right\Vert}_{L^2\left(\Omega,X_0\right)}\right)<\infty,
\end{equation}
with norm
\begin{equation*}
	{\left\Vert\xi\left(t\right)\right\Vert}_{\mathcal{C}^+_\tau}=\underset{t\geq\tau}{\sup}\left(e^{-\gamma{t}}{\left\Vert\xi\left(t\right)\right\Vert}_{L^2\left(\Omega,X_0\right)}\right),~\forall{\xi\in{\mathcal{C}^+_{\tau}}}.
\end{equation*}
where $\gamma\in\left(\beta,\alpha\right)$. Notice that ${\mathcal{C}^+_\tau}$ is a Banach space.
To construct mean-square random stable invariant sets of \eqref{eq1.1}, we need to find all solutions of \eqref{eq1.1} in the space ${\mathcal{C}^+_\tau}$. 
\begin{lemma}\label{lem4.1}
	Let Assumption \ref{as1}, Assumption \ref{as2.1}, Assumption \ref{as2.3} and Assumption \ref{as2.5} be satisfied. $\xi\in{\mathcal{C}^+_\tau}$ for some $\tau\in\mathbb{R}$. Then $\xi$ is an integrated solution of \eqref{eq1.1} on $[\tau,+\infty)$ if and only if there exists $x\in{L^2\left(\Omega,\mathcal{F}_\tau;X_{0s}\right)}$ such that for all $t\geq\tau$,
	\begin{equation}\label{eq4.2}
		\begin{split}
			\xi\left(t\right)=&T_{A_{0s}}\left(t-\tau\right)x-\int_t^\infty{T_{A_{0u}}\left(t-r\right)}\Pi_uF\left(\xi\left(r\right)\right)dr\\&-\int_t^\infty{T_{A_{0u}}\left(t-r\right)}\Pi_u\sigma\left(\xi\left(r\right)\right)dW\left(r\right)\\&+\lim_{\lambda\to+\infty}\int_\tau^tT_{A_{0s}}\left(t-r\right)\lambda{R_\lambda\left(A_s\right)}\Pi_sF\left(\xi\left(r\right)\right)dr\\&+\lim_{\lambda\to+\infty}\int_\tau^tT_{A_{0s}}\left(t-r\right)\lambda{R_\lambda\left(A_s\right)}\Pi_s\sigma\left(\xi\left(r\right)\right)dW\left(r\right)
		\end{split}    
	\end{equation}
	in $L^2\left(\Omega,X_0\right)$.
\end{lemma}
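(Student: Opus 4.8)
The plan is to mirror the proof of Lemma \ref{lem3.1}, interchanging the roles of the stable and unstable projections and replacing the backward interval $(-\infty,\tau]$ by the forward one $[\tau,+\infty)$, and to split the argument into necessity and sufficiency. For necessity I first check that every integral in \eqref{eq4.2} is well defined in $L^2(\Omega,X_0)$. The two stable integrals over the window $[\tau,t]$ are handled exactly as in \eqref{eq3.3} and \eqref{eq3.4} through Lemma \ref{lem0}, \eqref{eq_1}, \eqref{eq_2} and \eqref{eq2.10}. The new point is the convergence of the improper unstable integrals over $[t,+\infty)$: using \eqref{eq2.9} with $t-r\leq0$ together with $\|\xi(r)\|_{L^2(\Omega,X_0)}\leq e^{\gamma r}\|\xi\|_{\mathcal{C}^+_\tau}$, one gets
\begin{equation*}
\int_t^\infty\left\Vert T_{A_{0u}}(t-r)\Pi_u F(\xi(r))\right\Vert_{L^2(\Omega,X_0)}dr\leq KL_1\|\xi\|_{\mathcal{C}^+_\tau}e^{\alpha t}\int_t^\infty e^{(\gamma-\alpha)r}dr=KL_1(\alpha-\gamma)^{-1}e^{\gamma t}\|\xi\|_{\mathcal{C}^+_\tau},
\end{equation*}
which is finite precisely because $\gamma<\alpha$; the companion stochastic integral is controlled by the same exponent via the It\^o isometry.

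Since $\xi$ is an integrated solution on $[\tau,+\infty)$, for all $\tau\leq s\leq t$ it satisfies the modified variation of constants formula \eqref{eq2.5} with initial time $s$. Applying $\Pi_{0u}$ and $\Pi_{0s}$ as in the derivation of \eqref{eq3.7} and \eqref{eq3.8} yields separate identities for $\Pi_{0u}\xi$ and $\Pi_{0s}\xi$. For the stable component I set $s=\tau$ and put $x=\Pi_{0s}\xi(\tau)\in L^2(\Omega,\mathcal{F}_\tau;X_{0s})$, reproducing $T_{A_{0s}}(t-\tau)x$ and the two stable integrals of \eqref{eq4.2}. For the unstable component I use that $A_{0u}$ acts on the finite-dimensional $X_{0u}$, so $T_{A_{0u}}$ extends to a group; rewriting the unstable identity backward from a time $s>t$ and applying $T_{A_{0u}}(t-s)$ gives
\begin{equation*}
\Pi_{0u}\xi(t)=T_{A_{0u}}(t-s)\Pi_{0u}\xi(s)-\int_t^s T_{A_{0u}}(t-r)\Pi_u F(\xi(r))dr-\int_t^s T_{A_{0u}}(t-r)\Pi_u\sigma(\xi(r))dW(r).
\end{equation*}
Letting $s\to+\infty$, the boundary term satisfies $\|T_{A_{0u}}(t-s)\Pi_{0u}\xi(s)\|_{L^2(\Omega,X_0)}\leq Ke^{\alpha t}e^{(\gamma-\alpha)s}\|\xi\|_{\mathcal{C}^+_\tau}\to0$, leaving the two improper unstable integrals of \eqref{eq4.2}; summing the stable and unstable parts produces \eqref{eq4.2}.

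For sufficiency I reverse the computation: starting from \eqref{eq4.2}, I verify that for any $\tau\leq r\leq t$ the values $\xi(t)$ and $\xi(r)$ obey the variation of constants formula \eqref{eq2.5} with initial datum $\xi(r)$ at time $r$, the analog of \eqref{eq3.12}, so that $\xi(t)=u(t,r,\xi(r))$. Combined with the progressive measurability and continuity inherited from $\xi\in\mathcal{C}^+_\tau$, Definition \ref{def2.10} then identifies $\xi$ as the integrated solution of \eqref{eq1.1} on $[\tau,+\infty)$.

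The main obstacle is the treatment of the future-pointing unstable integrals: one must simultaneously justify their convergence on $[t,+\infty)$ and the vanishing of the boundary term $T_{A_{0u}}(t-s)\Pi_{0u}\xi(s)$ as $s\to+\infty$, both of which hinge on the dichotomy gap $\gamma<\alpha$ and on the group property of $T_{A_{0u}}$ on the finite-dimensional unstable subspace. Everything else is a sign-and-direction transcription of Lemma \ref{lem3.1}.
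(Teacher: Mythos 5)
Your proposal is correct and follows essentially the same route as the paper's own proof: the same well-definedness estimates for the improper unstable integrals (the paper's \eqref{eq4.3}--\eqref{eq4.4}), the same projection of the variation of constants formula \eqref{eq2.5} via $\Pi_{0u}$ and $\Pi_{0s}$, the same backward rewriting of the unstable component and vanishing of the boundary term $T_{A_{0u}}(t-s)\Pi_{0u}\xi(s)$ as $s\to+\infty$ using $\gamma<\alpha$, and the same sufficiency argument via Definition \ref{def2.10}. Nothing is missing.
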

\begin{proof}
	\noindent {\bf Step 1: necessity.} Suppose $\xi\in{\mathcal{C}^+_\tau}$ with $\tau\in\mathbb{R}$ is an integrated solution of \eqref{eq1.1}. We prove that \eqref{eq4.2} holds true. Since $\xi\in{\mathcal{C}^+_\tau}$, by Lemma \ref{lem0} and Assumption \ref{as2.1}, the last two integrals on the right side of \eqref{eq4.2} are well-defined in $L^2\left(\Omega,X_0\right)$. For the first integral, according to \eqref{eq_1} and \eqref{eq2.9},
	\begin{equation}\label{eq4.3}
		\begin{split}
			\int_t^\infty{\left\Vert{T_{A_{0u}}\left(t-r\right)}\Pi_uF\left(\xi\left(r\right)\right)\right\Vert}_{L^2\left(\Omega,X_0\right)}dr&\leq{KL_1}\int_t^\infty{e^{\alpha\left(t-r\right)}}\left\Vert\xi\left(r\right)\right\Vert_{L^2\left(\Omega,X_0\right)}dr\\&\leq{KL_1}\left\Vert\xi\left(r\right)\right\Vert_{\mathcal{C}^+_\tau}e^{\alpha{t}}\int_t^\infty{e^{-\left(\alpha-\gamma\right)r}}dr\\&\leq{KL_1}\left(\alpha-\gamma\right)^{-1}e^{\gamma{t}}\left\Vert\xi\right\Vert_{\mathcal{C}^+_\tau}\\&<\infty.
		\end{split}
	\end{equation}
	By \eqref{eq4.3}, for $t\geq\tau$,
	\begin{equation*}
		\int_t^\infty{T_{A_{0u}}\left(t-r\right)}\Pi_uF\left(\xi\left(r\right)\right)dr
	\end{equation*}
	is well-defined in $L^2\left(\Omega,X_0\right)$. For the second integral, according to \eqref{eq_2} and \eqref{eq2.9}, 
	\begin{equation}\label{eq4.4}
		\begin{split}
			&\left\Vert\int_t^\infty{T_{A_{0u}}\left(t-r\right)}\Pi_u\sigma\left(\xi\left(r\right)\right)dW\left(r\right)\right\Vert_{L^2\left(\Omega,X_0\right)}\\&~~\leq\left(\mathbb{E}\left[\int_t^\infty\left\Vert{T_{A_{0u}}\left(t-r\right)}\Pi_u\sigma\left(\xi\left(r\right)\right)\right\Vert^2_{\mathcal{L}_2\left(Y_0,X\right)}dr\right]\right)^{\frac{1}{2}}\\&~~\leq{KL_2}\left(\int_t^\infty{e^{2\alpha\left(t-r\right)}}\mathbb{E}\left[\left\Vert\xi\left(r\right)\right\Vert^2_{L^2\left(\Omega,X_0\right)}\right]dr\right)^{\frac{1}{2}}\\&~~\leq{KL_2}\left\Vert\xi\left(r\right)\right\Vert_{\mathcal{C}^+_\tau}e^{\alpha{t}}\left(\int_t^\infty{e^{-2\left(\alpha-\gamma\right)r}}dr\right)^{\frac{1}{2}}\\&~~\leq\frac{1}{\sqrt{2}}{KL_2}\left(\alpha-\gamma\right)^{-\frac{1}{2}}e^{\gamma{t}}\left\Vert\xi\right\Vert_{\mathcal{C}^+_\tau}\\&~~<\infty.
		\end{split}
	\end{equation}
	By \eqref{eq4.4}, for $t\geq\tau$,
	\begin{equation*}
		\int_t^\infty{T_{A_{0u}}\left(t-r\right)}\Pi_u\sigma\left(\xi\left(r\right)\right)dW\left(r\right)
	\end{equation*}
	is well-defined in $L^2\left(\Omega,X_0\right)$. 
	Since $\xi\in{\mathcal{C}^+_\tau}$ is a solution of \eqref{eq1.1} on $[\tau,+\infty)$, by \eqref{eq2.5}, for all $t\geq\tau$, 
	\begin{equation}\label{eq4.5}
		\begin{split}
			\xi\left(t\right)=&T\left(t-\tau\right)\xi\left(\tau\right)+\lim_{\lambda\to+\infty}\int_\tau^tT\left(t-r\right){R_\lambda\left(A\right)}F\left(\xi\left(r\right)\right)dr\\&+\lim_{\lambda\to+\infty}\int_\tau^tT\left(t-r\right)\lambda{R_\lambda\left(A\right)}\sigma\left(\xi\left(r\right)\right)dW\left(r\right).
		\end{split}
	\end{equation}
	Then we apply projections $\Pi_{0u}$ and $\Pi_{0s}$ to \eqref{eq4.5}. Note that $\Pi_u:X\to{X}$ is a bounded linear projection satisfying
	\begin{equation*}
		\Pi_u{R_\lambda\left(A\right)}={R_\lambda\left(A\right)}\Pi_u,~\forall{\lambda>\vartheta}.
	\end{equation*}
	Also, $A|_{\Pi_u\left(X\right)}=A|_{\Pi_u\left(X\right)}=A|_{X_u}=A_u$ satisfies Assumption \ref{as2.1} in $\Pi_u\left(X\right)=X_u$. Moreover, we know that $\Pi_u$ has a finite rank and $\Pi_u\left(X\right)\subset{X_0}$, which further indicates that  $A_0|_{\Pi_u\left(X_0\right)}=A_0|_{X_{0u}}=A_{0u}$ and $A|_{\Pi_u\left(X\right)}=A|_{X_u}=A_u$. Thus if for each $x\in{X_0}$ and each $t\geq\tau$, then the map $t\to\Pi_u{\xi\left(t\right)}$ solves
	\begin{equation*}
		d\Pi_u{\xi\left(t\right)}=A_{0u}\Pi_u{\xi\left(t\right)}dt+\Pi_u{F\left(\xi\left(t\right)\right)}dt+\Pi_u{\sigma\left(\xi\left(t\right)\right)}dW\left(t\right)
	\end{equation*}
	in $\Pi_u\left(X_0\right)=X_{0u}$, then for all $t\geq\tau$, 
	\begin{equation}\label{eq4.6}
		\begin{split}
			\Pi_u{\xi\left(t\right)}=&T_{A_0|_{\Pi_s\left(X_0\right)}}\left(t-\tau\right)\Pi_u{\xi\left(\tau\right)}+\left(S_{{A|}_{\Pi_u\left(X\right)}}\diamond{\Pi_u{\left(F\left(\xi\right)+\sigma\left(\xi\right)dW\right)}}\right)\left(r\right)\\=&T_{A_{0s}}\left(t-\tau\right)\Pi_u{\xi\left(\tau\right)}+\left(S_{{A}_{u}}\diamond{\Pi_u{\left(F\left(\xi\right)+\sigma\left(\xi\right)dW\right)}}\right)\left(r\right)\\=&T_{A_{0u}}\left(t-\tau\right)\Pi_u{\xi\left(\tau\right)}+\lim_{\lambda\to+\infty}\int_\tau^tT_{A_{0u}}\left(t-r\right)\lambda{R_\lambda\left(A_u\right)}\Pi_u{F\left(\xi\left(r\right)\right)}dr\\&+\lim_{\lambda\to+\infty}\int_\tau^tT_{A_{0u}}\left(t-r\right)\lambda{R_\lambda\left(A_u\right)}\Pi_u{\sigma\left(\xi\left(r\right)\right)}dW\left(r\right).
		\end{split}
	\end{equation}
	Since $\Pi_u$ is a extension of $\Pi_{0u}$ from $X_0$ to $X$ and $\Pi_u\left(X\right)\subset{X_0}$, by \eqref{eq4.6} and Lemma \ref{lem2.3}(i), we have
	\begin{equation}\label{eq4.7}
		\begin{split}
			\Pi_{0u}{\xi\left(t\right)}=&\Pi_u{\xi\left(t\right)}=T_{A_{0u}}\left(t-\tau\right)\Pi_{0u}{\xi\left(\tau\right)}+\int_\tau^tT_{A_{0u}}\left(t-r\right)\Pi_u{F\left(\xi\left(r\right)\right)}dr\\&+\int_\tau^tT_{A_{0u}}\left(t-r\right)\Pi_u{\sigma\left(\xi\left(r\right)\right)}dW\left(r\right).
		\end{split}    
	\end{equation}
	Through similar arguments, we have
	\begin{equation}\label{eq4.8}
		\begin{split}
			\Pi_{0s}{\xi\left(t\right)}=&T_{A_{0s}}\left(t-\tau\right)\Pi_{0s}{\xi\left(\tau\right)}+\lim_{\lambda\to+\infty}\int_\tau^tT_{A_{0s}}\left(t-r\right)\lambda{R_\lambda\left(A_s\right)}\Pi_s{F\left(\xi\left(r\right)\right)}dr\\&+\lim_{\lambda\to+\infty}\int_\tau^tT_{A_{0s}}\left(t-r\right)\lambda{R_\lambda\left(A_s\right)}\Pi_s{\sigma\left(\xi\left(r\right)\right)}dW\left(r\right).
		\end{split}
	\end{equation}
	Let $t=s$ in \eqref{eq4.7}, we have, for all $s\geq\tau$,
	\begin{equation}\label{eq4.9}
		\begin{split}
			\Pi_{0u}{\xi\left(s\right)}=&T_{A_{0u}}\left(s-\tau\right)\Pi_{0u}{\xi\left(\tau\right)}+\int_\tau^sT_{A_{0u}}\left(s-r\right)\Pi_u{F\left(\xi\left(r\right)\right)}dr\\&+\int_\tau^sT_{A_{0u}}\left(s-r\right)\Pi_u{\sigma\left(\xi\left(r\right)\right)}dW\left(r\right).
		\end{split}
	\end{equation}
	Hence for all $s\geq{t}\geq\tau$,
	\begin{equation}\label{eq4.10}
		\begin{split}
			T_{A_{0u}}\left(t-s\right)\Pi_{0u}{\xi\left(s\right)}=&T_{A_{0u}}\left(t-\tau\right)\Pi_{0u}{\xi\left(\tau\right)}+\int_\tau^sT_{A_{0u}}\left(t-r\right)\Pi_u{F\left(\xi\left(r\right)\right)}dr\\&+\int_\tau^sT_{A_{0u}}\left(t-r\right)\Pi_u{\sigma\left(\xi\left(r\right)\right)}dW\left(r\right).
		\end{split}
	\end{equation}
	By \eqref{eq4.7} and \eqref{eq4.10}, we have, for all $s\geq{t}\geq\tau$,
	\begin{equation}\label{eq4.11}
		\begin{split}
			\Pi_{0u}{\xi\left(t\right)}=&T_{A_{0u}}\left(t-s\right)\Pi_{0u}{\xi\left(s\right)}-\int_t^s{T_{A_{0u}}\left(t-r\right)}\Pi_uF\left(\xi\left(r\right)\right)dr\\&-\int_t^s{T_{A_{0u}}\left(t-r\right)}\Pi_u\sigma\left(\xi\left(r\right)\right)dW\left(r\right). 
		\end{split}
	\end{equation}
	By \eqref{eq4.8} and \eqref{eq4.11}, for all $s\geq{t}\geq\tau$, we have
	\begin{equation}\label{eq4.12}
		\begin{split}
			\xi\left(t\right)=&T_{A_{0s}}\left(t-\tau\right)\Pi_{0s}\xi\left(\tau\right)-\int_t^s{T_{A_{0u}}\left(t-r\right)}\Pi_uF\left(\xi\left(r\right)\right)dr\\&-\int_t^s{T_{A_{0u}}\left(t-r\right)}\Pi_u\sigma\left(\xi\left(r\right)\right)dW\left(r\right)+T_{A_{0u}}\left(t-s\right)\Pi_{0u}{\xi\left(s\right)}\\&+\lim_{\lambda\to+\infty}\int_\tau^tT_{A_{0s}}\left(t-r\right)\lambda{R_\lambda\left(A_s\right)}\Pi_s{F\left(\xi\left(r\right)\right)}dr\\&+\lim_{\lambda\to+\infty}\int_\tau^tT_{A_{0s}}\left(t-r\right)\lambda{R_\lambda\left(A_s\right)}\Pi_s{\sigma\left(\xi\left(r\right)\right)}dW\left(r\right).
		\end{split}
	\end{equation}
	Since for all $s\geq{t}\geq\tau$, let $s\to{+\infty}$ in \eqref{eq4.12}, then by \eqref{eq2.9},
	\begin{equation*}
		\begin{split}
			{\left\Vert{T_{A_{0u}}\left(t-s\right)\Pi_{0u}{\xi\left(s\right)}}\right\Vert}_{L^2\left(\Omega,X_0\right)}&\leq{Ke^{\alpha\left(t-s\right)}}{\left\Vert{\xi\left(s\right)}\right\Vert}_{L^2\left(\Omega,X_0\right)}\\&\leq{Ke^{\alpha{t}}e^{\left(\gamma-\alpha\right)s}}{\left\Vert{\xi\left(s\right)}\right\Vert}_{\mathcal{C}^-_\tau}\to0.
		\end{split}
	\end{equation*}
	Thus \eqref{eq4.2} is fulfilled as wished with $x=\Pi_{0s}\xi\left(\tau\right)$. 
	
	\noindent {\bf Step 2: sufficiency.} Suppose $\xi\in{\mathcal{C}^+_\tau}$ satisfies \eqref{eq4.2} for $t\geq\tau$. Since $\xi\in{\mathcal{C}^+_\tau}$, we know $\xi$ is an $X_0$-valued $\mathcal{F}_t$-progressively measurable process and $\xi\in{C\left([\tau,+\infty),L^2\left(\Omega,X_0\right)\right)}$. And from \eqref{eq4.2}, one can verify that for all $t\geq\tau$, \begin{equation}\label{eq4.13}
		\begin{split}
			\xi\left(t\right)=&T\left(t-\tau\right)\xi\left(\tau\right)+\lim_{\lambda\to+\infty}\int_\tau^tT\left(t-s\right)\lambda{R_\lambda\left(A\right)}F\left(\xi\left(s\right)\right)ds\\&+\lim_{\lambda\to+\infty}\int_\tau^tT\left(t-s\right)\lambda{R_\lambda\left(A\right)}\sigma\left(\xi\left(s\right)\right)dW\left(s\right).
		\end{split}
	\end{equation}
	From \eqref{eq4.13}, we get that for all $t\geq\tau$,
	\begin{equation*}
		\xi\left(t\right)=u\left(t,\tau,\xi\left(\tau\right)\right),
	\end{equation*}
	where $u$ is the integrated solution of \eqref{eq1.1} with initial condition $\xi\left(\tau\right)$ at initial time $\tau$. Then by the definition of integrated solutions of \eqref{eq1.1} on $[\tau,+\infty)$, $\xi$ is an integrated solution of \eqref{eq1.1} on $[\tau,+\infty)$.
\end{proof}
Now we are ready to construct the mean-square random stable invariant sets of \eqref{eq1.1}. By Lemma \ref{lem4.1}, if $\xi\in{\mathcal{C}^+_\tau}$ is an integrated solution of \eqref{eq1.1}, then $\xi$ must satisfy \eqref{eq4.2}. Take the conditional expectation $\mathbb{E}\left(\cdot|\mathcal{F}_t\right)$ on 
\eqref{eq4.2}, we get for $t\geq\tau$,
\begin{equation}\label{eq4.14}
	\begin{split}
		\xi\left(t\right)=&T_{A_{0s}}\left(t-\tau\right)x-\mathbb{E}\left[\int_t^\infty{T_{A_{0u}}\left(t-r\right)}\Pi_uF\left(\xi\left(r\right)\right)dr|\mathcal{F}_t\right]\\&-\mathbb{E}\left[\int_t^\infty{T_{A_{0u}}\left(t-r\right)}\Pi_u\sigma\left(\xi\left(r\right)\right)dW\left(r\right)|\mathcal{F}_t\right]\\&+\lim_{\lambda\to+\infty}\int_\tau^tT_{A_{0s}}\left(t-r\right)\lambda{R_\lambda\left(A_s\right)}\Pi_sF\left(\xi\left(r\right)\right)dr\\&+\lim_{\lambda\to+\infty}\int_\tau^tT_{A_{0s}}\left(t-r\right)\lambda{R_\lambda\left(A_s\right)}\Pi_s\sigma\left(\xi\left(r\right)\right)dW\left(r\right).
	\end{split}
\end{equation}
in $L^2\left(\Omega,X_0\right)$ with $x\in{L^2\left(\Omega,\mathcal{F}_\tau;X_{0s}\right)}$. Similar to what we did when constructing unstable manifolds, given $\tau\in\mathbb{R}$, set
\begin{equation*}
	\mathcal{M}^{s*}\left(\tau\right)=\left\{\xi\left(\tau\right)\in{{L^2\left(\Omega,\mathcal{F}_\tau;X_0\right)}}:\xi\in{\mathcal{C}^+_\tau} \text{and satisfies}~\eqref{eq4.14}~\text{on}~[\tau,+\infty)\right\}.
\end{equation*}
But it turns out that the invariance property of $\mathcal{M}^{s*}\left(\tau\right)$ cannot be proved due to the $\mathcal{F}_t$-adaptedness issues, which means we cannot give the existence of mean-square random stable invariant manifolds yet. We shall consider the mean-square random stable invariant sets instead. For every $\tau\in\mathbb{R}$ and $t\geq\tau$, set
\begin{equation}\label{eq4.15}
\begin{split}
&\mathcal{M}^{s}\left(\tau\right)\\
&~=\left\{\xi\left(\tau\right)\in{{L^2\left(\Omega,\mathcal{F}_\tau;X_0\right)}}:\xi\left(t\right)\in{\mathcal{C}^+_\tau}~\text{and is an integrated solution of}~\eqref{eq1.1}\right\}.
\end{split}
\end{equation}
We shall prove that $\mathcal{M}^s\left(\tau\right)$ is a mean-square random stable invariant set of \eqref{eq1.1}. By Lemma \ref{lem4.1}, we need to find all solutions of \eqref{eq4.2} which belongs to ${\mathcal{C}^+_\tau}$ on $[\tau,+\infty)$. Given $\tau\in\mathbb{R}$, define a subset of ${L^2\left(\Omega,\mathcal{F}_\tau;X_{0s}\right)}$ as $X_{0s}^\tau$ which is given by 
\begin{equation*}
	X_{0s}^\tau=\left\{x\in{L^2\left(\Omega,\mathcal{F}_\tau;X_{0s}\right)}:\text{there exists}~ \xi\in{\mathcal{C}^+_\tau}~\text{satisfying}~\eqref{eq4.2}\right\}.
\end{equation*}
$X_{0s}^\tau$ is nonempty since $0\in{X_{0s}^\tau}$. By \eqref{eq4.15} and Lemma \ref{lem4.1}, we get that, for every given $\tau\in\mathbb{R}$ and $t\geq\tau$,
\begin{equation}\label{eq4.16}
\begin{split}
&\mathcal{M}^{s}\left(\tau\right)\\
&~=\left\{\xi\left(\tau\right)\in{{L^2\left(\Omega,\mathcal{F}_\tau;X_0\right)}}:\exists ~x\in{X_{0s}^\tau}~\text{such that}~\xi\left(t\right)\in{\mathcal{C}^+_\tau}~\text{satisfies}~\eqref{eq4.2}\right\}.
\end{split}
\end{equation}
We below prove that $ \mathcal{M}^s\left(\tau\right)$ is a invariant set of $\Phi$ and it is given by a graph of  Lipschitz function.
\begin{theorem}\label{thm4.2}
	Assume Assumption \ref{as1}, Assumption \ref{as2.1}, Assumption \ref{as2.3} and Assumption \ref{as2.5} hold. If  $\beta<\zeta<\gamma<\alpha$ with
	\begin{equation}\label{eq4.17}
		{K\left(L_1\left(\alpha-\gamma\right)^{-1}+L_2\left(2\alpha-2\gamma\right)^{-\frac{1}{2}}+L_1C_\zeta+L_2C_\zeta\right)}<1,
	\end{equation}
	then there exists a mean-square random stable invariant set for \eqref{eq1.1} given by \begin{equation*}
		\mathcal{M}^s\left(\tau\right)=\left\{x+h^s\left(x,\tau\right):x\in{L^2\left(\Omega,\mathcal{F}_\tau;X_{0s}\right)}\right\},
	\end{equation*} 
	where $h^s\left(\cdot,\tau\right):X_{0s}^\tau\to{L^2\left(\Omega,\mathcal{F}_\tau;X_{0u}\right)}$ is a Lipschitz continuous map.
\end{theorem}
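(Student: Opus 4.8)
The plan is to run the Lyapunov--Perron scheme of Theorem \ref{thm3.2} in the forward space $\mathcal{C}^+_\tau$, with the stable and unstable roles interchanged. Denote by $\mathcal{J}\left(\xi,x,\tau\right)\left(t\right)$ the right-hand side of \eqref{eq4.14}, so that $\xi\in\mathcal{C}^+_\tau$ solves \eqref{eq4.14} exactly when it is a fixed point of $\mathcal{J}\left(\cdot,x,\tau\right)$. First I would show that $\mathcal{J}\left(\cdot,x,\tau\right):\mathcal{C}^+_\tau\to\mathcal{C}^+_\tau$ is a well-defined contraction for each $x\in{L^2\left(\Omega,\mathcal{F}_\tau;X_{0s}\right)}$. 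Progressive measurability and continuity in $t$ of the two conditional-expectation terms follow as in Theorem \ref{thm3.2}: the martingale representation of \cite[Lemma 2.1]{YS1991} realises $\mathbb{E}\left[\cdot|\mathcal{F}_t\right]$ of the backward unstable integrals as an adapted, continuous $X_0$-valued process, while the two forward stable integrals from $\tau$ to $t$ are adapted and continuous by construction.

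For the quantitative part, the self-mapping bound \eqref{eq4.1} would follow by combining the dichotomy estimate \eqref{eq2.10} for the leading term $T_{A_{0s}}\left(t-\tau\right)x$, the bounds \eqref{eq4.3} and \eqref{eq4.4} for the two backward unstable integrals, and Lemma \ref{lem0} for the two forward stable integrals. The crucial point is that $\mathbb{E}\left[\cdot|\mathcal{F}_t\right]$ is an $L^2$-contraction, so the conditional expectation does not enlarge any of these norms and the estimates pass through unchanged. Subtracting $\mathcal{J}\left(\xi_1,x,\tau\right)$ and $\mathcal{J}\left(\xi_2,x,\tau\right)$ and invoking the Lipschitz hypotheses \eqref{eq1} and \eqref{eq2}, the four contributions assemble into the Lipschitz constant $K\left(L_1\left(\alpha-\gamma\right)^{-1}+L_2\left(2\alpha-2\gamma\right)^{-1/2}+L_1C_\zeta+L_2C_\zeta\right)$, which is strictly less than $1$ by \eqref{eq4.17}; the extra term $L_2\left(2\alpha-2\gamma\right)^{-1/2}$, absent from \eqref{eq3.14}, is precisely the cost of the backward stochastic unstable integral estimated in \eqref{eq4.4}. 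The contraction mapping principle then yields a unique fixed point $\overline\xi\left(x\right)\in\mathcal{C}^+_\tau$, whose Lipschitz dependence on $x$ is obtained as in \eqref{eq3.26}--\eqref{eq3.27}.

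Next I would set $h^s\left(x,\tau\right)=\Pi_{0u}\overline\xi\left(x\right)\left(\tau\right)$. Evaluating \eqref{eq4.14} at $t=\tau$ collapses the leading term to $x$ and annihilates the two forward stable integrals, so that $\overline\xi\left(x\right)\left(\tau\right)=x+h^s\left(x,\tau\right)$ with $h^s\left(x,\tau\right)\in{L^2\left(\Omega,\mathcal{F}_\tau;X_{0u}\right)}$ given by the two backward conditional-expectation terms at $\tau$; its Lipschitz continuity follows from \eqref{eq4.4} and the Lipschitz bound on $\overline\xi\left(x\right)$, in parallel with \eqref{eq3.29}. For $x\in{X_{0s}^\tau}$ there is, by the very definition of $X_{0s}^\tau$, some $\xi\in\mathcal{C}^+_\tau$ satisfying the pathwise identity \eqref{eq4.2}; taking $\mathbb{E}\left[\cdot|\mathcal{F}_t\right]$ shows it also solves \eqref{eq4.14}, so by uniqueness it equals $\overline\xi\left(x\right)$ and the fixed point is a genuine integrated solution. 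Together with \eqref{eq4.16} and Lemma \ref{lem4.1} this gives $\mathcal{M}^s\left(\tau\right)=\left\{x+h^s\left(x,\tau\right):x\in{X_{0s}^\tau}\right\}$. Invariance is then the easy direction: given $\xi\left(\tau\right)\in\mathcal{M}^s\left(\tau\right)$ with associated solution $\xi\in\mathcal{C}^+_\tau$, its restriction to $[\tau+t_0,+\infty)$ is still an integrated solution and still lies in $\mathcal{C}^+_{\tau+t_0}$ (the exponential bound only improves on the smaller time range), whence $\Phi\left(t_0,\tau\right)\xi\left(\tau\right)=\xi\left(\tau+t_0\right)\in\mathcal{M}^s\left(\tau+t_0\right)$.

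The hard part, and the structural reason one obtains only a set, is that a fixed point of $\mathcal{J}$ solves the conditional-expectation equation \eqref{eq4.14} but need not satisfy the stronger pathwise identity \eqref{eq4.2}, since $\mathbb{E}\left[\cdot|\mathcal{F}_t\right]$ cannot be undone. Hence $h^s$ is guaranteed to parametrise genuine integrated solutions only over the subset $X_{0s}^\tau\subseteq{L^2\left(\Omega,\mathcal{F}_\tau;X_{0s}\right)}$ on which such a solution is known a priori to exist, rather than over the full stable fibre. Proving that $X_{0s}^\tau$ exhausts the whole space, which would promote the invariant set to an invariant manifold, is exactly the adaptedness obstruction flagged before \eqref{eq4.15} and is not settled by the contraction argument alone.
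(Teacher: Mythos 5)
Your proposal is correct in substance, but it reaches the theorem by a genuinely different route than the paper. The paper's Step 1 contains no fixed-point construction at all: since existence of a solution of \eqref{eq4.2} for $x\in X_{0s}^\tau$ is built into the very definition of $X_{0s}^\tau$, the paper only needs \emph{uniqueness}, which it proves by subtracting two putative solutions of the pathwise equation \eqref{eq4.2} and estimating the four terms $\mathcal{I}_1,\dots,\mathcal{I}_4$ in \eqref{eq4.18}--\eqref{eq4.22} to obtain the factor in \eqref{eq4.17}; the conditional-expectation equation \eqref{eq4.14} is never actually invoked in the paper's argument (it only serves to motivate the abandoned set $\mathcal{M}^{s*}\left(\tau\right)$). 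You instead transplant the Lyapunov--Perron scheme of Theorem \ref{thm3.2}: you construct a fixed point $\overline\xi\left(x\right)$ of the conditional-expectation map $\mathcal{J}$ for \emph{every} $x$ in the whole stable fibre $L^2\left(\Omega,\mathcal{F}_\tau;X_{0s}\right)$, and then, for $x\in X_{0s}^\tau$, identify $\overline\xi\left(x\right)$ with the pathwise solution by taking $\mathbb{E}\left[\cdot|\mathcal{F}_t\right]$ of \eqref{eq4.2} and appealing to uniqueness of fixed points. The quantitative content is identical in both routes (same estimates, same gap condition), and your Steps 2--3 (graph representation, Lipschitz bounds, invariance by restriction of genuine solutions) coincide with the paper's. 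What your route buys is a Lipschitz map $h^s$ defined on all of $L^2\left(\Omega,\mathcal{F}_\tau;X_{0s}\right)$, of which the paper's $h^s$ is the restriction to $X_{0s}^\tau$, together with a transparent localization of the obstruction: fixed points of \eqref{eq4.14} need not satisfy \eqref{eq4.2}. What it costs is that you must prove $\mathcal{J}$ is a well-defined self-map of $\mathcal{C}^+_\tau$, which the paper's uniqueness-only argument never needs.

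That cost is the one place where your write-up is thinner than it should be. Your appeal to \cite[Lemma 2.1]{YS1991} for progressive measurability and $L^2$-continuity of $\mathbb{E}\left[\int_t^\infty T_{A_{0u}}\left(t-r\right)\Pi_u F\left(\xi\left(r\right)\right)dr\,|\,\mathcal{F}_t\right]$ and of the corresponding stochastic integral term is not a verbatim transfer from Theorem \ref{thm3.2}: there the conditioned quantity lives on the finite interval $\left[t,\tau\right]$, whereas here the horizon is infinite, and the cited backward representation result is a finite-horizon statement. This is fixable --- truncate at a finite time $T$, use the bounds \eqref{eq4.3}--\eqref{eq4.4} to pass to the limit in $L^2$ uniformly for $t$ in compacts, and obtain continuity in $t$ from $L^2$-continuity of the conditioned variable together with right-continuity of the filtration and martingale convergence, after which an adapted $L^2$-continuous process admits a progressively measurable version --- but it must be said explicitly, since it is precisely the technical overhead that the paper's choice of argument was designed to avoid.
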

\begin{proof}We proceed it in three steps.
	
	\noindent {\bf Step 1.} By the definition of $X_{0s}^\tau$, if $x\in{X_{0s}^\tau}$, then there exists at least one $\xi\in{\mathcal{C}^+_\tau}$ which satisfies \eqref{eq4.2}. We prove that when \eqref{eq4.17} is satisfied, such $\xi$ is unique. Suppose $\xi_1,\xi_2\in{\mathcal{C}^+_\tau}$ satisfy \eqref{eq4.2} with $x\in{X_{0s}^\tau}$, then by \eqref{eq4.2}, we have, for $t\geq\tau$,
	\begin{equation}\label{eq4.18}
		\begin{split}
			&\left\Vert{\xi_1\left(t\right)-\xi_2\left(t\right)}\right\Vert_{L^2\left(\Omega,X_0\right)}\\&~~\leq\int_t^\infty{\left\Vert{T_{A_{0u}}\left(t-r\right)}\Pi_u\left(F\left(\xi_1\left(r\right)\right)-F\left(\xi_2\left(r\right)\right)\right)\right\Vert}_{L^2\left(\Omega,X_0\right)}dr\\&~~\leq{\left\Vert\int_t^\infty{T_{A_{0u}}}\left(t-r\right)\Pi_u\left(\sigma\left(\xi_1\left(r\right)\right)-\sigma\left(\xi_2\left(r\right)\right)\right))dW\left(r\right)\right\Vert}_{L^2\left(\Omega,X_0\right)}\\&\quad+\lim_{\lambda\to+\infty}\int_\tau^t{\left\Vert{T_{A_{0s}}\left(t-r\right)}\lambda{R_\lambda\left(A_s\right)}\Pi_s\left(F\left(\xi_1\left(r\right)\right)-F\left(\xi_2\left(r\right)\right)\right)\right\Vert}_{L^2\left(\Omega,X_0\right)}dr\\&\quad+{\left\Vert\lim_{\lambda\to+\infty}\int_\tau^tT_{A_{0s}}\left(t-r\right)\lambda{R_\lambda\left(A_s\right)}\Pi_s\left(\sigma\left(\xi_1\left(r\right)\right)-\sigma\left(\xi_2\left(r\right)\right)\right))dW\left(r\right)\right\Vert}_{L^2\left(\Omega,X_0\right)}\\&~~=\mathcal{I}_1+\mathcal{I}_2+\mathcal{I}_3+\mathcal{I}_4.
		\end{split}
	\end{equation}
	By \eqref{eq1} and \eqref{eq2.9}, we have, for $t\geq\tau$,
	\begin{equation}\label{eq4.19}
		\begin{split}
			\mathcal{I}_1&\leq{KL_1}\int_t^\infty{e^{\alpha\left(t-r\right)}}\left\Vert\xi_1\left(r\right)-\xi_2\left(r\right)\right\Vert_{L^2\left(\Omega,X_0\right)}dr\\&\leq{KL_1}\left\Vert\xi_1\left(r\right)-\xi_2\left(r\right)\right\Vert_{\mathcal{C}^+_\tau}e^{\alpha{t}}\int_t^\infty{e^{-\left(\alpha-\gamma\right)r}}dr\\&\leq{KL_1}\left(\alpha-\gamma\right)^{-1}e^{\gamma{t}}\left\Vert\xi_1-\xi_2\right\Vert_{\mathcal{C}^+_\tau}.
		\end{split}
	\end{equation}
	By \eqref{eq2} and \eqref{eq2.9}, we have, for $t\geq\tau$,
	\begin{equation}\label{eq4.20}
		\begin{split}
			\mathcal{I}_2&=\mathbb{E}\left[\left(\int_t^\infty{T_{A_{0u}}}\left(t-r\right)\Pi_u\left(\sigma\left(\xi_1\left(r\right)\right)-\sigma\left(\xi_2\left(r\right)\right)\right)dW\left(r\right)\right)^2\right]^{\frac{1}{2}}\\&\leq{KL_2}\left(\int_t^\infty{e^{2\alpha\left(t-r\right)}}\mathbb{E}\left[\left\Vert\xi_1\left(r\right)-\xi_2\left(r\right)\right\Vert^2_{L^2\left(\Omega,X_0\right)}\right]dr\right)^{\frac{1}{2}}\\&\leq{KL_2}\left\Vert\xi_1\left(r\right)-\xi_2\left(r\right)\right\Vert_{\mathcal{C}^+_\tau}e^{\alpha{t}}\left(\int_t^\infty{e^{-2\left(\alpha-\gamma\right)r}}dr\right)^{\frac{1}{2}}\\&\leq{KL_2}\left(2\alpha-2\gamma\right)^{-\frac{1}{2}}e^{\gamma{t}}\left\Vert\xi_1-\xi_2\right\Vert_{\mathcal{C}^+_\tau}.
		\end{split}
	\end{equation}
	By Lemma \ref{lem0} and \eqref{eq1}, we have, for $t\geq\tau$ and $\zeta\in\left(\beta,\gamma\right)$, 
	\begin{equation}\label{eq4.21}
		\begin{split}
			\mathcal{I}_3&=\lim_{\lambda\to+\infty}\int_\tau^t{\left\Vert{T_{A_{0s}}\left(t-r\right)}\lambda{R_\lambda\left(A_s\right)}\Pi_s\left(F\left(\xi_1\left(r\right)\right)-F\left(\xi_2\left(r\right)\right)\right)\right\Vert}_{L^2\left(\Omega,X_0\right)}dr
			\\&\leq{L_1}C_\zeta\underset{s\in[0,t-\tau]}{\sup}e^{\zeta\left(t-\tau-s\right)}{\left\Vert\xi_1\left(s+\tau\right)-\xi_2\left(s+\tau\right)\right\Vert}_{L^2\left(\Omega,X_0\right)}\\
			&\leq{L_1}C_\zeta{e^{\gamma{t}}}{\left\Vert\xi_1-\xi_2\right\Vert}_{\mathcal{C}^+_\tau}.
		\end{split}
	\end{equation}
	By Lemma \ref{lem0} and \eqref{eq2}, we have, for $t\geq\tau$ and $\zeta\in\left(\beta,\gamma\right)$, 
	\begin{equation}\label{eq4.22}
		\begin{split}
			\mathcal{I}_4&={\left\Vert\lim_{\lambda\to+\infty}\int_\tau^tT_{A_{0s}}\left(t-r\right)\lambda{R_\lambda\left(A_s\right)}\Pi_s\left(\sigma\left(\xi_1\left(r\right)\right)-\sigma\left(\xi_2\left(r\right)\right)\right))dW\left(r\right)\right\Vert}_{L^2\left(\Omega,X_0\right)}
			\\&\leq{C_\zeta}\underset{s\in[0,t-\tau]}{\sup}e^{\zeta\left(t-\tau-s\right)}\mathbb{E}\left[{\left\Vert\sigma\left(\xi_1\left(s+\tau\right)\right)-\sigma\left(\xi_2\left(s+\tau\right)\right)\right\Vert}^2_{\mathcal{L}_2\left(Y_0,X\right)}\right]^{\frac{1}{2}}\\&\leq{L_2}C_\zeta{e^{\gamma{t}}}{\left\Vert\xi_1-\xi_2\right\Vert}_{\mathcal{C}^+_\tau}.
		\end{split}
	\end{equation}
	By combining \eqref{eq4.18}-\eqref{eq4.22}, we have, for $t\geq\tau$,
	\begin{equation}\label{eq4.23}
		\begin{split}
			&\left\Vert{\xi_1\left(t\right)-\xi_2\left(t\right)}\right\Vert_{\mathcal{C}^+_\tau}\\&\leq{K\left(L_1\left(\alpha-\gamma\right)^{-1}+L_2\left(2\alpha-2\gamma\right)^{-\frac{1}{2}}+L_1C_\zeta+L_2C_\zeta\right)}\left\Vert{\xi_1\left(t\right)-\xi_2\left(t\right)}\right\Vert_{\mathcal{C}^+_\tau}.
		\end{split}
	\end{equation}
	By \eqref{eq4.17} and \eqref{eq4.23}, we get that for $t\geq\tau$, $\xi_1\left(t\right)=\xi_2\left(t\right)$. So such $\xi$ is unique. Given $\tau\in\mathbb{R}$ and $x\in{X_{0s}^\tau}$, denote by $\hat{\xi}\left(x\right)$ the unique solution of \eqref{eq4.2} through the above steps. Let ${K\left(L_1\left(\alpha-\gamma\right)^{-1}+L_2\left(2\alpha-2\gamma\right)^{-\frac{1}{2}}+L_1C_\zeta+L_2C_\zeta\right)}=\delta$. By \eqref{eq4.23}, for $x_1,x_2\in{X_{0s}^\tau}$, we have, for $t\geq\tau$,
	\begin{equation}\label{eq4.24}
		\begin{split}
			&\left\Vert{\hat\xi\left(x_1\right)-\hat\xi\left(x_2\right)}\right\Vert_{\mathcal{C}_\tau^+}\\&\leq\underset{t\geq\tau}{\sup}~e^{-\gamma{t}}\left\Vert{T_{A_{0s}}\left(t-\tau\right)\left(x_1-x_2\right)}\right\Vert_{L^2\left(\Omega,X_0\right)}+\delta{\left\Vert\hat\xi\left(x_1\right)-\hat\xi\left(x_2\right)\right\Vert}_{\mathcal{C}^+_\tau}\\&\leq{K}e^{-\beta\tau}\underset{t\geq\tau}{\sup}~e^{\left(\beta-\gamma\right){t}}\left\Vert{x_1-x_2}\right\Vert_{L^2\left(\Omega,X_0\right)}+\delta{\left\Vert\hat\xi\left(x_1\right)-\hat\xi\left(x_2\right)\right\Vert}_{\mathcal{C}^+_\tau}\\&\leq{Ke^{-\gamma\tau}}\left\Vert{x_1-x_2}\right\Vert_{L^2\left(\Omega,X_0\right)}+\delta{\left\Vert\hat\xi\left(x_1\right)-\hat\xi\left(x_2\right)\right\Vert}_{\mathcal{C}^+_\tau}.
		\end{split}
	\end{equation}
	Thus by \eqref{eq4.24}, for all $x_1,x_2\in{X_{0s}^\tau}$, we have
	\begin{equation}\label{eq4.25}
		{\left\Vert{\hat\xi\left(x_1\right)\left(\tau\right)-\hat\xi\left(x_2\right)\left(\tau\right)}\right\Vert}_{L^2\left(\Omega,X_0\right)}\leq{K}\left(1-\delta\right)^{-1}\left\Vert{x_1-x_2}\right\Vert_{L^2\left(\Omega,X_0\right)}.  
	\end{equation}
	\noindent {\bf Step 2.} We prove that the mean-square stable invariant set is given by a graph of a Lipschitz continuous map.
	Let $h^s\left(x,\tau\right)=\Pi_{0u}\hat\xi\left(x\right)\left(\tau\right)$. Let $s\to{+\infty}$ and $t=\tau$ in \eqref{eq4.11}, we have
	\begin{equation}\label{eq4.26}
		\begin{split}
			h^s\left(x,\tau\right)=&-\int_\tau^\infty{T_{A_{0u}}}\left(\tau-r\right)\Pi_u{F\left(\hat\xi\left(x\right)\left(r\right)\right)}dr\\&-\int_\tau^\infty{T_{A_{0u}}}\left(\tau-r\right)\Pi_u{\sigma\left(\hat\xi\left(x\right)\left(r\right)\right)}dW\left(r\right).
		\end{split}
	\end{equation}
	Indeed, by \eqref{eq4.19}, \eqref{eq4.20} and \eqref{eq4.25}, it follows that for all $x_1,x_2\in{X_{0s}^\tau}$, 
	\begin{equation}
		\label{eq4.27}
		\begin{split}
			&{\left\Vert{h^s\left(x_1,\tau\right)-h^s\left(x_2,\tau\right)}\right\Vert}_{L^2\left(\Omega,X_0\right)}\\&~~\leq{K^2\left(L_1\left(\alpha-\gamma\right)^{-1}+L_2\left(2\alpha-2\gamma\right)^{-\frac{1}{2}}\right)}{\left(1-\delta\right)^{-1}}\left\Vert{x_1-x_2}\right\Vert_{L^2\left(\Omega,X_0\right)}.
		\end{split}
	\end{equation}
	By the definition of $h^s\left(\cdot,\tau\right)$ and \eqref{eq3.29}, we get that $h^s\left(\cdot,\tau\right):X_{0s}^\tau\to{L^2\left(\Omega,\mathcal{F}_\tau;X_{0u}\right)}$ is a Lipschitz continuous map. 
	Then by \eqref{eq4.2} and \eqref{eq4.26}, we have for all $\tau\in\mathbb{R}$ and $x\in{X_{0s}^\tau}$,
	\begin{equation}\label{eq4.28}
		\hat\xi\left(x\right)\left(\tau\right)=x+h^s\left(x,\tau\right).
	\end{equation}
	By the definition of $\mathcal{M}^s\left(\tau\right)$ \eqref{eq4.16} and Lemma \ref{lem4.1}, $\xi\left(\tau\right)\in\mathcal{M}^s\left(\tau\right)$ if and only if there exists $x\in
	{X_{0s}^\tau}$ such that $\xi\left(\tau\right)=x+h^s\left(x,\tau\right)$. Therefore, we have
	\begin{equation*}
		\mathcal{M}^s\left(\tau\right)=\left\{x+h^s\left(x,\tau\right):x\in{X_{0s}^\tau}\right\}.
	\end{equation*}
	\noindent {\bf Step 3.} Given $\tau\in\mathbb{R}$, we prove that $\mathcal{M}^s\left(\tau\right)$ is invariant under $\Phi$. By Definition \ref{inv}, we need to show that for each $\xi\left(\tau\right)\in\mathcal{M}^s\left(\tau\right)$ and $t>0$, $\Phi\left(t,\tau\right)\xi\left(\tau\right)\in\mathcal{M}^s\left(\tau+t\right)$. By \eqref{eq4.15}, for $\xi\left(\tau\right)\in\mathcal{M}^s\left(\tau\right)$, $\xi$ is an integrated solution of \eqref{eq1.1} on $[\tau,+\infty)$. It is easy to prove that $\xi$ also belongs to $\mathcal{C}^+_{\tau+t}$ and is an integrated solution of \eqref{eq1.1} on $[\tau+t,+\infty)$. So by \eqref{eq4.15}, $\xi\left(\tau+t\right)\in\mathcal{M}^s\left(\tau+t\right)$. Since $\xi$ is an integrated solution of \eqref{eq1.1} on $[\tau,+\infty)$, we find that 
	\begin{equation*}
		\Phi\left(t,\tau\right)\left(\xi\left(\tau\right)\right)=u\left(t+\tau,\tau,\xi\left(\tau\right)\right)=\xi\left(t+\tau\right).
	\end{equation*}
	So $\Phi\left(t,\tau\right)\left(\xi\left(\tau\right)\right)\in\mathcal{M}^s\left(\tau+t\right)$, which implies $\Phi\left(t,\tau\right)\mathcal{M}^s\left(\tau\right)\subseteq\mathcal{M}^s\left(\tau+t\right)$. So $\mathcal{M}^s\left(\tau\right)$ is invariant under $\Phi$.
\end{proof}

\section{Example}
Consider the following stochastic Stratonovich parabolic partial equation
\begin{equation}\label{eq8.1}
\left\lbrace
\begin{split}
&\frac{\partial u\left(t,x\right)}{\partial t}=\left(\frac{\partial^2 u\left(t,x\right)}{\partial x^2} +\frac{{\pi}^2}{2}u\left(t,x\right)+g_0\left(u\left(t,x\right) \right)\right)dt+\sigma\left(u\left(t,x\right)\right) dW\left(t\right),\\
&-\frac{\partial u\left(t,0\right)}{\partial x}=g_1\left(u\left(t,\cdot\right)\right),\\
&\frac{\partial u\left(t,1\right)}{\partial x}=g_2\left(u\left(t,\cdot\right)\right),\\
&u\left(0,\cdot\right)=u_0\in L^2\left(0,1\right).
\end{split}
\right.
\end{equation}
where $x\in\left(0,1\right), g_0:L^2\left(0,1\right)\to L^2\left(0,1\right)$, $g_1, g_2:L^2\left(0,1\right)\to\mathbb{R}$ are arbitrary nonlinear functions, which are assumed to be globally Lipschitz continuous and $g_0\left(0\right)=g_1\left(0\right)=g_2\left(0\right)=0$. Herein, $W$ is a two-sided cylindrical Wiener process on a complete filtered probability space $\left({\Omega}, \mathcal{F}, \{{\mathcal{F}_t}\}_{t\in\mathbb{R}}, \mathbb{P}\right)$ and $L^2\left(0,1\right)$ is the usual $L^2$-space on the interval $\left(0,1\right)$. $\sigma$ is a nonlinear function. In order to incorporate the boundary conditions, we define
\begin{equation*}
\mathcal{Y}=\mathbb{R}\times\mathbb{R} \times L^2\left(0,1\right) \text{~~and~~} \mathcal{Y}_0=\left\{0\right\}\times\left\{0\right\} \times L^2\left(0,1\right).
\end{equation*}
which are Banach spaces equipped with the usual product norm. Let us denote
\begin{equation*}
\mathcal{A}
\begin{pmatrix}
0\\
0\\
\varphi
\end{pmatrix}=
\begin{pmatrix}
\varphi'\left(0\right)\\
-\varphi'\left(1\right)\\
\varphi''
\end{pmatrix}
\end{equation*}
with domain $D\left(\mathcal{A}\right)=\left\{0\right\}\times\left\{0\right\} \times W^{2,2}\left(0,1\right)$ and its part
\begin{equation*}
\mathcal{A}_0
\begin{pmatrix}
0\\
0\\
\varphi
\end{pmatrix}=
\begin{pmatrix}
0\\
0\\
\varphi''
\end{pmatrix}
\end{equation*}
with domain $D\left(\mathcal{A}_0\right)=\left\{0\right\}\times\left\{0\right\} \times \left\{\varphi\in W^{2,2}\left(0,1\right): \varphi'\left(0\right)=\varphi'\left(1\right)=0\right\}$, where $W^{2,2}\left(0,1\right)$ is the usual Sobolev space. Notice that
\begin{equation*}
 \mathcal{Y}_0=\overline{D\left(\mathcal{A}\right)}=\left\{0\right\}\times\left\{0\right\} \times L^2\left(0,1\right)\neq\mathcal{Y}
\end{equation*}
and take
\begin{equation*}
F
\begin{pmatrix}
0\\
0\\
\varphi
\end{pmatrix}=
\begin{pmatrix}
g_0\left(\varphi\right)\\
g_1\left(\varphi\right)\\
g_2\left(\varphi\right)
\end{pmatrix},\quad u^*\left(t,\cdot\right)=
\begin{pmatrix}
0\\
0\\
u\left(t,\cdot\right)
\end{pmatrix},\quad \omega\left(t\right)=
\begin{pmatrix}
0\\
0\\
W\left(t\right)
\end{pmatrix} .
\end{equation*}
Then we can rewrite \eqref{eq8.1} as
\begin{equation*}
\left\lbrace
\begin{split}
&du^*\left(t\right)= \left(\left(\mathcal{A}+\frac{1}{2}{\pi}^2I\right)u^*\left(t\right)+F\left(u^*\left(t\right)\right)\right)dt+\sigma\left(u^*\left(t\right)\right)d\omega\left(t\right),\\
&u^*\left(0\right)=u^*_0\in \mathcal{Y}_0.
\end{split}
\right.
\end{equation*}
Then $F:L^2\left(\Omega,\mathcal{Y}_0\right)\to{L^2\left(\Omega,\mathcal{Y}\right)}$ is globally Lipschitz continuous by the Lipschitz continuity of $g_i, i=0, 1, 2$. Assume $\sigma$ is $C^1_b$ and satisfies $\sigma\left(0\right)=0$. Then Assumption \ref{as1} is satisfied.
According to \cite[Lemma 6.1 \& Lemma 6.3]{Magal2016} that the linear operator $\mathcal{A}_0$ is the infintesimal generator of ${\left(T_{\mathcal{A}_0}\left(t\right)\right)}_{t\geq0}$ a $C_0$-semigroup on $\mathcal{Y}_0$ and the non-densely defined operator $\mathcal{A}$ is 3/4-almost sectorial, namely, its resolvent satisfies
\begin{equation*}
0 < \liminf_{\lambda\to\infty}\lambda^{\frac34}\left\|\left(\lambda I-\mathcal{A}\right)^{-1}\right\|_{\mathcal{L}\left(x\right)}\le
 \limsup_{\lambda\to\infty}\lambda^{\frac34}\left\|\left(\lambda I-\mathcal{A}\right)^{-1}\right\|_{\mathcal{L}\left(x\right)}<\infty.
\end{equation*}
Then \cite[Lemma 8.5]{PM2009} guarantees Assumption \ref{as2.2}. Moreover, the spectrum of $\mathcal{A}_0$ is given by
\begin{equation*}
	\sigma\left(\mathcal{A}_0\right)=\left\{-\left(\pi{k}\right)^2:k\in\mathbb{N}\right\}.
\end{equation*}
Moreover, $\left(\mathcal{A}+\frac{1}{2}\pi^2{I}\right)_0$, the part of $\left(\mathcal{A}+\frac{1}{2}\pi^2{I}\right)$, is the infinitesimal generator of a $C_0$-semigroup on $\mathcal{Y}_0$ denoted by ${\left(T_{\left(\mathcal{A}+\frac{1}{2}\pi^2{I}\right)_0}\left(t\right)\right)}_{t\geq0}$. Also, according to \cite[Lemma 6.4] {Magal2016} and \cite[Proposition 2.5]{PM2007,PM2009}, we get that Assumption \ref{as2.1} is satisfied, which means $\mathcal{A}$ does not satisfy the Hille-Yosida condition but $\left(\mathcal{A}+\frac{1}{2}\pi^2{I}\right)$ generates a integrated semigroup ${\left(S_{\left(\mathcal{A}+\frac{1}{2}\pi^2{I}\right)}\left(t\right)\right)}_{t\geq0}$. So we next check Assumption \ref{as2.5}. In fact, by \cite[Lemma 6.2]{Magal2016} and \cite[Lemma 2.1]{PM_2009}, we have $\sigma\left(\mathcal{A}_0\right)=\sigma\left(\mathcal{A}\right)$, then
\begin{equation*}
\begin{split}
	\sigma\left({\mathcal{A}}+\frac{1}{2}\pi^2I\right)&=\sigma\left(\mathcal{A}_0+\frac{1}{2}\pi^2I\right)\\&=\left\{-\left(\pi{k}\right)^2+\frac{1}{2}\pi^2:k\in\mathbb{N}\right\}\\&=\left\{\frac{1}{2}\pi^2, -\frac{1}{2}\pi^2, -\frac{7}{2}\pi^2, -\frac{17}{2}\pi^2,...\right\},
\end{split}
\end{equation*}
and each enginvalue $\lambda_k=\left(\frac{1}{2}-k^2\right)\pi^2$ corresponding to the enginfunction
\begin{equation*}
	\psi_k\left(x\right)=\sin\left(\pi{kx}\right).
\end{equation*}
By Remark \ref{rem}, we could take $\alpha=\beta=\frac{1}{2}\pi^2-\varepsilon^*$ for any $\varepsilon^*\in\left(0,\frac{1}{2}\pi^2\right)$. Thus the spectrum of $\mathcal{A}_0+\frac{1}{2}\pi^2I$ could be split into two parts $\sigma^{0s}=\{\left(\frac{1}{2}-k^2\right)\pi^2:k=1,2,...\},\sigma^{0u}=\{\frac{1}{2}\pi^2\}$. So $\mathcal{A}_0+\frac{1}{2}\pi^2I$ satisfies the exponential dichotomy condition and the Assumption \ref{as2.5} is satisfied. The stable subspace $\mathcal{Y}_{0s}$ and unstable subspace $\mathcal{Y}_{0u}$ of $\mathcal{Y}_0$ are $\text{span}\left\{\sin\left(\pi{nx}\right),n=1,2,...\right\}$, and $\left\{0\right\}$, respectively. 

Therefore, the results obtained in Theorems \ref{thm3.2} and \ref{thm4.2} are applicable and thus justify the existence of mean-square random unstable manifolds and stable sets
for \eqref{eq8.1}.

\section{Concluding remarks}\label{sec5}
Under the suitable assumptions, we have established the existence of mean-square random unstable invariant manifolds and mean-square random stable invariant set of \eqref{eq1.1} that are given by graphs of Lipschitz maps. 

We conclude this paper by discussing several possible extensions of our result. The first extension is straightforward and can be obtained by simply rereading the paper carefully. The two other extensions are less obvious and merit further investigation.

\begin{itemize}
	\item [$\left(\mathrm{1}\right)$] One can consider the existence of center manifold of \eqref{eq1.1} if one additionally has a center subspace, namely if there exist eigenvalues of $A_0$ with real part greater than zero. Under the modified assumption of exponential dichotomy for $A_0$, one can obtain the existence of mean-square center-unstable invariant manifold and mean-square center-stable invariant set of $\eqref{eq1.1}$.
	
	\item [$\left(\mathrm{2}\right)$] We need to point out that the usual assumptions $F:L^2\left(\Omega,X_0\right)\to{L^2\left(\Omega,X\right)}$ and $\sigma:L^2\left(\Omega,X_0\right)\to{L^2\left(\Omega,\mathcal{L}_2\left(Y_0,X\right)\right)}$ are $C^k$ (for $k\geq1$) can not guarantee the smoothness of random invariant manifolds or invariant sets. The main difficulty that one has to face arises from the fact that the required gap condition contains some $C_\kappa$ like constant since Young's convolution inequality is not applicable. There is no conceptual obstruction to the use of the method of proof presented in this paper in that situation, but new estimates are required.
	
	\item [$\left(\mathrm{3}\right)$] One can weaken the hyperbolicity of invariant manifolds in the present paper. We refer to \cite{PJ2013,WL2021} for implementations of this idea in the framework of random dynamical systems. Herein,  Bates \emph{et al.} \cite{PJ2013} studied normally hyperbolic invariant manifolds for random dynamical systems and the persistence of normal hyperbolicity under random perturbations. Zhou and Zhang \cite{WL2021} weakened the results to the nonuniformly normally hyperbolic case. A natural question is whether the results of the present paper also apply to these weakened situations.
\end{itemize}

\section*{Acknowledgments}
Zeng is partially supported by the National Natural Science Foundation of China (No. 11871225), Guangdong Basic and Applied Basic Research Foundation (No. 2019A1515011350) and Guangzhou Basic and Applied Basic Research Foundation (No.
202 11911530750).
 Huang is partially supported by the National Natural Science Foundation of China (No. 11771449).

\bibliographystyle{elsarticle-num}
\bibliography{ref}






\end{document}